\begin{document}
\newcommand {\emptycomment}[1]{} 

\newcommand{\tabincell}[2]{\begin{tabular}{@{}#1@{}}#2\end{tabular}}

\newcommand{\nc}{\newcommand}
\newcommand{\delete}[1]{}

\nc{\mlabel}[1]{\label{#1}}  
\nc{\mcite}[1]{\cite{#1}}  
\nc{\mref}[1]{\ref{#1}}  
\nc{\meqref}[1]{Eq.~\eqref{#1}} 
\nc{\mbibitem}[1]{\bibitem{#1}} 

\delete{
\nc{\mlabel}[1]{\label{#1}  
{\hfill \hspace{1cm}{\bf{{\ }\hfill(#1)}}}}
\nc{\mcite}[1]{\cite{#1}{{\bf{{\ }(#1)}}}}  
\nc{\mref}[1]{\ref{#1}{{\bf{{\ }(#1)}}}}  
\nc{\meqref}[1]{Eq.~\eqref{#1}{{\bf{{\ }(#1)}}}} 
\nc{\mbibitem}[1]{\bibitem[\bf #1]{#1}} 
}

\newtheorem{thm}{Theorem}[section]
\newtheorem{lem}[thm]{Lemma}
\newtheorem{cor}[thm]{Corollary}
\newtheorem{pro}[thm]{Proposition}
\newtheorem{conj}[thm]{Conjecture}
\theoremstyle{definition}
\newtheorem{defi}[thm]{Definition}
\newtheorem{ex}[thm]{Example}
\newtheorem{rmk}[thm]{Remark}
\newtheorem{pdef}[thm]{Proposition-Definition}
\newtheorem{condition}[thm]{Condition}

\renewcommand{\labelenumi}{{\rm(\alph{enumi})}}
\renewcommand{\theenumi}{\alph{enumi}}
\renewcommand{\labelenumii}{{\rm(\roman{enumii})}}
\renewcommand{\theenumii}{\roman{enumii}}

\nc{\tred}[1]{\textcolor{red}{#1}}
\nc{\tblue}[1]{\textcolor{blue}{#1}}
\nc{\tgreen}[1]{\textcolor{green}{#1}}
\nc{\tpurple}[1]{\textcolor{purple}{#1}}
\nc{\btred}[1]{\textcolor{red}{\bf #1}}
\nc{\btblue}[1]{\textcolor{blue}{\bf #1}}
\nc{\btgreen}[1]{\textcolor{green}{\bf #1}}
\nc{\btpurple}[1]{\textcolor{purple}{\bf #1}}


\newcommand{\End}{\text{End}}

\nc{\calb}{\mathcal{B}}
\nc{\call}{\mathcal{L}}
\nc{\calo}{\mathcal{O}}
\nc{\frakg}{\mathfrak{g}}
\nc{\frakh}{\mathfrak{h}}
\nc{\ad}{\mathrm{ad}}

\nc{\ccred}[1]{\tred{\textcircled{#1}}}


\newcommand{\cm}[1]{\textcolor{purple}{\underline{CM:}#1 }}
\newcommand{\gl}[1]{\textcolor{blue}{\underline{GL:}#1 }}
\newcommand{\red}[1]{\textcolor{red}{#1}}

\title[Quasi-triangular Leibniz bialgebras]
{Quasi-triangular, factorizable Leibniz bialgebras and relative Rota-Baxter operators}

\author{Chengming Bai}
\address{Chern Institute of Mathematics \& LPMC, Nankai University, Tianjin 300071, China}
\email{baicm@nankai.edu.cn}

\author{Guilai Liu}
\address{Chern Institute of Mathematics \& LPMC, Nankai University, Tianjin 300071, China}
\email{1120190007@mail.nankai.edu.cn}

\author{Yunhe Sheng}
\address{Department of Mathematics, Jilin University, Changchun 130012, Jilin, China}
\email{shengyh@jlu.edu.cn}

\author{Rong Tang}
\address{Department of Mathematics, Jilin University, Changchun 130012, Jilin, China}
\email{tangrong@jlu.edu.cn}


\begin{abstract}
We introduce the notion of quasi-triangular Leibniz bialgebras,
which {{can be}}  constructed from solutions of the classical
Leibniz Yang-Baxter equation (CLYBE) whose skew-symmetric parts
are invariant.
{{In addition to}} triangular Leibniz bialgebras, 
quasi-triangular Leibniz bialgebras contain factorizable Leibniz
bialgebras as another  subclass, which lead to a factorization of
the underlying Leibniz algebras. Relative Rota-Baxter operators
with weights on Leibniz algebras are used to  characterize
solutions of the CLYBE whose skew-symmetric parts are invariant.
{{On skew-symmetric quadratic Leibniz algebras, such operators
correspond to Rota-Baxter type operators. Consequently, we
introduce the notion of skew-symmetric quadratic Rota-Baxter
Leibniz algebras, such that they give rise to triangular Leibniz
bialgebras in the case of weight $0$, while they are in one-to-one
correspondence with factorizable Leibniz bialgebras in the case of
nonzero weights. }}
\end{abstract}

\subjclass[2020]{
    17A32, 
    17A36,  
    17B10, 
    17B38, 
    17B40, 
    17B60, 
    17B62. 
}

\keywords{Quasi-triangular Leibniz bialgebras, classical Leibniz Yang-Baxter equation, factorizable Leibniz bialgebras, (relative) Rota-Baxter operators}

\maketitle


\tableofcontents

\allowdisplaybreaks

\section{Introduction}

Leibniz algebras, first discovered by Bloh under the name of $D$-algebras \cite{Blo}, were rediscovered and studied in \cite{Lod2} as noncommutative analogues of Lie algebras.
\begin{defi}
    A \textbf{(left) Leibniz algebra} is a pair $(A,[-,-]_{A})$, where $A$ is a vector space and $[-,-]_{A}:A\otimes A\rightarrow A$ is a multiplication such that
    \begin{equation}\label{eq:defi:Leibniz}
        [x,[y,z]_{A}]_{A}=[[x,y]_{A},z]_{A}+[y,[x,z]_{A}]_{A},\;\;\forall x,y,z\in A.
    \end{equation}
\end{defi}
 As a generalization of Lie algebras, Leibniz algebras form an important class of non-associative algebras \cite{AKOZ,DMS,Gne}. The (co)homology and homotopy theories of Leibniz algebras were established in \cite{ammardefiLeibnizalgebra,Gao,Lodder,Lod2,Pir}. Recently,
Leibniz algebras were studied in different aspects due to their various applications in mathematics and physics, such as integration \cite{Bor,Cov,JP}, deformation quantization \cite{Dhe}, rational homotopy theory \cite{Livernet},  higher-order differential operators \cite{Akman} and higher gauge theories \cite{Kot, Str}. From the operadic viewpoint, the operad of Leibniz algebras is the duplicator of the operad of  Lie algebras \cite{Pei}.

\subsection{The previous study on Leibniz bialgebras}\

A bialgebra structure is a vector space equipped with an algebra structure and a coalgebra structure satisfying some compatible conditions.
Some well-known bialgebra structures are Lie bialgebras \cite{Cha,Dri}  and  antisymmetric  infinitesimal bialgebras
\cite{Agu2000, Agu2001, Agu2004, Bai2010} as a subclass of infinitesimal bialgebras. These bialgebras
have a common property, that is, they have equivalent characterizations in terms of Manin triples which correspond to  nondegenerate  bilinear forms on the algebra structures satisfying certain conditions. In addition, it is noteworthy that one can  study the bialgebra theories in the sense that they give a distributive law
 in \cite{LMW,Lod3}.

\delete{
Due to the close relationship between Leibniz algebras and Lie algebras,
the bialgebra theory for Leibniz algebras was considered respectively in the assumptions that the co-multiplication is a 1-cocycle in \cite{Rez} and that the invariant condition of  Manin triples of Leibniz algebras takes the same form as that of Lie algebras
in \cite{Bar}. For the latter case, the Leibniz algebras with such nondegenerate bilinear forms are restricted to symmetric Leibniz algebras which are both left and right Leibniz algebras, and consequently the study in \cite{Bar} mainly concentrated on symmetric Leibniz bialgebras.}

In
\cite{Chap}, Chapton used the operad theory to determine the proper invariant condition of bilinear forms on Leibniz algebras.
He revealed an impressive difference between Lie algebras and Leibniz algebras: the operad of Lie algebras is cyclic, whereas the operad of Leibniz algebras is anticyclic. Consequently, the invariant bilinear forms on Leibniz algebras should be skew-symmetric.


\begin{defi}\cite{Chap}
A \textbf{skew-symmetric quadratic Leibniz algebra} is a triple
$(A$,
$[-,-]_{A}$,
$\omega)$,
where $(A,[-,-]_{A})$ is a Leibniz algebra and $\omega$ is a
nondegenerate skew-symmetric bilinear form on $A$ which is invariant in the sense that
\begin{equation}\label{eq:inv bilinear form}
    \omega(x,[y,z]_{A})=\omega([x,z]_{A}+[z,x]_{A},y),\;\;\forall x,y,z\in A.
\end{equation}
\end{defi}
With such  bilinear forms, a notion of Leibniz bialgebras was introduced in \cite{ST}, and it was shown that Leibniz bialgebras and  Manin triples of Leibniz algebras  are equivalent structures. See \cite{ABBBS,Barreiro-Benayadi,RSH} for other approaches to the study of Leibniz bialgebras and applications. In particular, \cite{ST} used the  twisting theories and relative Rota-Baxter operators to  study triangular Leibniz bialgebras. However, quasi-triangular Leibniz bialgebras are yet to be studied in \cite{ST}.

\subsection{Our {{further}} study on Leibniz bialgebras}\

In this paper, we reconsider the theory of Leibniz bialgebras, especially quasi-triangular Leibniz bialgebras and the
classical Leibniz Yang-Baxter equation (CLYBE).
We introduce an invariant condition for a 2-tensor in a Leibniz
algebra. By introducing a {{suitable}} co-multiplication, we
directly show that a solution of the CLYBE whose skew-symmetric
part is invariant gives rise to a Leibniz bialgebra that we call
quasi-triangular. {{Comparing to \cite{ST} which focuses on
triangular Leibniz bialgebras, our new approach has the advantage
that the solutions of the CLYBE giving rise to Leibniz bialgebras
are no longer limited to be symmetric.} \delete{ Such a method is
different from \cite{ST} and indicates a priority since it
generalizes the result obtained therein that a symmetric solution
of the CLYBE  gives  a triangular Leibniz bialgebra.}

{{Other than}} triangular Leibniz bialgebras, there is another
subclass of quasi-triangular Leibniz bialgebras, namely
factorizable Leibniz bialgebras.
Factorizable Leibniz bialgebras give rise to a natural factorization of the underlying Leibniz algebras. 
The importance of factorizable Leibniz bialgebras in the study of Leibniz bialgebras can also be observed from the fact that the double space of an arbitrary Leibniz bialgebra admits a factorizable Leibniz bialgebra structure.


Furthermore, we study solutions of the CLYBE in Leibniz algebras
in terms of operator forms. Solutions of the CLYBE whose
skew-symmetric parts are invariant can be characterized as
relative Rota-Baxter operators with weights on Leibniz algebras
\cite{Das}. {{  Then we introduce the notion of skew-symmetric
quadratic Rota-Baxter
        Leibniz algebras, which have close relationships with triangular and factorizable Leibniz bialgebras. Refer to Corollary \ref{pro:triangular Leib}, Theorem \ref{thm:quadratic to fact} and Theorem \ref{thm:fact to quadratic} for more detail.
}}
\delete{
In particular, a skew-symmetric quadratic Rota-Baxter
Leibniz algebra of weight 0 gives rise to a triangular Leibniz
bialgebra, and there is a one-to-one correspondence between skew-symmetric
quadratic Rota-Baxter Leibniz algebras of nonzero weights and
factorizable Leibniz bialgebras.
}

This paper is organized as follows.
In Section \ref{sec2}, we show that a solution of the CLYBE in a Leibniz algebra whose skew-symmetric part is invariant  gives rise to a quasi-triangular Leibniz bialgebra.
In Section \ref{sec3},
we study factorizable Leibniz bialgebras which compose a typical subclass of quasi-triangular Leibniz bialgebras.
In Section \ref{sec4}, we study relative Rota-Baxter operators with weights on Leibniz algebras, which serve as operator forms of solutions of the CLYBE whose skew-symmetric parts are invariant.

Throughout this paper,  unless otherwise specified, all the vector
spaces and algebras are finite-dimensional over an algebraically
closed field $\mathbb {K}$ of characteristic zero, although many
results and notions remain valid in the infinite-dimensional case.

\delete{
\subsection{Outline of the paper}\

This paper is organized as follows.

In Section \ref{sec2}, we introduce the  invariant condition for 2-tensors in Leibniz algebras, and show that a solution of the CLYBE in a Leibniz algebra whose skew-symmetric part is invariant  gives rise to a quasi-triangular Leibniz bialgebra.

In Section \ref{sec3},
we introduce the notion of factorizable Leibniz bialgebras, which serve as another typical subclass of quasi-triangular Leibniz bialgebras besides \cm{the} triangular ones. Factorizable Leibniz bialgebras lead to a factorization of the underlying Leibniz algebras, and exist naturally on the double spaces of Leibniz bialgebras.

In Section \ref{sec4}, we study relative Rota-Baxter operators of Leibniz algebras with weights, which serve as operator forms of solutions of the CLYBE whose skew-symmetric parts are invariant.
Such operators on skew-symmetric quadratic Leibniz algebras translate into Rota-Baxter type operators, and they coincide with Rota-Baxter operators on Leibniz algebras in particular cases, which lead to the notion of skew-symmetric quadratic Rota-Baxter Leibniz algebras.
We show that a skew-symmetric quadratic Rota-Baxter Leibniz algebra of weight 0 gives rise to a triangular Leibniz bialgebra, and skew-symmetric quadratic Rota-Baxter Leibniz algebras of nonzero weights are in one-to-one correspondence with factorizable Leibniz bialgebras.

Throughout this paper,  unless otherwise specified, all the vector
spaces and algebras are finite-dimensional over an algebraically
closed field $\mathbb {K}$ of characteristic zero, although many
results and notions remain valid in the infinite-dimensional case.}

\section{Quasi-triangular Leibniz bialgebras and the classical Leibniz Yang-Baxter equation}\label{sec2}

In this section, we introduce the  invariant condition for
2-tensors in Leibniz algebras. Then we introduce a {{suitable}}
co-multiplication on a Leibniz algebra, and show that {{such a
co-multiplication determined by}} a solution of the classical
Leibniz Yang-Baxter equation whose skew-symmetric part is
invariant gives rise to a Leibniz bialgebra  that we call
quasi-triangular.

\begin{defi}
    Let $(A,[-,-]_{A})$ be a Leibniz algebra  and $r=\sum\limits_{i} a_{i}\otimes b_{i}\in A\otimes A$.
    Set
    \begin{equation} \label{eq:admissible}
        [[r,r]]:=[r_{12},r_{13}]-([r_{12},r_{23}]+[r_{23},r_{12}])+[r_{23},r_{13}],
    \end{equation}
where
    \begin{eqnarray*}
        [r_{12},r_{13}]=\sum_{i,j}[a_{i}, a_{j}]_{A}\otimes b_{i}\otimes b_{j},\;
        [r_{12}, r_{23}]=\sum_{i,j}a_{i}\otimes [b_{i},a_{j}]_{A}\otimes b_{j},\;
    \end{eqnarray*}
\begin{eqnarray*}
        [r_{23},r_{12}]=\sum_{i,j}a_{i}\otimes [a_{j},b_{i}]_{A}\otimes b_{j},\;
        [r_{23},r_{13}]=\sum_{i,j}a_{i}\otimes a_{j}\otimes [b_{j},b_{i}]_{A}.
    \end{eqnarray*}
Then $r$ is called a solution of the \textbf{classical Leibniz Yang-Baxter equation} or the \textbf{CLYBE} in short if $[[r,r]]=0$.
\end{defi}

\begin{rmk}
When $r$ is symmetric, we recover the form of the CLYBE which was given in \cite{ST}.
\end{rmk}

For a vector space $V$, let $\sigma\in\mathrm{End}(V\otimes V)$ and $\sigma_{13}\in\mathrm{End}(V\otimes V\otimes V)$ be linear maps defined  as
\begin{equation*}
\sigma(x\otimes y)=y\otimes x,\; \sigma_{13}(x\otimes y\otimes z)=z\otimes y\otimes x,\;\forall x,y,z\in V.
\end{equation*}

\begin{pro}\label{pro:equation equivalence}
    Let $(A,[-,-]_{A})$ be a Leibniz algebra  and $r=\sum\limits_{i}a_{i}\otimes b_{i}\in A\otimes A$ be a $2$-tensor.
    Then we have
    \begin{equation*}
        [[\sigma(r),\sigma(r)]]= \sigma_{13}[[r,r]].
    \end{equation*}
Consequently, $r$ is a solution of the CLYBE in $(A,[-,-]_{A})$ if and only if
    $\sigma(r)$ is a solution of the CLYBE in $(A,[-,-]_{A})$.
    \end{pro}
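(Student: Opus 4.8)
# Proof Proposal

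The plan is to verify the identity $[[\sigma(r),\sigma(r))]] = \sigma_{13}[[r,r]]$ by tracking what the operator $\sigma$ does to each of the four terms in the definition \eqref{eq:admissible} and then matching them up, componentwise, with the four terms obtained by applying $\sigma_{13}$ to $[[r,r]]$. Write $r = \sum_i a_i\otimes b_i$, so that $\sigma(r) = \sum_i b_i\otimes a_i$; renaming the summation index, $\sigma(r)$ has ``$a$-part'' $b_i$ and ``$b$-part'' $a_i$. Substituting this directly into the formulas for $[r_{12},r_{13}]$, $[r_{12},r_{23}]$, $[r_{23},r_{12}]$, $[r_{23},r_{13}]$ yields, after relabelling,
\begin{align*}
[\sigma(r)_{12},\sigma(r)_{13}] &= \sum_{i,j}[b_i,b_j]_A\otimes a_i\otimes a_j, \\
[\sigma(r)_{12},\sigma(r)_{23}] &= \sum_{i,j} b_i\otimes [a_i,b_j]_A\otimes a_j, \\
[\sigma(r)_{23},\sigma(r)_{12}] &= \sum_{i,j} b_i\otimes [b_j,a_i]_A\otimes a_j, \\
[\sigma(r)_{23},\sigma(r)_{13}] &= \sum_{i,j} b_i\otimes b_j\otimes [a_j,a_i]_A.
\end{align*}

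Next I would apply $\sigma_{13}$ (which swaps the first and third tensor legs) to each of the four terms of $[[r,r]]$. This gives
\begin{align*}
\sigma_{13}[r_{12},r_{13}] &= \sum_{i,j} b_j\otimes b_i\otimes [a_i,a_j]_A, \\
\sigma_{13}[r_{12},r_{23}] &= \sum_{i,j} b_j\otimes [b_i,a_j]_A\otimes a_i, \\
\sigma_{13}[r_{23},r_{12}] &= \sum_{i,j} b_j\otimes [a_j,b_i]_A\otimes a_i, \\
\sigma_{13}[r_{23},r_{13}] &= \sum_{i,j} [b_j,b_i]_A\otimes a_j\otimes a_i.
\end{align*}
Now the matching is a bookkeeping exercise: swapping the dummy indices $i\leftrightarrow j$ in the lists shows that $[\sigma(r)_{12},\sigma(r)_{13}] = \sigma_{13}[r_{23},r_{13}]$, that $[\sigma(r)_{23},\sigma(r)_{13}] = \sigma_{13}[r_{12},r_{13}]$, that $[\sigma(r)_{12},\sigma(r)_{23}] = \sigma_{13}[r_{23},r_{12}]$, and that $[\sigma(r)_{23},\sigma(r)_{12}] = \sigma_{13}[r_{12},r_{23}]$. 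Comparing with the signs in \eqref{eq:admissible} — note that the outer two terms appear with sign $+$ and the middle pair $[r_{12},r_{23}]+[r_{23},r_{12}]$ with sign $-$, and that $\sigma_{13}$ swaps the roles of the two outer terms and swaps the two middle terms with each other — the four correspondences assemble precisely into $[[\sigma(r),\sigma(r)]] = \sigma_{13}[[r,r]]$.

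The consequence is then immediate: since $\sigma_{13}$ is a linear isomorphism of $A\otimes A\otimes A$, we have $[[\sigma(r),\sigma(r)]] = 0$ if and only if $\sigma_{13}[[r,r]] = 0$ if and only if $[[r,r]] = 0$; that is, $r$ solves the CLYBE precisely when $\sigma(r)$ does. I do not anticipate a genuine obstacle here — the argument is entirely a symmetry computation — but the one place to be careful is the index relabelling when matching each $\sigma_{13}$-image against the corresponding $\sigma(r)$-term, since an error there would flip a sign or permute a leg incorrectly; keeping the $a$- and $b$-parts of $\sigma(r)$ rigorously straight (the $a$-slot of $\sigma(r)$ is populated by the $b_i$'s of $r$) is the crux of getting the bookkeeping right.
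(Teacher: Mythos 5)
Your proposal is correct and follows essentially the same route as the paper: expand the four constituent terms of $[[\sigma(r),\sigma(r)]]$, apply $\sigma_{13}$ to the four terms of $[[r,r]]$, and match them up after relabelling the dummy indices (the paper simply writes the expansion in one line and asserts the identity). The term-by-term matching and the sign bookkeeping in your write-up are accurate, and the final deduction from the invertibility of $\sigma_{13}$ is exactly the paper's.
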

\begin{proof}
We observe that
\begin{eqnarray*}
[[\sigma(r),\sigma(r)]]&=&\sum_{i,j} [b_{i},b_{j}]_{A}\otimes a_{i}\otimes a_{j}-b_{i}\otimes([b_{j},a_{i}]_{A}+[a_{i},b_{j}]_{A})\otimes a_{j}+b_{i}\otimes b_{j}\otimes[a_{j},a_{i}]_{A}\\
&=&\sigma_{13}[[r,r]].
\end{eqnarray*}
Hence the conclusion follows.
\end{proof}

Let $(A,[-,-]_{A})$ be a Leibniz algebra. Let $\mathcal{L}_{A},\mathcal{R}_{A}:A\rightarrow\mathrm{End}(A)$ be linear maps defined as
$$\mathcal{L}_{A}(x)y=[x,y]_{A}=\mathcal{R}_{A}(y)x,\;\;\forall x,y\in A.$$

We recall the notions of Leibniz coalgebras and Leibniz bialgebras.

\begin{defi}\cite{ST}
A \textbf{Leibniz coalgebra} is a pair $(A,\Delta)$, where $A$ is a vector space and $\Delta:A\rightarrow A\otimes A$ is a co-multiplication such that
\begin{equation}\label{eq:defi:Leibniz co}
    (\Delta\otimes\mathrm{id})\Delta(x)+(\sigma\otimes\mathrm{id})(\mathrm{id}\otimes\Delta)\Delta(x)-(\mathrm{id}\otimes\Delta)\Delta(x)=0,\;\;\forall x\in A.
\end{equation}
A \textbf{Leibniz bialgebra} is a triple $(A,[-,-]_{A},\Delta)$, where $(A,[-,-]_{A})$ is a Leibniz algebra, $(A,\Delta)$ is a Leibniz coalgebra and the following equations hold:
\begin{small}
    \begin{equation}\label{eq:defi:Leibniz bialg1}
        \sigma(\mathcal{R}_{A}(y)\otimes\mathrm{id})\Delta(x)=(\mathcal{R}_{A}(x)\otimes\mathrm{id})\Delta(y),
    \end{equation}
    \begin{equation}\label{eq:defi:Leibniz bialg2}
        \Delta([x,y]_{A})=(\mathrm{id}\otimes\mathcal{R}_{A}(y)-(\mathcal{L}_{A}+\mathcal{R}_{A})(y)\otimes\mathrm{id})(\mathrm{id}+\sigma)\Delta(x)+(\mathrm{id}\otimes\mathcal{L}_{A}(x)+\mathcal{L}_{A}(x)\otimes\mathrm{id})\Delta(y),
    \end{equation}
\end{small}
for all $x,y\in A$.
\end{defi}

\delete{
\begin{defi}\cite{ST}
\begin{enumerate}
\item
Let $A$ be a vector space with a co-multiplication $\Delta:A\rightarrow A\otimes A$. $(A,\Delta)$ is called a \textbf{Leibniz coalgebra} if
        \begin{equation}\label{eq:defi:Leibniz co}
            (\Delta\otimes\mathrm{id})\Delta(x)+(\sigma\otimes\mathrm{id})(\mathrm{id}\otimes\Delta)\Delta(x)-(\mathrm{id}\otimes\Delta)\Delta(x)=0,\;\;\forall x\in A.
        \end{equation}
\item Let $(A,[-,-]_{A})$ be a Leibniz algebra and  $(A,\Delta)$ be a Leibniz coalgebra. Suppose that the following equations hold:
        \begin{small}
            \begin{equation}\label{eq:defi:Leibniz bialg1}
                \sigma(\mathcal{R}_{A}(y)\otimes\mathrm{id})\Delta(x)=(\mathcal{R}_{A}(x)\otimes\mathrm{id})\Delta(y),
            \end{equation}
            \begin{equation}\label{eq:defi:Leibniz bialg2}
                \Delta([x,y]_{A})=(\mathrm{id}\otimes\mathcal{R}_{A}(y)-(\mathcal{L}_{A}+\mathcal{R}_{A})(y)\otimes\mathrm{id})(\mathrm{id}+\sigma)\Delta(x)+(\mathrm{id}\otimes\mathcal{L}_{A}(x)+\mathcal{L}_{A}(x)\otimes\mathrm{id})\Delta(y),
            \end{equation}
        \end{small}
        for all $x,y\in A$. Such a structure is called a \textbf{Leibniz bialgebra} and is denoted by  $(A,[-,-]_{A},\Delta)$.
\end{enumerate}
    \end{defi}}

\begin{rmk}
    In fact, $(A,\Delta)$ is a Leibniz coalgebra if and only if $(A^{*},[-,-]_{A^{*}})$ is a Leibniz algebra, where  $[-,-]_{A^{*}}:A^{*}\otimes A^{*}\rightarrow A^{*}$ is the linear dual of $\Delta$, that is,
    \begin{equation}\label{linear dual}
        \langle \Delta(x),a^{*}\otimes b^{*}\rangle=\langle x,[a^{*},b^{*}]_{A^{*}}\rangle,\;\;\forall x\in A, a^{*},b^{*}\in A^{*}.
    \end{equation}
Here $\langle-,-\rangle$ is the ordinary pair between $A$ and $A^{*}$. Hence a Leibniz bialgebra $(A,[-,-]_{A}$,
$\Delta)$ is sometimes denoted by
 $((A,[-,-]_{A}),(A^{*},[-,-]_{A^{*}}))$, where the Leibniz algebra structure $(A^{*},[-,-]_{A^{*}})$ on the dual space $A^{*}$ corresponds to the Leibniz coalgebra $(A,\Delta)$ via Eq.~\eqref{linear dual}.
\end{rmk}

\begin{defi}
Let $(A,[-,-]_{A})$ be a Leibniz algebra.
Define a linear map
     $F:A\rightarrow\mathrm{End}(A\otimes A)$ by
    \begin{equation}\label{eq:F}
        F(x):=(\mathcal{L}_{A}+\mathcal{R}_{A})(x)\otimes\mathrm{id}-\mathrm{id}\otimes\mathcal{R}_{A}(x),\;\;\forall x\in A.
    \end{equation}
An element $r\in A\otimes A$ is called {\bf invariant} if $F(x)r=0$ for all $x\in A$.
\end{defi}

\begin{pro}\label{pro:coalg}
    Let $(A,[-,-]_{A})$ be a Leibniz algebra  and $r=\sum\limits_{i}a_{i}\otimes b_{i}\in A\otimes A$.
   Define a co-multiplication $\Delta=\Delta_{r}:A\rightarrow A\otimes A$ by
    \begin{equation}\label{eq:cob}
        \Delta_{r}(x)=F(x)r,\;\;\forall x\in A.
    \end{equation}
     Then  $(A,\Delta_{r})$ is a Leibniz coalgebra if and only if the following equation holds:
    \begin{small}
    \begin{equation}\label{eq:coalg cond}
    \begin{split}
        &\Big(\mathrm{id}\otimes\mathrm{id}\otimes\mathcal{R}_{A}(x)-\mathrm{id}\otimes(\mathcal{L}_{A}+\mathcal{R}_{A})(x)\otimes\mathrm{id}\Big)\Big( (\sigma\otimes\mathrm{id}) [[r,r]]-\sum_{j}F(a_{j})(r-\sigma(r))\otimes b_{j}\Big)\\
        &+\Big((\mathcal{L}_{A}+\mathcal{R}_{A})(x)\otimes\mathrm{id}\otimes\mathrm{id}\Big)[[r,r]]\\
        &+\sum_{j}\Big((\mathcal{L}_{A}+\mathcal{R}_{A})(a_{j})\otimes\mathrm{id}\otimes\mathrm{id}\Big)\Big(\sigma\big(F(x)(r-\sigma(r))\big)\otimes b_{j}\Big)=0,~~\forall x\in A.
    \end{split}
    \end{equation}
\end{small}
\end{pro}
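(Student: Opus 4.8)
The plan is to verify the equivalence by directly expanding the Leibniz coalgebra axiom \eqref{eq:defi:Leibniz co} for $\Delta_r$. Writing $r=\sum_i a_i\otimes b_i$, Equations \eqref{eq:F} and \eqref{eq:cob} give
\begin{equation*}
\Delta_r(x)=\sum_i\bigl([x,a_i]_A+[a_i,x]_A\bigr)\otimes b_i-\sum_i a_i\otimes[b_i,x]_A,\qquad\forall x\in A,
\end{equation*}
so that $\Delta_r(x)$ is a sum of rank-one tensors in which $x$ acts, via $\mathcal{L}_A+\mathcal{R}_A$, on the first leg of a copy of $r$, or, via $\mathcal{R}_A$, on its second leg. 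First I would substitute this formula into each of the three terms $(\Delta_r\otimes\mathrm{id})\Delta_r(x)$, $(\sigma\otimes\mathrm{id})(\mathrm{id}\otimes\Delta_r)\Delta_r(x)$ and $(\mathrm{id}\otimes\Delta_r)\Delta_r(x)$, writing the left-hand side of \eqref{eq:defi:Leibniz co} as a single double sum $\sum_{i,j}$ of tensors in $A^{\otimes 3}$ built from iterated brackets of $x,a_i,b_i,a_j,b_j$.

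Next I would normalize each summand so that the ``inner'' occurrence of the action of $x$ (the one introduced by the outer $\Delta_r$, i.e. by $F(x)$) acts on a single tensor leg. This is done by repeated use of the Leibniz identity \eqref{eq:defi:Leibniz} --- for instance $[x,[a_i,a_j]_A]_A=[[x,a_i]_A,a_j]_A+[a_i,[x,a_j]_A]_A$ --- and each such application splits a summand into an ``expected'' term, in which $x$ acts on one leg while the two copies of $r$ bracket among themselves, plus a ``leftover'' term in which $x$ acts on the other leg of the same copy of $r$. The expected terms are precisely those assembling into the four brackets $[r_{12},r_{13}]$, $[r_{12},r_{23}]$, $[r_{23},r_{12}]$, $[r_{23},r_{13}]$ of \eqref{eq:admissible}: sorting them by which leg $x$ hits (and carrying along the transposition $\sigma$ from the second term of \eqref{eq:defi:Leibniz co}, which produces $(\sigma\otimes\mathrm{id})[[r,r]]$ and, after an index relabelling of the kind used in Proposition \ref{pro:equation equivalence}, the other permuted copy of $[[r,r]]$), one recovers exactly the operators $(\mathcal{L}_A+\mathcal{R}_A)(x)\otimes\mathrm{id}\otimes\mathrm{id}$, $\mathrm{id}\otimes(\mathcal{L}_A+\mathcal{R}_A)(x)\otimes\mathrm{id}$, $\mathrm{id}\otimes\mathrm{id}\otimes\mathcal{R}_A(x)$ applied to $[[r,r]]$ or $(\sigma\otimes\mathrm{id})[[r,r]]$ that make up the first two lines of \eqref{eq:coalg cond}. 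The leftover terms, which encode the asymmetry between the two legs $a_i,b_i$ of $r$, combine into exactly the $r-\sigma(r)$ corrections in \eqref{eq:coalg cond}: the combination $r-\sigma(r)$ emerges because only the difference of the summands $[r_{12},r_{23}]$ and $[r_{23},r_{12}]$ fails to be matched by the expected terms. Putting all pieces together shows that the left-hand side of \eqref{eq:defi:Leibniz co} for $\Delta_r$ coincides with the left-hand side of \eqref{eq:coalg cond}, whence the stated equivalence.

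I expect the main obstacle to be purely the bookkeeping. Each of the three terms of \eqref{eq:defi:Leibniz co} expands into nine doubly-indexed summands, several rounds of the Leibniz identity are needed, and the delicate point is to confirm that the leftover terms recombine into the $F(\cdot)(r-\sigma(r))$ corrections rather than into further copies of $[[r,r]]$; one must in particular be careful with the index relabellings (of the type in Proposition \ref{pro:equation equivalence}) when bringing the permuted versions of $[[r,r]]$ to a common form. An alternative route is to dualize: recalling that $(A,\Delta_r)$ is a Leibniz coalgebra if and only if $(A^*,[-,-]_{A^*})$ is a Leibniz algebra (the remark after the definition of a Leibniz bialgebra, via \eqref{linear dual}), one could instead verify the Leibniz identity \eqref{eq:defi:Leibniz} for the bracket on $A^*$ dual to $\Delta_r$, which is expressible through the maps $a^*\mapsto\sum_i\langle a_i,a^*\rangle b_i$ and $a^*\mapsto\sum_i\langle b_i,a^*\rangle a_i$ together with coadjoint-type actions of $A$ on $A^*$; this carries essentially the same combinatorial content.
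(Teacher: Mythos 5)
Your proposal is correct and follows essentially the same route as the paper's own proof: a direct expansion of the coalgebra axiom \eqref{eq:defi:Leibniz co} for $\Delta_r$ into a double sum of $27$ tensors, repeated use of the Leibniz identity \eqref{eq:defi:Leibniz} (and the derived identity $[[x,y]_A,z]_A+[[y,x]_A,z]_A=0$) to regroup them, and the observation that the resulting expression is literally the left-hand side of \eqref{eq:coalg cond}, with the $[[r,r]]$-blocks and the $F(\cdot)(r-\sigma(r))$-corrections arising exactly as you describe. The paper carries out the bookkeeping by sorting the summands into nine groups $A(1),\dots,A(9)$, but the underlying idea is identical to yours.
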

\begin{proof}
Since $(A,[-,-]_{A})$ is a Leibniz algebra, there is an obvious identity:
    \begin{equation}\label{eq:defi:Leibniz2}
        [[x,y]_{A},z]_{A}+[[y,x]_{A},z]_{A}=0,\;\;\forall x,y,z\in A.
    \end{equation}
For all $x\in A$, we have
\begin{eqnarray*}
    &&(\Delta_{r}\otimes\mathrm{id})\Delta_{r}(x)-(\mathrm{id}\otimes \Delta_{r})\Delta_{r}(x)+(\sigma\otimes\mathrm{id})(\mathrm{id}\otimes \Delta_{r})\Delta_{r}(x)\\
    &&=\sum_{i,j}(a_{j}\otimes[b_{j},a_{i}]_{A}\otimes [b_{i},x]_{A}-[a_{i},a_{j}]_{A}\otimes b_{j}\otimes [b_{i},x]_{A}-[a_{j},a_{i}]_{A}\otimes b_{j}\otimes [b_{i},x]_{A}\\
    &&\ \ -a_{j}\otimes [b_{j},[x,a_{i}]_{A}]_{A}\otimes b_{i}+[[x,a_{i}]_{A},a_{j}]_{A}\otimes b_{j}\otimes b_{i}
    +[a_{j},[x,a_{i}]_{A}]_{A}\otimes b_{j}\otimes b_{i}\\
    &&\ \ -a_{j}\otimes [b_{j},[a_{i},x]_{A}]_{A}\otimes b_{i}+[[a_{i},x]_{A},a_{j}]_{A}\otimes b_{j}\otimes b_{i}
    +[a_{j},[a_{i},x]_{A}]_{A}\otimes b_{j}\otimes b_{i}\\
    &&\ \ -a_{i}\otimes a_{j}\otimes[b_{j},[b_{i},x]_{A}]_{A}+a_{i}\otimes[[b_{i},x]_{A},a_{j}]_{A}\otimes b_{j}+a_{i}\otimes[a_{j},[b_{i},x]_{A}]_{A}\otimes b_{j}\\
    &&\ \ +[x,a_{i}]_{A}\otimes a_{j}\otimes [b_{j},b_{i}]_{A}-[x,a_{i}]_{A}\otimes[b_{i},a_{j}]_{A}\otimes b_{j}-[x,a_{i}]_{A}\otimes[a_{j},b_{i}]_{A}\otimes b_{j}\\
    &&\ \ +[a_{i},x]_{A}\otimes a_{j}\otimes [b_{j},b_{i}]_{A}-[a_{i},x]_{A}\otimes[b_{i},a_{j}]_{A}\otimes b_{j}-[a_{i},x]_{A}\otimes[a_{j},b_{i}]_{A}\otimes b_{j}\\
    &&\ \ +a_{j}\otimes a_{i}\otimes[b_{j},[b_{i},x]_{A}]_{A}-[[b_{i},x]_{A},a_{j}]_{A}\otimes a_{i}\otimes b_{j}-[a_{j},[b_{i},x]_{A}]_{A}\otimes a_{i}\otimes b_{j}\\
    &&\ \ -a_{j}\otimes[x,a_{i}]_{A}\otimes[b_{j},b_{i}]_{A}+[b_{i},a_{j}]_{A}\otimes[x,a_{i}]_{A}\otimes b_{j}+[a_{j},b_{i}]_{A}\otimes[x,a_{i}]_{A}\otimes b_{j}\\
    &&\ \ -a_{j}\otimes[a_{i},x]_{A}\otimes[b_{j},b_{i}]_{A}+[b_{i},a_{j}]_{A}\otimes[a_{i},x]_{A}\otimes b_{j}+[a_{j},b_{i}]_{A}\otimes[a_{i},x]_{A}\otimes b_{j})\\
    &&=\sum_{k=1}^{9}A(k),
\end{eqnarray*}
where
\begin{small}
\begin{eqnarray*}
    A(1)&=&\sum_{i,j}(a_{j}\otimes[b_{j},a_{i}]_{A}\otimes [b_{i},x]_{A}-[a_{i},a_{j}]_{A}\otimes b_{j}\otimes [b_{i},x]_{A}-[a_{j},a_{i}]_{A}\otimes b_{j}\otimes [b_{i},x]_{A}\\
    &&-a_{i}\otimes a_{j}\otimes[b_{j},[b_{i},x]_{A}]_{A}+a_{j}\otimes a_{i}\otimes[b_{j},[b_{i},x]_{A}]_{A})\\
    &\overset{\eqref{eq:defi:Leibniz}}{=}&\sum_{i,j}(a_{j}\otimes[b_{j},a_{i}]_{A}\otimes [b_{i},x]_{A}-[a_{i},a_{j}]_{A}\otimes b_{j}\otimes [b_{i},x]_{A}-[a_{j},a_{i}]_{A}\otimes b_{j}\otimes [b_{i},x]_{A}\\
    &&+a_{i}\otimes a_{j}\otimes[[b_{i},b_{j}]_{A},x]_{A})\\
    &=&(\mathrm{id}\otimes\mathrm{id}\otimes\mathcal{R}_{A}(x))
    \sum_{i,j}( a_{j}\otimes[b_{j},a_{i}]_{A}\otimes b_{i} -[a_{i},a_{j}]_{A}\otimes b_{j}\otimes  b_{i} \\
    &&
    -[a_{j},a_{i}]_{A}\otimes b_{j}\otimes b_{i}  +a_{i}\otimes a_{j}\otimes [b_{i},b_{j}]_{A} )\\
    &=&(\mathrm{id}\otimes\mathrm{id}\otimes\mathcal{R}_{A}(x))( (\sigma\otimes\mathrm{id})[[r,r]]-\sum_{j}F(a_{j})(r-\sigma(r))\otimes b_{j}  ),\\
    A(2)&=&\sum_{i,j}([[x,a_{i}]_{A},a_{j}]_{A}\otimes b_{j}\otimes b_{i}+[[a_{i},x]_{A},a_{j}]_{A}\otimes b_{j}\otimes b_{i})\overset{\eqref{eq:defi:Leibniz2}}{=}0,\\
    A(3)&=&\sum_{i,j}(-[[b_{i},x]_{A},a_{j}]_{A}\otimes a_{i}\otimes b_{j}-[a_{j},[b_{i},x]_{A}]_{A}\otimes a_{i}\otimes b_{j}),\\
    A(4)&=&\sum_{i,j}([a_{j},[x,a_{i}]_{A}]_{A}\otimes b_{j}\otimes b_{i}+[x,a_{i}]_{A}\otimes a_{j}\otimes [b_{j},b_{i}]_{A}-[x,a_{i}]_{A}\otimes[b_{i},a_{j}]_{A}\otimes b_{j}\\
    &&-[x,a_{i}]_{A}\otimes[a_{j},b_{i}]_{A}\otimes b_{j})\\
    &=&( \mathcal{L}_{A}(x)\otimes\mathrm{id}\otimes\mathrm{id}) [[r,r]] +\sum_{i,j}(-[x,[a_{i},a_{j}]_{A}]_{A}\otimes b_{i}\otimes b_{j}+[a_{i},[x,a_{j}]_{A}]_{A}\otimes b_{i}\otimes b_{j})\\
    &\overset{\eqref{eq:defi:Leibniz}}{=}&( \mathcal{L}_{A}(x)\otimes\mathrm{id}\otimes\mathrm{id}) [[r,r]]+\sum_{i,j}[[a_{i},x]_{A},a_{j}]_{A}\otimes b_{i}\otimes b_{j},\\
    A(5)&=&\sum_{i,j}(-a_{j}\otimes[x,a_{i}]_{A}\otimes[b_{j},b_{i}]_{A}+[b_{i},a_{j}]_{A}\otimes[x,a_{i}]_{A}\otimes b_{j}+[a_{j},b_{i}]_{A}\otimes[x,a_{i}]_{A}\otimes b_{j})\\
    &=&-(\mathrm{id}\otimes\mathcal{L}_{A}(x)\otimes\mathrm{id})(\sigma\otimes\mathrm{id}) [[r,r]]+\sum_{i,j}b_{i}\otimes[x,[a_{i},a_{j}]_{A}]_{A}\otimes b_{j},
    \\
    A(6)&=&\sum_{i,j}(-a_{j}\otimes [b_{j},[x,a_{i}]_{A}]_{A}\otimes b_{i}+a_{i}\otimes[[b_{i},x]_{A},a_{j}]_{A}\otimes b_{j})\\
    &\overset{\eqref{eq:defi:Leibniz}}{=}&-\sum_{i,j}a_{i}\otimes[x,[b_{i},a_{j}]_{A}]_{A}\otimes b_{j},
\end{eqnarray*}
\end{small}
and similarly
\begin{small}
\begin{eqnarray*}
    A(7)&=&\sum_{i,j}([a_{j},[a_{i},x]_{A}]_{A}\otimes b_{j}\otimes b_{i}+[a_{i},x]_{A}\otimes a_{j}\otimes [b_{j},b_{i}]_{A}-[a_{i},x]_{A}\otimes[b_{i},a_{j}]_{A}\otimes b_{j}\\
    &&-[a_{i},x]_{A}\otimes[a_{j},b_{i}]_{A}\otimes b_{j})\\
    &=&(\mathcal{R}_{A}(x)\otimes\mathrm{id}\otimes\mathrm{id}) [[r,r]]+\sum_{i,j} [a_{j},[a_{i},x]_{A}]_{A}\otimes b_{i}\otimes b_{j},\\
    A(8)&=&\sum_{i,j}(-a_{j}\otimes[a_{i},x]_{A}\otimes[b_{j},b_{i}]_{A}+[b_{i},a_{j}]_{A}\otimes[a_{i},x]_{A}\otimes b_{j}+[a_{j},b_{i}]_{A}\otimes[a_{i},x]_{A}\otimes b_{j})\\
    &=&-(\mathrm{id}\otimes\mathcal{R}_{A}(x)\otimes\mathrm{id})(\sigma\otimes\mathrm{id})[[r,r]]+\sum_{i,j} b_{i}\otimes [[a_{i},a_{j}]_{A},x]_{A}\otimes b_{j} ,\\
    A(9)&=&\sum_{i,j}(-a_{j}\otimes [b_{j},[a_{i},x]_{A}]_{A}\otimes b_{i}+a_{i}\otimes[a_{j},[b_{i},x]_{A}]_{A}\otimes b_{j})=-\sum_{i,j}a_{i}\otimes[[b_{i},a_{j}]_{A},x]_{A}\otimes b_{j}.
\end{eqnarray*}
\end{small}
Note that
\begin{small}
\begin{eqnarray*}
    &&\sum_{i,j}([[a_{i},x]_{A},a_{j}]_{A}\otimes b_{i}\otimes b_{j}-[[b_{i},x]_{A},a_{j}]_{A}\otimes a_{i}\otimes b_{j})\\
    &&=\sum_{j}(\mathcal{R}_{A}(a_{j})\otimes\mathrm{id}\otimes\mathrm{id})(\sigma(F(x)(r-\sigma(r)))\otimes b_{j})+\sum_{i,j}(-[b_{i},a_{j}]_{A}\otimes[x,a_{i}]_{A}\otimes b_{j}\\
    &&\ \ -[b_{i},a_{j}]_{A}\otimes[a_{i},x]_{A}\otimes b_{j}+[a_{i},a_{j}]_{A}\otimes[x,b_{i}]_{A}\otimes b_{j}+[a_{i},a_{j}]_{A}\otimes[b_{i},x]_{A}\otimes b_{j}),\\
    &&\sum_{i,j}([a_{j},[a_{i},x]_{A}]_{A}\otimes b_{i}\otimes b_{j}-[a_{j},[b_{i},x]_{A}]_{A}\otimes a_{i}\otimes b_{j})\\
    &&=\sum_{j}(\mathcal{L}_{A}(a_{j})\otimes\mathrm{id}\otimes\mathrm{id})(\sigma(F(x)(r-\sigma(r)))\otimes b_{j})+\sum_{i,j}(-[a_{j},b_{i}]_{A}\otimes[x,a_{i}]_{A}\otimes b_{j}\\
    &&\ \ -[a_{j},b_{i}]_{A}\otimes[a_{i},x]_{A}\otimes b_{j}+[a_{j},a_{i}]_{A}\otimes[x,b_{i}]_{A}\otimes b_{j}+[a_{j},a_{i}]_{A}\otimes[b_{i},x]_{A}\otimes b_{j}).
\end{eqnarray*}
\end{small}
Then we have
\begin{small}
\begin{eqnarray*}
    &&\sum_{k=1}^{9}A(k)
    -(\mathrm{id}\otimes\mathrm{id}\otimes\mathcal{R}_{A}(x))((\sigma\otimes\mathrm{id})[[r,r]]-\sum_{j}F(a_{j})(r-\sigma(r))\otimes b_{j})
    \\
    &&-\sum_{j}((\mathcal{L}_{A}+\mathcal{R}_{A})(a_{j})\otimes\mathrm{id}\otimes\mathrm{id})(\sigma(F(x)(r-\sigma(r)))\otimes b_{j})\\
    &&
   +(\mathrm{id}\otimes(\mathcal{L}_{A}+\mathcal{R}_{A})(x)\otimes\mathrm{id})(\sigma\otimes\mathrm{id}) [[r,r]]-((\mathcal{L}_{A}+\mathcal{R}_{A})(x)\otimes\mathrm{id}\otimes\mathrm{id})[[r,r]]
    \\
    &&=\sum_{i,j}(-[b_{i},a_{j}]_{A}\otimes[x,a_{i}]_{A}\otimes b_{j}-[b_{i},a_{j}]_{A}\otimes[a_{i},x]_{A}\otimes b_{j}+[a_{i},a_{j}]_{A}\otimes[x,b_{i}]_{A}\otimes b_{j}\\
    &&\ \ +[a_{i},a_{j}]_{A}\otimes[b_{i},x]_{A}\otimes b_{j}-[a_{j},b_{i}]_{A}\otimes[x,a_{i}]_{A}\otimes b_{j}-[a_{j},b_{i}]_{A}\otimes[a_{i},x]_{A}\otimes b_{j}\\
    &&\ \ +[a_{j},a_{i}]_{A}\otimes[x,b_{i}]_{A}\otimes b_{j}+[a_{j},a_{i}]_{A}\otimes[b_{i},x]_{A}\otimes b_{j}-a_{i}\otimes[x,[b_{i},a_{j}]_{A}]_{A}\otimes b_{j}\\
    &&\ \ -a_{i}\otimes[[b_{i},a_{j}]_{A},x]_{A}\otimes b_{j}+b_{i}\otimes[x,[a_{i},a_{j}]_{A}]_{A}\otimes b_{j}+b_{i}\otimes[[a_{i},a_{j}]_{A},x]_{A}\otimes b_{j})\\
    &&=(\mathrm{id}\otimes(\mathcal{L}_{A}+\mathcal{R}_{A})(x)\otimes\mathrm{id})\sum_{j}F(a_{j})(r-\sigma(r))\otimes b_{j}.
\end{eqnarray*}
\end{small}
Hence the conclusion follows.
\end{proof}


\begin{pro}\label{pro:matched pair}
Let $(A,[-,-]_{A})$ be a Leibniz algebra  and $r=\sum\limits_{i}a_{i}\otimes b_{i}\in A\otimes A$. Define a co-multiplication $\Delta_{r}:A\rightarrow A\otimes A$ by Eq.~\eqref{eq:cob}.
\begin{enumerate}
    \item \label{it:1} Eq.~\eqref{eq:defi:Leibniz bialg1} holds if and only if
    \begin{equation}\label{mp1}
        (\mathcal{R}_{A}(y)\otimes\mathrm{id})F(x)(r-\sigma(r))=0,\;\;\forall x,y\in A.
    \end{equation}
        \item \label{it:2} Eq.~\eqref{eq:defi:Leibniz bialg2} holds if and only if
        \begin{small}
        \begin{equation}\label{mp2}
            \begin{split}
        &(\mathcal{L}_{A}(y)\otimes\mathrm{id})\sigma(F(x)(r-\sigma(r)))+(\mathcal{R}_{A}(y)\otimes\mathrm{id})(\sigma-\mathrm{id})(F(x)(r-\sigma(r)))\\
        &-\sigma(\mathcal{R}_{A}(y)\otimes\mathrm{id})F(x)(r-\sigma(r))-\sigma(\mathcal{R}_{A}(x)\otimes\mathrm{id})F(y)(r-\sigma(r))=0,\;\;\forall x,y\in A.
        \end{split}
        \end{equation}
    \end{small}
\end{enumerate}
\end{pro}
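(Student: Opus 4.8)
The plan is to substitute the coboundary $\Delta_r=F(\cdot)r$ directly into the two compatibility equations \eqref{eq:defi:Leibniz bialg1} and \eqref{eq:defi:Leibniz bialg2} and to simplify using only the left Leibniz identity \eqref{eq:defi:Leibniz} and the identity \eqref{eq:defi:Leibniz2} it entails. The recurring mechanism is that a factor of the form $(\mathcal{L}_A+\mathcal{R}_A)(x)a_i=[x,a_i]_A+[a_i,x]_A$ sitting in the first tensor slot is annihilated as soon as some $\mathcal{R}_A(z)$ acts on that slot, since $[[x,a_i]_A,z]_A+[[a_i,x]_A,z]_A=0$ by \eqref{eq:defi:Leibniz2}; this is precisely what makes the ``symmetric'' part of $r$ drop out and leaves conditions involving only $r-\sigma(r)$.

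For part (a), write $\Delta_r(x)=\sum_i\big(([x,a_i]_A+[a_i,x]_A)\otimes b_i-a_i\otimes[b_i,x]_A\big)$. Applying $\mathcal{R}_A(y)\otimes\mathrm{id}$ and using \eqref{eq:defi:Leibniz2} to kill the first group of terms gives $(\mathcal{R}_A(y)\otimes\mathrm{id})\Delta_r(x)=-\sum_i[a_i,y]_A\otimes[b_i,x]_A$, and likewise $(\mathcal{R}_A(x)\otimes\mathrm{id})\Delta_r(y)=-\sum_i[a_i,x]_A\otimes[b_i,y]_A$. Hence \eqref{eq:defi:Leibniz bialg1} becomes $\sum_i\big([b_i,x]_A\otimes[a_i,y]_A-[a_i,x]_A\otimes[b_i,y]_A\big)=0$. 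Independently, expanding $(\mathcal{R}_A(y)\otimes\mathrm{id})F(x)(r-\sigma(r))$ and again applying \eqref{eq:defi:Leibniz2} to the two groups of terms produced by $(\mathcal{L}_A+\mathcal{R}_A)(x)$ yields $\sum_i\big([b_i,y]_A\otimes[a_i,x]_A-[a_i,y]_A\otimes[b_i,x]_A\big)$, which vanishes exactly when its image under $\sigma$ does; that image is the negative of the previous expression, so \eqref{eq:defi:Leibniz bialg1} $\Leftrightarrow$ \eqref{mp1}.

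For part (b), note first that \eqref{eq:defi:Leibniz bialg2} is linear in $\Delta$, hence linear in $r$, and that \eqref{mp2} is also linear in $r$, depending on it only through $F(\cdot)(r-\sigma(r))$. It therefore suffices to show that the difference of the two sides of \eqref{eq:defi:Leibniz bialg2} equals a fixed rearrangement of the left-hand side of \eqref{mp2} as an identity of $r$-linear tensors; in particular this difference should vanish when $r$ is symmetric, which is a convenient partial check. The computation itself proceeds by expanding $\Delta_r([x,y]_A)=F([x,y]_A)r$ and using \eqref{eq:defi:Leibniz} to resolve the triple brackets $[[x,y]_A,a_i]_A$, $[a_i,[x,y]_A]_A$, $[b_i,[x,y]_A]_A$ into brackets nested only over $x$ and $y$; expanding $(\mathrm{id}\otimes\mathcal{R}_A(y)-(\mathcal{L}_A+\mathcal{R}_A)(y)\otimes\mathrm{id})(\mathrm{id}+\sigma)\Delta_r(x)$ and $(\mathrm{id}\otimes\mathcal{L}_A(x)+\mathcal{L}_A(x)\otimes\mathrm{id})\Delta_r(y)$ in the same way; collecting all monomials; and letting \eqref{eq:defi:Leibniz} and \eqref{eq:defi:Leibniz2} collapse the redundant ones. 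What survives regroups into the four summands of \eqref{mp2}.

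The main obstacle is the scale of the bookkeeping in part (b): after full expansion \eqref{eq:defi:Leibniz bialg2} becomes a sum of a couple of dozen tensor monomials in $a_i,b_i,x,y$, and the cancellations become visible only after each nested bracket has been normalized via \eqref{eq:defi:Leibniz}. Keeping the part symmetric in $(a_i,b_i)$ separate from the antisymmetric part, and systematically using \eqref{eq:defi:Leibniz2} to collapse every sum of shape $[[x,a_i]_A+[a_i,x]_A,\,\cdot\,]_A$, is what keeps the calculation tractable.
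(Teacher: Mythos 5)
Your proposal follows the paper's own route: substitute $\Delta_r=F(\cdot)r$ into \eqref{eq:defi:Leibniz bialg1} and \eqref{eq:defi:Leibniz bialg2} and reduce using \eqref{eq:defi:Leibniz} and its consequence \eqref{eq:defi:Leibniz2}; your part (a) is complete and agrees term-for-term with the paper's computation, which records the difference of the two sides of \eqref{eq:defi:Leibniz bialg1} as $\sigma(\mathcal{R}_{A}(y)\otimes\mathrm{id})F(x)(r-\sigma(r))$, i.e.\ the $\sigma$-image of the expression you obtain. For part (b) you only outline the expansion rather than carry it out, but the outline is exactly the paper's calculation, which splits the difference of the two sides of \eqref{eq:defi:Leibniz bialg2} into nine groups of monomials, kills five of them by \eqref{eq:defi:Leibniz} and \eqref{eq:defi:Leibniz2}, and identifies the surviving four with the four summands of \eqref{mp2}.
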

\begin{proof}  (\ref{it:1}).~ By Eq.~\eqref{eq:cob}, we have
        \begin{eqnarray*}
            &&\sigma(\mathcal{R}_{A}(y)\otimes\mathrm{id})\Delta_{r}(x)-(\mathcal{R}_{A}(x)\otimes\mathrm{id})\Delta_{r}(y)\\
            &&=\sum_{i}(-[b_{i},x]_{A}\otimes[a_{i},y]_{A}+b_{i}\otimes[[x,a_{i}]_{A},y]_{A}+b_{i}\otimes[[a_{i},x]_{A},y]_{A}\\
            &&\ \ +[a_{i},x]_{A}\otimes[b_{i},y]_{A}-[[y,a_{i}]_{A},x]_{A}\otimes b_{i}-[[a_{i},y]_{A},x]_{A}\otimes b_{i})\\
            &&\overset{\eqref{eq:defi:Leibniz2}}{=}\sum_{i}(-[b_{i},x]_{A}\otimes[a_{i},y]_{A}+[a_{i},x]_{A}\otimes[b_{i},y]_{A})\\
            &&=\sigma(\mathcal{R}_{A}(y)\otimes\mathrm{id})F(x)(r-\sigma(r)).
        \end{eqnarray*}
        Hence the conclusion follows.

  (\ref{it:2}).       By Eq.~\eqref{eq:cob}, we have
        \begin{eqnarray*}
            &&\Delta_{r}([x,y]_{A})-(\mathrm{id}\otimes\mathcal{L}_{A}(x)+\mathcal{L}_{A}(x)\otimes\mathrm{id})\Delta_{r}(y)\\
            &&-(\mathrm{id}\otimes\mathcal{R}_{A}(y)-(\mathcal{L}_{A}+\mathcal{R}_{A})(y)\otimes\mathrm{id})(\mathrm{id}+\sigma)\Delta_{r}(x)\\
            &&=\sum_{i}(-a_{i}\otimes[b_{i},[x,y]_{A}]_{A}+[[x,y]_{A},a_{i}]_{A}\otimes b_{i}+[a_{i},[x,y]_{A}]_{A}\otimes b_{i}\\
            &&\ \ +a_{i}\otimes[x,[b_{i},y]_{A}]_{A}-[y,a_{i}]_{A}\otimes[x,b_{i}]_{A}-[a_{i},y]_{A}\otimes[x,b_{i}]_{A}\\
            &&\ \ +[x,a_{i}]_{A}\otimes[b_{i},y]_{A}-[x,[y,a_{i}]_{A}]_{A}\otimes b_{i}-[x,[a_{i},y]_{A}]_{A}\otimes b_{i}\\
            &&\ \ +a_{i}\otimes[[b_{i},x]_{A},y]_{A}-[x,a_{i}]_{A}\otimes[b_{i},y]_{A}-[a_{i},x]_{A}\otimes [b_{i},y]_{A}\\
            &&\ \ +[b_{i},x]_{A}\otimes[a_{i},y]_{A}-b_{i}\otimes[[x,a_{i}]_{A},y]_{A}-b_{i}\otimes[[a_{i},x]_{A},y]_{A}\\
            &&\ \ -[y,a_{i}]_{A}\otimes[b_{i},x]_{A}+[y,[x,a_{i}]_{A}]_{A}\otimes b_{i}+[y,[a_{i},x]_{A}]_{A}\otimes b_{i}\\
            &&\ \ -[y,[b_{i},x]_{A}]_{A}\otimes a_{i}+[y,b_{i}]_{A}\otimes[x,a_{i}]_{A}+[y,b_{i}]_{A}\otimes[a_{i},x]_{A}\\
            &&\ \ -[a_{i},y]_{A}\otimes[b_{i},x]_{A}+[[x,a_{i}]_{A},y]_{A}\otimes b_{i}+[[a_{i},x]_{A},y]_{A}\otimes b_{i}\\
            &&\ \ -[[b_{i},x]_{A},y]_{A}\otimes a_{i}+[b_{i},y]_{A}\otimes[x,a_{i}]_{A}+[b_{i},y]_{A}\otimes [a_{i},x]_{A})\\
            &&=\sum_{k=1}^{9}B(k),
        \end{eqnarray*}
        where
        \begin{small}
        \begin{eqnarray*}
            B(1)&=&\sum_{i}([x,a_{i}]_{A}\otimes[b_{i},y]_{A}-[x,a_{i}]_{A}\otimes [b_{i},y]_{A})=0,\\
            B(2)&=&\sum_{i}(-a_{i}\otimes[b_{i},[x,y]_{A}]_{A}+a_{i}\otimes[x,[b_{i},y]_{A}]_{A}+a_{i}\otimes[[b_{i},x]_{A},y]_{A})\overset{\eqref{eq:defi:Leibniz}}{=}0,\\
            B(3)&=&\sum_{i}([[x,y]_{A},a_{i}]_{A}\otimes b_{i}-[x,[y,a_{i}]_{A}]_{A}\otimes b_{i}+[y,[x,a_{i}]_{A}]_{A}\otimes b_{i})\overset{\eqref{eq:defi:Leibniz}}{=}0,\\
            B(4)&=&\sum_{i}(-b_{i}\otimes[[x,a_{i}]_{A},y]_{A}-b_{i}\otimes[[a_{i},x]_{A},y]_{A})\overset{\eqref{eq:defi:Leibniz2}}{=}0,\\
            B(5)&=&\sum_{i}([[x,a_{i}]_{A},y]_{A}\otimes b_{i}+[[a_{i},x]_{A},y]_{A}\otimes b_{i})\overset{\eqref{eq:defi:Leibniz2}}{=}0,\\
            B(6)&=&\sum_{i}(-[a_{i},x]_{A}\otimes[b_{i},y]_{A}+[b_{i},x]_{A}\otimes[a_{i},y]_{A})=-\sigma(\mathcal{R}_{A}(y)\otimes\mathrm{id})F(x)(r-\sigma(r)),\\
            B(7)&=&\sum_{i}(-[a_{i},y]_{A}\otimes[b_{i},x]_{A}+[b_{i},y]_{A}\otimes[a_{i},x]_{A})=-\sigma(\mathcal{R}_{A}(x)\otimes\mathrm{id})F(y)(r-\sigma(r)),\\
            B(8)&=&\sum_{i}(-[y,a_{i}]_{A}\otimes [x,b_{i}]_{A}+[y,b_{i}]_{A}\otimes[x,a_{i}]_{A}
            -[y,a_{i}]_{A}\otimes[b_{i},x]_{A}+[y,b_{i}]_{A}\otimes[a_{i},x]_{A}\\
            &&+[y,[a_{i},x]_{A}]_{A}\otimes b_{i}-[y,[b_{i},x]_{A}]_{A}\otimes a_{i})\\
            &=&(\mathcal{L}_{A}(y)\otimes\mathrm{id})\sigma(F(x)(r-\sigma(r))),\\
            B(9)&=&\sum_{i}(-[a_{i},y]_{A}\otimes[x,b_{i}]_{A}+[b_{i},y]_{A}\otimes[x,a_{i}]_{A}+[a_{i},[x,y]_{A}]_{A}\otimes b_{i}-[x,[a_{i},y]_{A}]_{A}\otimes b_{i}\\
            &&-[[b_{i},x]_{A},y]_{A}\otimes a_{i})\\
            &\overset{\eqref{eq:defi:Leibniz}}{=}&\sum_{i}(-[a_{i},y]_{A}\otimes[x,b_{i}]_{A}+[b_{i},y]_{A}\otimes[x,a_{i}]_{A}-[[x,a_{i}]_{A},y]_{A}\otimes b_{i}+[[x,b_{i}]_{A},y]_{A}\otimes a_{i})\\
            &=&(\mathcal{R}_{A}(y)\otimes\mathrm{id})(\sigma-\mathrm{id})(F(x)(r-\sigma(r))).
        \end{eqnarray*}
    \end{small}
The proof is finished.
\end{proof}

Combining Propositions \ref{pro:coalg} and \ref{pro:matched pair}, we have the following corollary.

\begin{cor}\label{cor:quasitr}
    Let $(A,[-,-]_{A})$ be a Leibniz algebra and $r=\sum\limits_{i}a_{i}\otimes b_{i}\in A\otimes A$. Let $\Delta_{r}:A\rightarrow A\otimes A$ be a co-multiplication given by Eq.~\eqref{eq:cob}. Then $(A,[-,-]_{A},\Delta_{r})$ is a Leibniz bialgebra if and only if Eq.~\eqref{eq:coalg cond}, Eq.~\eqref{mp1} and Eq.~\eqref{mp2} hold.  In particular,  if $r$ is a  solution of the CLYBE in $(A,[-,-]_{A})$
 and the skew-symmetric part $\frac{1}{2}(r-\sigma(r))$ is invariant, that is,
 \begin{equation}\label{s.i.}
     F(x)(r-\sigma(r))=0,\;\forall x\in A,
 \end{equation}
 then $(A,[-,-]_{A},\Delta_{r})$ is a  Leibniz bialgebra.
\end{cor}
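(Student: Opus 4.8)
The plan is to derive the statement formally from Propositions \ref{pro:coalg} and \ref{pro:matched pair} together with the definition of a Leibniz bialgebra, so almost no new computation is needed. First I would recall that $(A,[-,-]_{A},\Delta_{r})$ is by definition a Leibniz bialgebra exactly when $(A,\Delta_{r})$ is a Leibniz coalgebra and the compatibility conditions \eqref{eq:defi:Leibniz bialg1} and \eqref{eq:defi:Leibniz bialg2} both hold. Proposition \ref{pro:coalg} says that $(A,\Delta_{r})$ is a Leibniz coalgebra if and only if Eq.~\eqref{eq:coalg cond} holds, while parts \ref{it:1} and \ref{it:2} of Proposition \ref{pro:matched pair} say that \eqref{eq:defi:Leibniz bialg1} is equivalent to Eq.~\eqref{mp1} and that \eqref{eq:defi:Leibniz bialg2} is equivalent to Eq.~\eqref{mp2}. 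Intersecting these three equivalences gives the first assertion.

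For the ``in particular'' clause I would substitute the two hypotheses, namely $[[r,r]]=0$ and (by linearity of $F(x)$) $F(x)(r-\sigma(r))=0$ for all $x\in A$, into Eqs.~\eqref{eq:coalg cond}, \eqref{mp1} and \eqref{mp2}, and observe that each of these equations is assembled, up to the application of fixed linear operators, entirely out of the blocks $[[r,r]]$, $F(a_{j})(r-\sigma(r))$, $\sigma\bigl(F(x)(r-\sigma(r))\bigr)$, $F(x)(r-\sigma(r))$ and $F(y)(r-\sigma(r))$. Indeed, in Eq.~\eqref{eq:coalg cond} the first parenthesised factor is $(\sigma\otimes\mathrm{id})[[r,r]]-\sum_{j}F(a_{j})(r-\sigma(r))\otimes b_{j}$, the second is $[[r,r]]$, and the third involves only $\sigma\bigl(F(x)(r-\sigma(r))\bigr)\otimes b_{j}$; all three vanish under the hypotheses, so the whole left-hand side is $0$. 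Similarly every summand of \eqref{mp1} and \eqref{mp2} carries a factor $F(x)(r-\sigma(r))$ or $F(y)(r-\sigma(r))$, hence vanishes. Thus \eqref{eq:coalg cond}, \eqref{mp1} and \eqref{mp2} all hold, and the first part of the corollary yields that $(A,[-,-]_{A},\Delta_{r})$ is a Leibniz bialgebra.

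There is no real analytic obstacle here: Propositions \ref{pro:coalg} and \ref{pro:matched pair} have already absorbed all of the delicate Leibniz-identity bookkeeping. The only points requiring a little care are to confirm that ``$\tfrac12(r-\sigma(r))$ is invariant'' is literally the condition $F(x)(r-\sigma(r))=0$ exploited above (immediate since $F(x)$ is linear), and to check that no term of the fairly long equation \eqref{eq:coalg cond} has been missed when arguing that it is a combination of vanishing blocks.
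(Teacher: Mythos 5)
Your proposal is correct and follows exactly the route the paper takes: the paper's entire proof is the remark that the corollary is obtained by combining Propositions \ref{pro:coalg} and \ref{pro:matched pair}, and your verification that every block of Eqs.~\eqref{eq:coalg cond}, \eqref{mp1} and \eqref{mp2} vanishes under the hypotheses $[[r,r]]=0$ and $F(x)(r-\sigma(r))=0$ is just the (correct) spelling-out of that combination.
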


\begin{defi}
    Let $(A,[-,-]_{A})$ be a Leibniz algebra and $r\in A\otimes A$.
    If $r$ is a solution of the CLYBE in $(A,[-,-]_{A})$ satisfying   Eq.~\eqref{s.i.},
    then we call that  the Leibniz bialgebra $(A,[-,-]_{A},\Delta_{r})$
   with $\Delta_r$ defined  by Eq.~\eqref{eq:cob} is a \textbf{quasi-triangular Leibniz bialgebra}.
\end{defi}

Note  that {\bf triangular Leibniz bialgebras} \cite{ST}   are obtained from symmetric solutions of the CLYBE.
Hence quasi-triangular Leibniz bialgebras contain triangular ones as a subclass.

\begin{pro}\label{pro:2.12}
If $(A,[-,-]_{A},\Delta_{r})$ is a quasi-triangular Leibniz bialgebra, then $(A$,
$[-,-]_{A}$,
$\Delta_{\sigma(r)})$ is also a quasi-triangular Leibniz bialgebra.
\end{pro}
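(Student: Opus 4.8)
The plan is to show that $\sigma(r)$ is itself a solution of the CLYBE whose skew-symmetric part is invariant, and then to invoke the ``in particular'' part of Corollary \ref{cor:quasitr} applied to $\sigma(r)$ in place of $r$. Since $(A,[-,-]_{A},\Delta_{r})$ is quasi-triangular, by definition $r$ is a solution of the CLYBE and $F(x)(r-\sigma(r))=0$ for all $x\in A$. By Proposition \ref{pro:equation equivalence}, the identity $[[\sigma(r),\sigma(r)]]=\sigma_{13}[[r,r]]$ holds, so $[[r,r]]=0$ immediately gives $[[\sigma(r),\sigma(r)]]=0$; that is, $\sigma(r)$ is again a solution of the CLYBE.

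It then remains to verify that the skew-symmetric part of $\sigma(r)$ is invariant in the sense of Eq.~\eqref{s.i.}. The key (and only) observation is that $\sigma$ is an involution on $A\otimes A$, so
\[
\sigma(r)-\sigma\big(\sigma(r)\big)=\sigma(r)-r=-\big(r-\sigma(r)\big).
\]
Hence for every $x\in A$ we get $F(x)\big(\sigma(r)-\sigma(\sigma(r))\big)=-F(x)\big(r-\sigma(r)\big)=0$, i.e.\ the skew-symmetric part $\tfrac12(\sigma(r)-\sigma(\sigma(r)))$ of $\sigma(r)$ is invariant. In other words, the skew-symmetric parts of $r$ and $\sigma(r)$ differ only by a sign, so invariance of one forces invariance of the other.

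Finally, the co-multiplication $\Delta_{\sigma(r)}$ is, by convention, the one attached to the $2$-tensor $\sigma(r)$ via Eq.~\eqref{eq:cob}. Since we have just checked that $\sigma(r)$ is a solution of the CLYBE satisfying Eq.~\eqref{s.i.}, Corollary \ref{cor:quasitr} shows that $(A,[-,-]_{A},\Delta_{\sigma(r)})$ is a Leibniz bialgebra, and by the definition of quasi-triangularity it is a quasi-triangular Leibniz bialgebra. I do not anticipate any real obstacle here: the whole argument reduces to the involutivity of $\sigma$ together with Proposition \ref{pro:equation equivalence} and Corollary \ref{cor:quasitr}, with no further computation needed.
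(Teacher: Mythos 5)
Your proof is correct and follows essentially the same route as the paper, which simply cites Proposition \ref{pro:equation equivalence} for the CLYBE part; you have merely spelled out the (implicit) observation that the skew-symmetric parts of $r$ and $\sigma(r)$ differ by a sign, so invariance transfers, and then invoked Corollary \ref{cor:quasitr}.
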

\begin{proof}
    It follows from Proposition \ref{pro:equation equivalence}.
\end{proof}

\begin{ex}\label{ex:2.13}
Consider the 4-dimensional Leibniz algebra $(A,[-,-]_{A})$ defined with respect to a basis $\{ e_{1},e_{2},e_{3},e_{4} \}$ by the following nonzero products:
\begin{eqnarray*}
    [e_{1},e_{2}]_{A}=e_{1}, \; [e_{2},e_{1}]_{A}=-e_{1},\; [e_{1},e_{3}]_{A}=-e_{4},\; [e_{2},e_{3}]_{A}=e_{3}.
\end{eqnarray*}
Then it is straightforward to check
\begin{equation*}
    r=e_{3}\otimes e_{1}+e_{4}\otimes e_{2}
\end{equation*}
is a   solution of the CLYBE in $(A,[-,-]_{A})$ and satisfies Eq.~\eqref{s.i.},
and thus gives rise to a quasi-triangular Leibniz bialgebra $(A,[-,-]_{A},\Delta_{r})$ with $\Delta_r$ defined by Eq.~\eqref{eq:cob}.
\end{ex}


Let $A$ and $V$ be vector spaces.
We identify $r\in A\otimes A$ as a linear map $T_{r}:A^{*}\rightarrow A$ via
\begin{equation}\label{eq:id}
    \langle T_{r}(a^{*}),b^{*}\rangle=\langle r, a^{*}\otimes b^{*}\rangle,\;\;\forall a^{*},b^{*}\in A^{*}.
\end{equation}
For a linear map $f:A\rightarrow\mathrm{End}(V)$, define a linear map $f^{*}:A\rightarrow\mathrm{End}(V^{*})$ by
\begin{equation}
    \langle f^{*}(x)u^{*}, v\rangle=-\langle u^{*},f(x)v\rangle,\;\;\forall x\in A, u^{*}\in V^{*},v\in V {{.}}
\end{equation}


\begin{pro}\label{pro:CLYBE1}
Let $(A,[-,-]_{A})$ be a Leibniz algebra and $r\in A\otimes A$.
Then $r$ is a solution of the CLYBE in $(A,[-,-]_{A})$ if and only if the following equation holds:
\begin{equation}\label{eq:T homo2}
[T_{\sigma(r)}(a^{*}),T_{\sigma(r)}(b^{*})]_{A}=T_{\sigma(r)}(\mathcal{L}^{*}_{A}(T_{r}(a^{*}))b^{*}-(\mathcal{L}^{*}_{A}+\mathcal{R}^{*}_{A})(T_{\sigma(r)}(b^{*}))a^{*}), \;\;\forall a^{*},b^{*}\in A^{*}.
\end{equation}
If in addition $r$ satisfies   Eq.~\eqref{s.i.}, then $r$ is a solution of the CLYBE in $(A,[-,-]_{A})$ if and only if
the following equation holds:
\begin{equation}\label{eq:T homo1}
    [T_{r}(a^{*}),T_{r}(b^{*})]_{A}=T_{r}(\mathcal{L}^{*}_{A}(T_{r}(a^{*}))b^{*}-(\mathcal{L}^{*}_{A}+\mathcal{R}^{*}_{A})(T_{\sigma(r)}(b^{*}))a^{*}).
\end{equation}
\end{pro}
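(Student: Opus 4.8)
The plan is to dualize everything through the identification \eqref{eq:id}: writing $r=\sum_{i}a_{i}\otimes b_{i}$, we have $T_{r}(a^{*})=\sum_{i}\langle a_{i},a^{*}\rangle b_{i}$ and $T_{\sigma(r)}(a^{*})=\sum_{i}\langle b_{i},a^{*}\rangle a_{i}$, and since the pure tensors $a^{*}\otimes b^{*}$ span $(A\otimes A)^{*}$, the $3$-tensor $[[r,r]]\in A\otimes A\otimes A$ vanishes if and only if $(\mathrm{id}\otimes a^{*}\otimes b^{*})[[r,r]]=0$ for all $a^{*},b^{*}\in A^{*}$. First I would compute, term by term from \eqref{eq:admissible}, the image of each of $[r_{12},r_{13}]$, $[r_{12},r_{23}]$, $[r_{23},r_{12}]$, $[r_{23},r_{13}]$ under $\mathrm{id}\otimes a^{*}\otimes b^{*}$; collapsing one sum at a time via $\sum_{j}\langle b_{j},b^{*}\rangle a_{j}=T_{\sigma(r)}(b^{*})$ and $\sum_{j}\langle a_{j},a^{*}\rangle b_{j}=T_{r}(a^{*})$, and using $\langle f^{*}(x)u^{*},v\rangle=-\langle u^{*},f(x)v\rangle$ together with $[x,y]_{A}=\mathcal{L}_{A}(x)y=\mathcal{R}_{A}(y)x$, one obtains
\begin{align*}
(\mathrm{id}\otimes a^{*}\otimes b^{*})[[r,r]]={}&[T_{\sigma(r)}(a^{*}),T_{\sigma(r)}(b^{*})]_{A}+T_{\sigma(r)}\big((\mathcal{L}^{*}_{A}+\mathcal{R}^{*}_{A})(T_{\sigma(r)}(b^{*}))a^{*}\big)\\
&-T_{\sigma(r)}\big(\mathcal{L}^{*}_{A}(T_{r}(a^{*}))b^{*}\big).
\end{align*}
Its vanishing for all $a^{*},b^{*}$ is exactly \eqref{eq:T homo2}, which proves the first equivalence.

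For the second statement, set $w=r-\sigma(r)$, so that $T_{w}=T_{r}-T_{\sigma(r)}$ and $\sigma(w)=-w$. The key preparatory step is to rephrase the hypothesis $F(x)w=0$ (Eq.~\eqref{s.i.}) as operator identities for $T_{w}$. Pairing $F(x)w=0$ with $a^{*}\otimes b^{*}$ gives $T_{w}\big((\mathcal{L}^{*}_{A}+\mathcal{R}^{*}_{A})(x)a^{*}\big)=-[T_{w}(a^{*}),x]_{A}$; applying $\sigma$ to $F(x)w=0$ and using $\sigma(w)=-w$ gives $(\mathrm{id}\otimes(\mathcal{L}_{A}+\mathcal{R}_{A})(x))w=(\mathcal{R}_{A}(x)\otimes\mathrm{id})w$, which pairs to $T_{w}(\mathcal{R}^{*}_{A}(x)a^{*})=-[x,T_{w}(a^{*})]_{A}-[T_{w}(a^{*}),x]_{A}$; subtracting this from the previous identity yields the clean relation $T_{w}\big(\mathcal{L}^{*}_{A}(x)a^{*}\big)=[x,T_{w}(a^{*})]_{A}$, valid for all $x\in A$ and $a^{*}\in A^{*}$.

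With these in hand I would simply subtract \eqref{eq:T homo2} from \eqref{eq:T homo1}. Both equations have the same right-hand argument $Y=\mathcal{L}^{*}_{A}(T_{r}(a^{*}))b^{*}-(\mathcal{L}^{*}_{A}+\mathcal{R}^{*}_{A})(T_{\sigma(r)}(b^{*}))a^{*}$, so the right-hand-side difference is $T_{w}(Y)$, while the left-hand-side difference, after writing $T_{r}=T_{\sigma(r)}+T_{w}$ and expanding the bracket, equals $[T_{\sigma(r)}(a^{*}),T_{w}(b^{*})]_{A}+[T_{w}(a^{*}),T_{\sigma(r)}(b^{*})]_{A}+[T_{w}(a^{*}),T_{w}(b^{*})]_{A}$. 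Applying the two operator identities above to the two summands of $T_{w}(Y)$ (with $x=T_{r}(a^{*})$ in the first, $x=T_{\sigma(r)}(b^{*})$ in the second) and again expanding $T_{r}=T_{\sigma(r)}+T_{w}$ produces exactly the same three-term expression. Hence, under Eq.~\eqref{s.i.}, \eqref{eq:T homo1} and \eqref{eq:T homo2} are equivalent, and the first part identifies both with the CLYBE. (Proposition~\ref{pro:equation equivalence}, relating the CLYBE for $r$ and for $\sigma(r)$, provides a consistency check but is not needed.)

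I expect the only genuinely delicate step to be the term-by-term computation in the first paragraph: it is conceptually routine but requires careful bookkeeping of which tensor factor of $r$ each partial contraction consumes — so that $T_{r}$ versus $T_{\sigma(r)}$ lands in the correct place — together with the signs coming from the dual actions and the left-Leibniz convention. The one spot where a small idea is needed is the operator reformulation of Eq.~\eqref{s.i.}: one must combine $F(x)w=0$ with its image under $\sigma$ in order to isolate the $\mathcal{L}^{*}_{A}$-only identity $T_{w}(\mathcal{L}^{*}_{A}(x)a^{*})=[x,T_{w}(a^{*})]_{A}$, which is what makes the final subtraction collapse.
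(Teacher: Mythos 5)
Your proposal is correct. For the first equivalence it is essentially the paper's own computation, just packaged differently: the paper pairs each of the three terms of \eqref{eq:T homo2} against $a^{*}\otimes b^{*}\otimes c^{*}$ and recognizes the result as $\langle\xi[[r,r]],a^{*}\otimes b^{*}\otimes c^{*}\rangle$ with $\xi(x\otimes y\otimes z)=y\otimes z\otimes x$, whereas you contract $[[r,r]]$ in its last two slots and read off the operator identity directly; I checked your term-by-term contraction of \eqref{eq:admissible} and it is right, including the placement of $T_{r}$ versus $T_{\sigma(r)}$ and the signs from $f^{*}$. For the second equivalence your route is genuinely different. The paper runs a second full tensor computation and shows that the defect of \eqref{eq:T homo1} equals $\langle(\sigma\otimes\mathrm{id})[[r,r]]-\sum_{j}F(a_{j})(r-\sigma(r))\otimes b_{j},\,a^{*}\otimes b^{*}\otimes c^{*}\rangle$, then invokes Eq.~\eqref{s.i.} to kill the second summand. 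You instead set $w=r-\sigma(r)$, convert $F(x)w=0$ into the operator identities $T_{w}((\mathcal{L}^{*}_{A}+\mathcal{R}^{*}_{A})(x)a^{*})=-[T_{w}(a^{*}),x]_{A}$ and $T_{w}(\mathcal{L}^{*}_{A}(x)a^{*})=[x,T_{w}(a^{*})]_{A}$ (these are exactly Eq.~\eqref{eq:lem1} of Lemma \ref{lem:4.1} and Eq.~\eqref{eq:lem5} from Proposition \ref{pro ss inv Leibniz}, specialized to $w$), and show by a short bilinear expansion that \eqref{eq:T homo1} and \eqref{eq:T homo2} differ by $T_{w}(Y)$ minus the cross terms, which cancel identically; I verified the three-term matching and it closes. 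Your argument is shorter and avoids the second long contraction, at the cost of not producing the explicit defect formula $(\sigma\otimes\mathrm{id})[[r,r]]-\sum_{j}F(a_{j})(r-\sigma(r))\otimes b_{j}$, which the paper's version shares with the computation of $A(1)$ in Proposition \ref{pro:coalg} and reuses there; both proofs are complete and correct.
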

\begin{proof}
We set that $r=\sum\limits_{i} a_{i}\otimes b_{i}\in A\otimes A$. For any $a^{*},b^{*},c^{*}\in A^{*}$,  we have
{\small
\begin{eqnarray*}
\langle [T_{\sigma(r)}(a^{*}),T_{\sigma(r)}(b^{*})]_{A},c^{*}\rangle&=&-\langle T_{\sigma(r)}(a^{*}),\mathcal{R}^{*}_{A}( T_{\sigma(r)}(b^{*}) )c^{*}\rangle=-\langle r, \mathcal{R}^{*}_{A}( T_{\sigma(r)}(b^{*}) )c^{*}\otimes a^{*}\rangle\\
&=&-\sum_{i}\langle a_{i}, \mathcal{R}^{*}_{A}( T_{\sigma(r)}(b^{*}) )c^{*}\rangle\langle b_{i}, a^{*}\rangle=
\sum_{i}\langle [a_{i},T_{\sigma(r)}(b^{*})]_{A},c^{*}\rangle\langle b_{i},a^{*}\rangle\\
&=&-\sum_{i}\langle T_{\sigma(r)}(b^{*}),\mathcal{L}^{*}_{A}(a_{i})c^{*}\rangle\langle b_{i},a^{*}\rangle=-\sum_{i}\langle r, \mathcal{L}^{*}_{A}(a_{i})c^{*}\otimes b^{*}\rangle\langle b_{i},a^{*}\rangle\\
&=&-\sum_{i,j}\langle a_{j},\mathcal{L}^{*}_{A}(a_{i})c^{*}\rangle\langle b_{j},b^{*}\rangle\langle b_{i},a^{*}\rangle\\
&=&\sum_{i,j}\langle b_{i}\otimes b_{j}\otimes [a_{i},a_{j}]_{A}, a^{*}\otimes b^{*}\otimes c^{*}\rangle,\\
\langle T_{\sigma(r)}( \mathcal{L}^{*}_{A}(T_{r}(a^{*}))b^{*} ), c^{*}\rangle&=&\langle r, c^{*}\otimes \mathcal{L}^{*}_{A}( T_{r}(a^{*}) ) b^{*}\rangle=\sum_{i}\langle a_{i},c^{*}\rangle\langle b_{i},\mathcal{L}^{*}_{A}( T_{r}(a^{*} ))b^{*}\rangle\\
&=&-\sum_{i}\langle a_{i}, c^{*}\rangle\langle [T_{r}(a^{*}),b_{i}]_{A}, b^{*}\rangle=\sum_{i}\langle a_{i},c^{*}\rangle\langle T_{r}(a^{*}),\mathcal{R}^{*}_{A}(b_{i})b^{*}\rangle\\
&=&\sum_{i}\langle a_{i},c^{*}\rangle\langle r,a^{*}\otimes\mathcal{R}^{*}_{A}(b_{i})b^{*}\rangle=\sum_{i,j}\langle a_{i},c^{*}\rangle\langle a_{j},a^{*}\rangle\langle b_{j},\mathcal{R}^{*}_{A}(b_{i})b^{*}\rangle\\
&=&-\sum_{i,j}\langle a_{j}\otimes [b_{j},b_{i}]_{A}\otimes a_{i},a^{*}\otimes b^{*}\otimes c^{*} \rangle,
\end{eqnarray*} }
and similarly
\begin{equation*}
    -\langle T_{\sigma(r)}( (\mathcal{L}^{*}_{A}+\mathcal{R}^{*}_{A})( T_{\sigma(r)}(b^{*} )a^{*} ) ), c^{*}\rangle=\sum_{i,j} \langle ([a_{j},b_{i}]_{A}+[b_{i},a_{j}]_{A})\otimes b_{j}\otimes a_{i}, a^{*}\otimes b^{*}\otimes c^{*}\rangle.
\end{equation*}
Hence we have
\begin{eqnarray*}
&&\langle [T_{\sigma(r)}(a^{*}),T_{\sigma(r)}(b^{*})]_{A}-T_{\sigma(r)}(\mathcal{L}^{*}_{A}(T_{r}(a^{*}))b^{*}-(\mathcal{L}^{*}_{A}+\mathcal{R}^{*}_{A})(T_{\sigma(r)}(b^{*}))a^{*}), c^{*}\rangle\\
&&=\langle b_{i}\otimes b_{j}\otimes[a_{i},a_{j}]_{A}+a_{j}\otimes[b_{j},b_{i}]_{A}\otimes a_{i}-([a_{j},b_{i}]_{A}+[b_{i},a_{j}]_{A})\otimes b_{j}\otimes a_{i},a^{*}\otimes b^{*}\otimes c^{*}\rangle\\
&&
=\langle \xi[[r,r]], a^{*}\otimes b^{*}\otimes c^{*}\rangle,
\end{eqnarray*}
where $\xi\in\mathrm{End}(A\otimes A\otimes A)$ is given by $\xi(x\otimes y\otimes z)=y\otimes z\otimes x$ for all $x,y,z\in A$.
That is, $r$ is a solution of the CLYBE in $(A,[-,-]_{A})$ if and only if \eqref{eq:T homo2} holds.
Similarly, we have
{\small
\begin{eqnarray*}
    \langle [T_{r}(a^{*}),T_{r}(b^{*})]_{A},c^{*}\rangle&=&\sum_{i,j}\langle a_{i}\otimes a_{j}\otimes[b_{i},b_{j}]_{A},a^{*}\otimes b^{*}\otimes c^{*}\rangle,\\
    \langle T_{r}(\mathcal{L}_{A}^{*}(T_{r}(a^{*}))b^{*}), c^{*}\rangle&=&-\sum_{i,j}\langle a_{j}\otimes[b_{j},a_{i}]_{A}\otimes b_{i},a^{*}\otimes b^{*}\otimes c^{*}\rangle,\\
    -\langle T_{r} ( ( \mathcal{L}_{A}^{*} +\mathcal{R}_{A}^{*} ) (T_{\sigma(r)}(b^{*}))a^{*}  ),c^{*}\rangle&=&\sum_{i,j}\langle ([a_{i},a_{j}]_{A}+[a_{j},a_{i}]_{A})\otimes b_{j}\otimes b_{i}, a^{*}\otimes b^{*}\otimes c^{*}\rangle.
\end{eqnarray*}
}
Hence we obtain that
{\small
\begin{eqnarray*}
    &&\langle [T_{r}(a^{*}),T_{r}(b^{*})]_{A},c^{*}- T_{r}(\mathcal{L}^{*}_{A}(T_{r}(a^{*}))b^{*}-(\mathcal{L}^{*}_{A}+\mathcal{R}^{*}_{A})(T_{\sigma(r)}(b^{*}))a^{*}) \rangle\\
    &&=\sum_{i,j}\langle a_{i}\otimes a_{j}\otimes[b_{i},b_{j}]_{A}+a_{j}\otimes[b_{j},a_{i}]_{A}\otimes b_{i}-([a_{i},a_{j}]_{A}+[a_{j},a_{i}]_{A})\otimes b_{j}\otimes b_{i}, a^{*}\otimes b^{*}\otimes c^{*}\rangle\\
    &&=\langle (\sigma\otimes\mathrm{id})[[r,r]]-\sum_{j} F(a_{j})(r-\sigma(r))\otimes b_{j}, a^{*}\otimes b^{*}\otimes c^{*}\rangle.
\end{eqnarray*}}
Thus we deduce that if  $r$ satisfies Eq.~\eqref{s.i.}, then $r$ is a solution of the CLYBE in $(A,[-,-]_{A})$ if and only if
Eq.~\eqref{eq:T homo1} holds.
\end{proof}

\begin{lem}\label{lem:dual mult}
Let  $(A,[-,-]_{A})$  be a Leibniz algebra and $r\in A\otimes A$.
Define a co-multiplication $\Delta_{r}:A\rightarrow A\otimes A$ by Eq.~\eqref{eq:cob}, whose linear dual is denoted by $[-,-]_{r}:A^{*}\otimes A^{*}\rightarrow A^{*}$.
Then the multiplication $[-,-]_{r}$ on $A^{*}$ is given by
\begin{equation}\label{eq:dual mul}
[a^{*},b^{*}]_{r}=\mathcal{L}^{*}_{A}(T_{r}(a^{*}))b^{*}-(\mathcal{L}^{*}_{A}+\mathcal{R}^{*}_{A})(T_{\sigma(r)}(b^{*}))a^{*},\;\;\forall a^{*},b^{*}\in A^{*}.
\end{equation}
\end{lem}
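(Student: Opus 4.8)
The plan is to compute the pairing $\langle \Delta_r(x), a^*\otimes b^*\rangle$ directly from the defining formula \eqref{eq:cob} and \eqref{eq:F}, and then recognize the result as $\langle x, [a^*,b^*]_r\rangle$ for the claimed formula \eqref{eq:dual mul}. Writing $r=\sum_i a_i\otimes b_i$, by \eqref{eq:cob} and \eqref{eq:F} we have
\begin{equation*}
\Delta_r(x) = \sum_i\Big([x,a_i]_A\otimes b_i + [a_i,x]_A\otimes b_i - a_i\otimes [b_i,x]_A\Big),
\end{equation*}
using $(\mathcal L_A+\mathcal R_A)(x)a_i = [x,a_i]_A+[a_i,x]_A$ and $\mathcal R_A(x)b_i=[b_i,x]_A$. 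I would then pair each of the three terms with $a^*\otimes b^*$ and push everything onto the $x$-slot.

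The key computation is bookkeeping with the identification \eqref{eq:id} and the dual maps. For the first two terms, $\langle [x,a_i]_A, a^*\rangle = -\langle a_i, \mathcal R^*_A(x)a^*\rangle$ wait — I instead pull the $x$ out: $\langle [x,a_i]_A,a^*\rangle = \langle \mathcal L_A(x)a_i, a^*\rangle = -\langle a_i, \mathcal L^*_A(x)a^*\rangle$, and similarly $\langle [a_i,x]_A, a^*\rangle = \langle \mathcal R_A(x)a_i, a^*\rangle = -\langle a_i, \mathcal R^*_A(x)a^*\rangle$. Summing against $\langle b_i,b^*\rangle$ and using \eqref{eq:id} for $T_r$ gives $-\langle T_r(\mathcal L^*_A(x)a^* + \mathcal R^*_A(x)a^*), b^*\rangle$, which I then need to reorganize so that $x$ appears inside a $\mathcal L^*_A$ or $\mathcal R^*_A$ acting on $T_r(a^*)$ or $T_{\sigma(r)}(b^*)$ — this requires moving the operator across $T_r$ via its transpose, i.e. identities like $\langle T_r(\mathcal L^*_A(x)a^*),b^*\rangle = -\langle \mathcal L_A(x)T_{?}(\cdot),\cdot\rangle$; the cleanest route is to pair everything against a test element $c^*$ only after having the answer in $A$, or equivalently to verify the identity $\langle x,[a^*,b^*]_r\rangle = \langle \Delta_r(x),a^*\otimes b^*\rangle$ by expanding the right-hand side of \eqref{eq:dual mul} symmetrically. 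For the third term, $\langle a_i\otimes[b_i,x]_A, a^*\otimes b^*\rangle = \langle a_i,a^*\rangle\langle \mathcal R_A(x)b_i, b^*\rangle = -\langle a_i,a^*\rangle\langle b_i,\mathcal R^*_A(x)b^*\rangle = -\langle T_{\sigma(r)}(b^*)$-type expression; matching with $\mathcal L^*_A(T_r(a^*))b^*$ requires unwinding $\langle T_{\sigma(r)}(\mathcal R^*_A(x)b^*), a^*\rangle$ and transposing back.

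In practice the slickest organization is this: expand both $\langle \Delta_r(x), a^*\otimes b^*\rangle$ and $\langle x, [a^*,b^*]_r\rangle$ (with $[a^*,b^*]_r$ given by the right side of \eqref{eq:dual mul}) into sums of the form $\sum_i \langle x,-\rangle \langle a_i,-\rangle\langle b_i,-\rangle$ over triple pairings, using repeatedly that $\langle f^*(y)u^*,v\rangle = -\langle u^*,f(y)v\rangle$ and $\langle r,u^*\otimes v^*\rangle = \langle T_r(u^*),v^*\rangle = \langle T_{\sigma(r)}(v^*),u^*\rangle$. Term by term the two expansions coincide: the $\mathcal L^*_A(T_r(a^*))b^*$ piece matches the $-a_i\otimes[b_i,x]_A$ summand, while the $-(\mathcal L^*_A+\mathcal R^*_A)(T_{\sigma(r)}(b^*))a^*$ piece matches the $[x,a_i]_A\otimes b_i + [a_i,x]_A\otimes b_i$ summands. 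The main (purely notational) obstacle is keeping the $\sigma$-twist straight — remembering that $\langle T_{\sigma(r)}(v^*), u^*\rangle = \langle T_r(u^*), v^*\rangle$, so that the $b_i$-in-the-first-slot terms naturally produce $T_{\sigma(r)}$ while the $a_i$-in-the-first-slot term produces $T_r$; once this dictionary is fixed, the identity is a direct comparison of coefficients and no Leibniz identity is needed.
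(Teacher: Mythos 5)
Your proposal is correct and follows essentially the same route as the paper: both compute $\langle\Delta_r(x),a^*\otimes b^*\rangle$ by transposing the operators in $F(x)$ onto $a^*\otimes b^*$, apply the dictionary $\langle r,u^*\otimes v^*\rangle=\langle T_r(u^*),v^*\rangle=\langle T_{\sigma(r)}(v^*),u^*\rangle$, and transpose back onto $x$; your term-matching (the $-a_i\otimes[b_i,x]_A$ summand giving $\mathcal{L}^*_A(T_r(a^*))b^*$ and the $(\mathcal{L}_A+\mathcal{R}_A)(x)a_i\otimes b_i$ summands giving $-(\mathcal{L}^*_A+\mathcal{R}^*_A)(T_{\sigma(r)}(b^*))a^*$) is exactly right, as is the observation that no Leibniz identity is needed.
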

\begin{proof}
For any $x\in A, a^{*},b^{*}\in A^{*}$,  we have
    \begin{eqnarray*}
        \langle x,[a^{*},b^{*}]_{r}\rangle&=&\langle \Delta_{r}(x),a^{*}\otimes b^{*}\rangle\\
        &\overset{\eqref{eq:cob}}{=}&\langle ((\mathcal{L}_{A}+\mathcal{R}_{A})(x)\otimes\mathrm{id}-\mathrm{id}\otimes\mathcal{R}_{A}(x))r, a^{*}\otimes b^{*}\rangle\\
        &=&\langle r,(-(\mathcal{L}^{*}_{A}+\mathcal{R}^{*}_{A})(x)\otimes\mathrm{id}+\mathrm{id}\otimes\mathcal{R}^{*}_{A}(x))a^{*}\otimes b^{*}\rangle\\
        &=&\langle r,a^{*}\otimes \mathcal{R}^{*}_{A}(x)b^{*}\rangle-\langle \sigma(r),b^{*}\otimes ( \mathcal{L}^{*}_{A}+ \mathcal{R}^{*}_{A} )(x)a^{*}\rangle\\
        &=&\langle T_{r}(a^{*}),\mathcal{R}^{*}_{A}(x)b^{*}\rangle-\langle
        T_{\sigma(r)}(b^{*}), ( \mathcal{L}^{*}_{A}+ \mathcal{R}^{*}_{A} )(x)a^{*}\rangle\\
        &=&-\langle [T_{r}(a^{*}),x]_{A},b^{*}\rangle+\langle [T_{\sigma(r)}(b^{*}),x]_{A}+[x,T_{\sigma(r)}(b^{*})]_{A},a^{*}\rangle\\
        &=&\langle x,\mathcal{L}^{*}_{A}(T_{r}(a^{*}))b^{*}-(\mathcal{L}^{*}_{A}+\mathcal{R}^{*}_{A})(T_{\sigma(r)}(b^{*}))a^{*}\rangle.
    \end{eqnarray*}
Thus Eq.~\eqref{eq:dual mul} holds.
\end{proof}


\begin{cor}\label{cor:homo}
Let $(A,[-,-]_{A},\Delta_{r})$ be a quasi-triangular Leibniz bialgebra.
Then $T_{r}$ and $T_{\sigma(r)}$ are both Leibniz algebra homomorphisms from $(A^{*},[-,-]_{r})$ to $(A,[-,-]_{A})$, where the multiplication $[-,-]_{r}$ is the linear dual of $\Delta_{r}$.
\end{cor}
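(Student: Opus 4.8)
The plan is to deduce Corollary \ref{cor:homo} directly from the two operator identities established in Proposition \ref{pro:CLYBE1} together with the description of the dual multiplication in Lemma \ref{lem:dual mult}. First I would recall that, since $(A,[-,-]_A,\Delta_r)$ is a quasi-triangular Leibniz bialgebra, the $2$-tensor $r$ is by definition a solution of the CLYBE in $(A,[-,-]_A)$ whose skew-symmetric part is invariant, i.e.\ Eq.~\eqref{s.i.} holds. By Lemma \ref{lem:dual mult}, the multiplication $[-,-]_r$ on $A^*$ dual to $\Delta_r$ is exactly
\[
[a^{*},b^{*}]_{r}=\mathcal{L}^{*}_{A}(T_{r}(a^{*}))b^{*}-(\mathcal{L}^{*}_{A}+\mathcal{R}^{*}_{A})(T_{\sigma(r)}(b^{*}))a^{*},\qquad\forall\, a^{*},b^{*}\in A^{*}.
\]

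Next I would apply $T_r$ and $T_{\sigma(r)}$ to $[a^*,b^*]_r$. Applying $T_r$ gives
\[
T_{r}([a^{*},b^{*}]_{r})=T_{r}\big(\mathcal{L}^{*}_{A}(T_{r}(a^{*}))b^{*}-(\mathcal{L}^{*}_{A}+\mathcal{R}^{*}_{A})(T_{\sigma(r)}(b^{*}))a^{*}\big),
\]
and since $r$ solves the CLYBE and satisfies Eq.~\eqref{s.i.}, Proposition \ref{pro:CLYBE1} (the second equivalence, Eq.~\eqref{eq:T homo1}) tells us this right-hand side equals $[T_{r}(a^{*}),T_{r}(b^{*})]_{A}$. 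Hence $T_{r}([a^{*},b^{*}]_{r})=[T_{r}(a^{*}),T_{r}(b^{*})]_{A}$, so $T_r$ is a Leibniz algebra homomorphism from $(A^{*},[-,-]_{r})$ to $(A,[-,-]_{A})$. Similarly, applying $T_{\sigma(r)}$ and invoking the first equivalence of Proposition \ref{pro:CLYBE1} (Eq.~\eqref{eq:T homo2}, which holds for \emph{any} solution of the CLYBE, with no invariance hypothesis needed) yields $T_{\sigma(r)}([a^{*},b^{*}]_{r})=[T_{\sigma(r)}(a^{*}),T_{\sigma(r)}(b^{*})]_{A}$, so $T_{\sigma(r)}$ is a homomorphism as well.

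There is essentially no obstacle here: the corollary is a formal consequence of combining Lemma \ref{lem:dual mult} with the two operator forms of the CLYBE in Proposition \ref{pro:CLYBE1}. The only point requiring a moment's care is matching hypotheses with the two separate equivalences in Proposition \ref{pro:CLYBE1}: the identity \eqref{eq:T homo2} characterizing $T_{\sigma(r)}$ as a homomorphism is equivalent to the CLYBE outright, whereas the identity \eqref{eq:T homo1} characterizing $T_{r}$ as a homomorphism is only equivalent to the CLYBE \emph{under the additional assumption} Eq.~\eqref{s.i.}; but that assumption is built into the definition of a quasi-triangular Leibniz bialgebra, so both conclusions are available. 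I would therefore present the proof in two short sentences, one for $T_{\sigma(r)}$ citing Eq.~\eqref{eq:T homo2} and one for $T_r$ citing Eq.~\eqref{eq:T homo1}, each time rewriting the right-hand side via Eq.~\eqref{eq:dual mul} of Lemma \ref{lem:dual mult}.
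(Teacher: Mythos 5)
Your proposal is correct and follows exactly the paper's own argument: identify $[-,-]_{r}$ via Lemma \ref{lem:dual mult} and then invoke the two operator identities \eqref{eq:T homo2} and \eqref{eq:T homo1} of Proposition \ref{pro:CLYBE1}, the latter being available because Eq.~\eqref{s.i.} is part of the definition of a quasi-triangular Leibniz bialgebra. Your remark about which of the two equivalences needs the invariance hypothesis is a correct and slightly more explicit bookkeeping of the hypotheses than the paper provides.
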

\begin{proof}
By {{the}} assumption, $r$ is a solution of the CLYBE in
$(A,[-,-]_{A})$ satisfying  Eq.~\eqref{s.i.} and
$(A^{*},[-,-]_{r})$ is a Leibniz algebra. Then by Proposition
\ref{pro:CLYBE1} and Lemma \ref{lem:dual mult}, we have
\begin{equation*}
[T_{r}(a^{*}),T_{r}(b^{*})]_{A}=T_{r}([a^{*},b^{*}]_{r})
,\;
[T_{\sigma(r)}(a^{*}),T_{\sigma(r)}(b^{*})]_{A}=T_{\sigma(r)}([a^{*},b^{*}]_{r})
,\;\forall a^{*},b^{*}\in A^{*},
\end{equation*}
that is, $T_{r}$ and $T_{\sigma(r)}$ are both Leibniz algebra homomorphisms. 
\end{proof}

\section{Factorizable Leibniz bialgebras}\label{sec3}

In this section, we study factorizable Leibniz bialgebras as a subclass of quasi-triangular Leibniz bialgebras, which lead to a factorization of the underlying Leibniz algebras.
We show that  there is a canonical factorizable Leibniz bialgebra structure on the  double space of an arbitrary Leibniz bialgebra.

Let $(A,[-,-]_{A},\Delta_{r})$ be a quasi-triangular Leibniz bialgebra.
Recall that the operator $T_{r-\sigma(r)}$ is defined by
\begin{equation}\label{factorizable operator}
T_{r-\sigma(r)}=T_{r}-T_{\sigma(r)}:A^{*}\rightarrow A.
\end{equation}
If $T_{r-\sigma(r)}=0$, then $r$ is symmetric and  $(A,[-,-]_{A},\Delta_{r})$ is triangular.
Factorizable Leibniz bialgebras are however concerned with the opposite case that $T_{r-\sigma(r)}$ is nondegenerate.

\begin{defi}
    A quasi-triangular Leibniz bialgebra $(A,[-,-]_{A},\Delta_{r})$ is called \textbf{factorizable} if $T_{r-\sigma(r)}:A^{*}\rightarrow A$ defined by Eq.~\eqref{factorizable operator} is a linear isomorphism of vector spaces.
\end{defi}

As a subclass of quasi-triangular Leibniz bialgebras, factorizable Leibniz bialgebras also appear in pairs.

\begin{cor}\label{cor:fact pair}
    If  $(A,[-,-]_{A},\Delta_{r})$  is a factorizable Leibniz bialgebra, then  $(A,[-,-]_{A}$,
    $\Delta_{\sigma(r)})$  is also a factorizable Leibniz bialgebra.
\end{cor}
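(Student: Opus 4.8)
The plan is to deduce this from Proposition~\ref{pro:2.12} together with the observation that the factorizability condition transforms correctly under $r \mapsto \sigma(r)$. By Proposition~\ref{pro:2.12}, we already know that $(A,[-,-]_{A},\Delta_{\sigma(r)})$ is a quasi-triangular Leibniz bialgebra whenever $(A,[-,-]_{A},\Delta_{r})$ is; so the only thing left to check is that the relevant operator for $\sigma(r)$, namely $T_{\sigma(r)-\sigma(\sigma(r))} = T_{\sigma(r)-r} = T_{\sigma(r)}-T_{r}$, is a linear isomorphism. But $T_{\sigma(r)}-T_{r} = -(T_{r}-T_{\sigma(r)}) = -T_{r-\sigma(r)}$, which is a linear isomorphism precisely when $T_{r-\sigma(r)}$ is. Hence factorizability of $(A,[-,-]_{A},\Delta_{r})$ immediately gives factorizability of $(A,[-,-]_{A},\Delta_{\sigma(r)})$.

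In carrying this out I would first invoke Proposition~\ref{pro:2.12} to record that $(A,[-,-]_{A},\Delta_{\sigma(r)})$ is quasi-triangular, so in particular $\sigma(r)$ is a solution of the CLYBE with invariant skew-symmetric part (this is exactly what Proposition~\ref{pro:equation equivalence} and the symmetry of the condition \eqref{s.i.} under $\sigma$ give). Then I would apply the defining operator \eqref{factorizable operator} to $\sigma(r)$ in place of $r$: the associated factorization operator is $T_{\sigma(r)} - T_{\sigma(\sigma(r))}$, and since $\sigma$ is an involution this equals $T_{\sigma(r)} - T_{r} = -T_{r-\sigma(r)}$. Finally I would note that negation is a linear isomorphism of $\mathrm{End}(A^{*},A)$-type maps, so $-T_{r-\sigma(r)}$ is bijective iff $T_{r-\sigma(r)}$ is, and the latter holds by the factorizability hypothesis on $(A,[-,-]_{A},\Delta_{r})$.

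There is essentially no obstacle here; the statement is a short corollary whose entire content is bookkeeping about how the two relevant structures $\Delta_{r}$ and $\Delta_{\sigma(r)}$ interact under the flip $\sigma$. The only mild care needed is to make sure $T_{\sigma(r)}$ is computed from $\sigma(r)$ via the identification \eqref{eq:id} consistently — i.e. that the map identified with $\sigma(r)$ really is $T_{\sigma(r)}$ as used throughout Section~\ref{sec2} — but this is already the established notation, so no new verification is required. A one-line proof of the form ``It follows from Proposition~\ref{pro:2.12} and the fact that $T_{\sigma(r)-r}=-T_{r-\sigma(r)}$ is a linear isomorphism if and only if $T_{r-\sigma(r)}$ is'' suffices.
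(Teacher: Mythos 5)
Your argument is correct and is essentially identical to the paper's own proof: both invoke Proposition~\ref{pro:2.12} for quasi-triangularity and then note that $T_{\sigma(r)-\sigma(\sigma(r))}=T_{\sigma(r)-r}=-T_{r-\sigma(r)}$ is a linear isomorphism precisely when $T_{r-\sigma(r)}$ is. No issues.
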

\begin{proof}
    If $T_{r-\sigma(r)}$ is a linear isomorphism, then $$T_{\sigma(r)-\sigma(\sigma(r))}=T_{\sigma(r)-r}=-T_{r-\sigma(r)}$$ is also a linear isomorphism. Hence the conclusion follows from Proposition \ref{pro:2.12}.
\end{proof}

Let $(A,[-,-]_{A})$ be a Leibniz algebra and $r\in A\otimes A$. Define a linear map $T_{r}\oplus T_{\sigma(r)}:A^{*}\rightarrow A\oplus A$ by
\begin{equation*}
    T_{r}\oplus T_{\sigma(r)}(a^{*})=(T_{r}(a^{*}),T_{\sigma(r)}(a^{*})),\;\forall a^{*}\in A^{*}.
\end{equation*}
Then we have the following lemma which justifies the terminology of factorizable Leibniz bialgebras.

\begin{lem}\label{lem:fact}
    Let $(A,[-,-]_{A},\Delta_{r})$ be a factorizable Leibniz bialgebra. Then $\mathrm{Im}(T_{r}\oplus T_{\sigma(r)})$ is a Leibniz subalgebra of the direct sum  $A\oplus A$ of Leibniz algebras and is isomorphic to $(A^{*},[-,-]_{r})$. Moreover, for any $x\in A$, there is a unique decomposition
    $x=x_{1}-x_{2},$
    where $(x_{1},x_{2})\in\mathrm{Im}(T_{r}\oplus T_{\sigma(r)}).$
\end{lem}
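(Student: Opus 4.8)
The plan is to package Corollary \ref{cor:homo} with the nondegeneracy of $T_{r-\sigma(r)}$. First I would observe that, by Corollary \ref{cor:homo}, both $T_{r}$ and $T_{\sigma(r)}$ are Leibniz algebra homomorphisms from $(A^{*},[-,-]_{r})$ to $(A,[-,-]_{A})$. Since the bracket on the direct sum $A\oplus A$ is taken componentwise, a straightforward check shows that $T_{r}\oplus T_{\sigma(r)}:A^{*}\to A\oplus A$ is then itself a homomorphism of Leibniz algebras; in particular $\mathrm{Im}(T_{r}\oplus T_{\sigma(r)})$ is a Leibniz subalgebra of $A\oplus A$.

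Next I would establish that $T_{r}\oplus T_{\sigma(r)}$ is injective. Indeed, if $(T_{r}\oplus T_{\sigma(r)})(a^{*})=0$, then $T_{r}(a^{*})=T_{\sigma(r)}(a^{*})=0$, whence $T_{r-\sigma(r)}(a^{*})=T_{r}(a^{*})-T_{\sigma(r)}(a^{*})=0$, and since $T_{r-\sigma(r)}$ is a linear isomorphism we get $a^{*}=0$. Therefore $T_{r}\oplus T_{\sigma(r)}$ is an injective Leibniz algebra homomorphism, hence restricts to an isomorphism onto its image, giving $\mathrm{Im}(T_{r}\oplus T_{\sigma(r)})\cong(A^{*},[-,-]_{r})$.

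Finally, for the factorization of a given $x\in A$: surjectivity of $T_{r-\sigma(r)}$ yields $a^{*}\in A^{*}$ with $T_{r-\sigma(r)}(a^{*})=x$; setting $x_{1}=T_{r}(a^{*})$ and $x_{2}=T_{\sigma(r)}(a^{*})$ gives $x=x_{1}-x_{2}$ with $(x_{1},x_{2})=(T_{r}\oplus T_{\sigma(r)})(a^{*})\in\mathrm{Im}(T_{r}\oplus T_{\sigma(r)})$. For uniqueness, if also $x=x_{1}'-x_{2}'$ with $(x_{1}',x_{2}')=(T_{r}\oplus T_{\sigma(r)})(b^{*})$, then $x=T_{r-\sigma(r)}(b^{*})=T_{r-\sigma(r)}(a^{*})$, so injectivity of $T_{r-\sigma(r)}$ forces $b^{*}=a^{*}$ and hence $(x_{1}',x_{2}')=(x_{1},x_{2})$. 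There is no genuine obstacle here; the only points requiring a little care are to use both directions of the bijectivity of $T_{r-\sigma(r)}$ (surjectivity for existence of the decomposition, injectivity for uniqueness and for the isomorphism with $A^{*}$) and to note explicitly that the componentwise bracket on $A\oplus A$ is what makes the pair $(T_{r},T_{\sigma(r)})$ of homomorphisms assemble into a single Leibniz algebra homomorphism.
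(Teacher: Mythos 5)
Your proposal is correct and follows essentially the same route as the paper: invoke Corollary \ref{cor:homo} to see that $T_{r}\oplus T_{\sigma(r)}$ is a Leibniz algebra homomorphism, use the nondegeneracy of $T_{r-\sigma(r)}$ to get a trivial kernel and hence the isomorphism onto the image, and produce the decomposition via $x=T_{r}T_{r-\sigma(r)}^{-1}(x)-T_{\sigma(r)}T_{r-\sigma(r)}^{-1}(x)$. You merely spell out the uniqueness argument slightly more explicitly than the paper does.
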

\begin{proof}
    By Corollary \ref{cor:homo}, $T_{r}\oplus T_{\sigma(r)}$ is a Leibniz algebra homomorphism from $(A^{*},[-,-]_{r})$ to the direct sum $A\oplus A$ of  Leibniz algebras.
    By the nondegeneracy of $T_{r-\sigma(r)}:A^{*}\rightarrow A$,
    $\mathrm{Ker}(T_{r}\oplus T_{\sigma(r)})=0$, and thus
    $\mathrm{Im}(T_{r}\oplus T_{\sigma(r)})$ is isomorphic to $(A^{*},[-,-]_{r})$ as Leibniz algebras.  Moreover, 
    any element $x\in A$
    can be uniquely expressed as
    \begin{equation*}
        x=T_{r-\sigma(r)}T_{r-\sigma(r)}^{-1}(x)=T_{r}T_{r-\sigma(r)}^{-1}(x)-T_{\sigma(r)}T_{r-\sigma(r)}^{-1}(x)=x_{1}-x_{2},
    \end{equation*}
    where $x_{1}=T_{r}(T_{r-\sigma(r)}^{-1}(x))\in\mathrm{Im}(T_{r}),\; x_{2}=T_{\sigma(r)}(T_{r-\sigma(r)}^{-1}(x))\in\mathrm{Im}(T_{\sigma(r)})$. The proof is finished.
\end{proof}

\delete{
\begin{rmk}
    The factorizable cases were first considered in the context of Lie bialgebras, which are related to certain factorization problems in integrable systems (see \cite{Kos,Res,Sem,Lan} for origins and further study).

\end{rmk}

\cm{The above remark is necessary?}}

\begin{ex}
Let $(A,[-,-]_{A},\Delta_{r})$ be the quasi-triangular Leibniz bialgebra given by Example \ref{ex:2.13}.
Then we have
\begin{equation*}
    r-\sigma(r)=-e_{1}\otimes e_{3}-e_{2}\otimes e_{4}+e_{3}\otimes e_{1}+e_{4}\otimes e_{2},
\end{equation*}
which gives
\begin{equation*}
    T_{r-\sigma(r)}(e^{*}_{1})=-e_{3},\;
    T_{r-\sigma(r)}(e^{*}_{2})=-e_{4},\;
    T_{r-\sigma(r)}(e^{*}_{3})=e_{1},\;
    T_{r-\sigma(r)}(e^{*}_{4})=e_{2}.
\end{equation*}
Hence $T_{r-\sigma(r)}$  is nondegenerate and thus $(A,[-,-]_{A},\Delta_{r})$ is factorizable Leibniz bialgebra.
\end{ex}

\delete{
Recall the representation theory of Leibniz algebras.

\begin{defi}
A \textbf{representation} of a Leibniz algebra $(A,[-,-]_{A})$ is a triple $(\mathcal{L},\mathcal{R},V)$, where $V$ is a vector space and $\mathcal{L},\mathcal{R}:A\rightarrow\mathrm{End}(V)$ are linear maps such that the following equations hold for all $x,y\in A$:
\begin{eqnarray}
    \mathcal{L}([x,y]_{A})&=&\mathcal{L}(x)\mathcal{L}(y)-\mathcal{L}(y)\mathcal{L}(x),\label{eq:Leibniz rep1}\\
    \mathcal{R}([x,y]_{A})&=&\mathcal{L}(x)\mathcal{R}(y)-\mathcal{R}(y)\mathcal{L}(x),\label{eq:Leibniz rep2}\\
    \mathcal{R}(y)\mathcal{L}(x)&=&-\mathcal{R}(y)\mathcal{R}(x).\label{eq:Leibniz rep3}
\end{eqnarray}
Two representations $( \mathcal{L}_{1},\mathcal{R}_{1},V_{1} )$
and $( \mathcal{L}_{2},\mathcal{R}_{2},V_{2} )$ are called {\bf equivalent} if there exists a linear isomorphism $f:V_{1}\rightarrow V_{2}$ such that
\begin{equation*}
    f(\mathcal{L}_{1}(x)v)=\mathcal{L}_{2}(x)f(v),\;
    f(\mathcal{R}_{1}(x)v)=\mathcal{R}_{2}(x)f(v),\;
    \forall x\in A, v\in V_{1}.
\end{equation*}
\end{defi}

Let $(A,[-,-]_{A})$ be a Leibniz algebra, $V$  be a vector space and
    $\mathcal{L},\mathcal{R}:A\rightarrow\mathrm{End}(V)$ be linear maps. Then $(\mathcal{L},\mathcal{R},V)$ is a representation of the Leibniz algebra $(A,[-,-]_{A})$ if and only if the direct sum $A\oplus V$ of vector spaces is turned into a \textbf{(semi-direct product)} Leibniz algebra by defining the multiplication on $A\oplus V$ by
    \begin{equation}
        [x+u,y+v]=[x,y]_{A}+\mathcal{L}(x)v+\mathcal{R}(y)u,
        \;\;\forall x,y\in A, u,v\in V.
    \end{equation}

\begin{ex}
    Let $(A,[-,-]_{A})$ be a Leibniz algebra. Then $(\mathcal{L}_{A},\mathcal{R}_{A},A)$ is a representation of $(A,[-,-]_{A})$.
\end{ex}

Let $A$ and $V$ be vector spaces. For a linear map $f:A\rightarrow\mathrm{End}(V)$, we set a linear map $f^{*}:A\rightarrow\mathrm{End}(V^{*})$ by
\begin{equation}
    \langle f^{*}(x)u^{*}, v\rangle=-\langle u^{*},f(x)v\rangle,\;\;\forall x\in A, u^{*}\in V^{*},v\in V,
\end{equation}
where $\langle-,-\rangle$ is the ordinary pair between $V$ and $V^{*}$.

If $(\mathcal{L},\mathcal{R},V)$ is a representation of a Leibniz algebra $(A,[-,-]_{A})$, then it is known in \cite{ST} that $(\mathcal{L}^{*},-\mathcal{L}^{*}-\mathcal{R}^{*},V^{*})$ is also a representation of $(A,[-,-]_{A})$. In particular, $(\mathcal{L}_{A}^{*},-\mathcal{L}_{A}^{*}-\mathcal{R}_{A}^{*},A^{*})$ is a  representation of $(A,[-,-]_{A})$.

\begin{pro}
    Let $(A,[-,-]_{A},\Delta_{r})$ be a factorizable Leibniz bialgebra.
    Then $T_{(r-\sigma(r))}$ is a linear isomorphism giving the equivalence of $(\mathcal{L}_{A},\mathcal{R}_{A},A)$ and $(\mathcal{L}^{*}_{A},-\mathcal{L}^{*}_{A}-\mathcal{R}^{*}_{A},A^{*})$ as representations of $(A,[-,-]_{A})$.
\end{pro}
\begin{proof}
    Let $x\in A, a^{*},b^{*}\in A^{*}$. Then we have
    \begin{eqnarray*}
        \langle( (\mathcal{L}_{A}+\mathcal{R}_{A})(x)\otimes\mathrm{id})  (r-\sigma(r)), a^{*}\otimes b^{*}\rangle&=&-\langle (r-\sigma(r)), (\mathcal{L}^{*}_{A}+\mathcal{R}^{*}_{A})(x)a^{*}\otimes b^{*}\rangle\\
        &=&-\langle T_{(r-\sigma(r))}( (\mathcal{L}^{*}_{A}+\mathcal{R}^{*}_{A})(x)a^{*}  ), b^{*}\rangle,\\
        \langle(\mathrm{id}\otimes\mathcal{R}(x))(r-\sigma(r)),a^{*}\otimes b^{*}\rangle&=&-\langle(r-\sigma(r)), a^{*}\otimes \mathcal{R}^{*}_{A}(x) b^{*}\rangle\\
        &=&-\langle T_{(r-\sigma(r))}(a^{*}),\mathcal{R}^{*}_{A}(x) b^{*}\rangle\\
        &=&\langle \mathcal{R}_{A}(x) T_{(r-\sigma(r))}(a^{*}), b^{*}\rangle.
    \end{eqnarray*}
Then by Eq.~\eqref{s.i.}, we have
\begin{equation*}
T_{(r-\sigma(r))}((-\mathcal{L}^{*}_{A}-\mathcal{R}^{*}_{A})(x)a^{*})=\mathcal{R}_{A}(x)T_{(r-\sigma(r))}(a^{*}).
\end{equation*}
On the other hand, by Eq.~\eqref{s.i.}, we also have
\begin{equation*}
(\mathcal{L}_{A}(x)\otimes\mathrm{id}+\mathrm{id}\otimes\mathcal{L}_{A}(x))(r-\sigma(r))=0,
\end{equation*}
which gives
\begin{equation*}
    T_{(r-\sigma(r))}(\mathcal{L}^{*}_{A}(x)a^{*})=\mathcal{L}_{A}(x)T_{(r-\sigma(r))}(a^{*}).
\end{equation*}
Hence the conclusion follows.
\end{proof}}

Let $((A,[-,-]_{A}),(A^{*},[-,-]_{A^{*}}))$ be a Leibniz bialgebra. By \cite{ST}, there is a resulting Leibniz algebra structure on the double space $D=A\oplus A^{*}$ given by
    \begin{eqnarray*}
        [x+a^{*},y+b^{*}]_{D}&=&[x,y]_{A}+\mathcal{L}^{*}_{A^{*}}(a^{*})y-(\mathcal{L}^{*}_{A^{*}}+\mathcal{R}^{*}_{A^{*}})(b^{*})x\\
&&+ [a^{*},b^{*}]_{A^{*}}+\mathcal{L}^{*}_{A}(x)b^{*}-(\mathcal{L}^{*}_{A}+\mathcal{R}^{*}_{A})(y)a^{*},
    \end{eqnarray*}
for all $x,y\in A, a^{*}, b^{*}\in A^{*}$, which is called the \textbf{double Leibniz algebra}. Then we have the following result, which serves as a complement of Lemma \ref{lem:fact}.

\begin{pro}
    Let $(A,[-,-]_{A},\Delta_{r})$ be a factorizable Leibniz algebra.
    Then the double Leibniz algebra $(D,[-,-]_{D})$ is isomorphic to the direct sum $A\oplus A$ of Leibniz algebras.
\end{pro}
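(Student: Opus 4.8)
The plan is to exhibit an explicit linear isomorphism $\varphi\colon D=A\oplus A^{*}\to A\oplus A$ and then verify it intertwines $[-,-]_{D}$ with the direct-sum bracket. The natural guess, forced by Lemma~\ref{lem:fact}, is to send $A^{*}$ into $A\oplus A$ via $T_{r}\oplus T_{\sigma(r)}$ and to embed $A$ diagonally, i.e.
\begin{equation*}
\varphi(x+a^{*}) = \bigl(x+T_{r}(a^{*}),\, x+T_{\sigma(r)}(a^{*})\bigr),\qquad \forall\, x\in A,\ a^{*}\in A^{*}.
\end{equation*}
First I would check that $\varphi$ is bijective: if $\varphi(x+a^{*})=0$ then $x=-T_{r}(a^{*})=-T_{\sigma(r)}(a^{*})$, so $T_{r-\sigma(r)}(a^{*})=0$, hence $a^{*}=0$ by factorizability and then $x=0$; since $\dim D = 2\dim A = \dim(A\oplus A)$, injectivity gives bijectivity.

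Next I would compute $\varphi([x+a^{*},y+b^{*}]_{D})$ and compare it, componentwise, with $[\varphi(x+a^{*}),\varphi(y+b^{*})]_{A\oplus A} = \bigl([x+T_{r}(a^{*}),y+T_{r}(b^{*})]_{A},\,[x+T_{\sigma(r)}(a^{*}),y+T_{\sigma(r)}(b^{*})]_{A}\bigr)$. The $A$-component of the image needs the first slot to equal
\begin{equation*}
[x,y]_{A}+[x,T_{r}(b^{*})]_{A}+[T_{r}(a^{*}),y]_{A}+[T_{r}(a^{*}),T_{r}(b^{*})]_{A},
\end{equation*}
while $\varphi$ applied to the $D$-bracket puts into that slot the $A$-part of $[x+a^{*},y+b^{*}]_{D}$ plus $T_{r}$ applied to its $A^{*}$-part, namely
\begin{equation*}
[x,y]_{A}+\mathcal{L}^{*}_{A^{*}}(a^{*})y-(\mathcal{L}^{*}_{A^{*}}+\mathcal{R}^{*}_{A^{*}})(b^{*})x
+T_{r}\bigl([a^{*},b^{*}]_{A^{*}}+\mathcal{L}^{*}_{A}(x)b^{*}-(\mathcal{L}^{*}_{A}+\mathcal{R}^{*}_{A})(y)a^{*}\bigr).
\end{equation*}
Matching these amounts to the identities $T_{r}(\mathcal{L}^{*}_{A}(x)b^{*})=[x,T_{r}(b^{*})]_{A}$ and $T_{r}((\mathcal{L}^{*}_{A}+\mathcal{R}^{*}_{A})(y)a^{*})=-[T_{r}(a^{*}),y]_{A}$ modulo the bracket-of-images term, together with $T_{r}([a^{*},b^{*}]_{A^{*}})=[T_{r}(a^{*}),T_{r}(b^{*})]_{A}$. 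But $[a^{*},b^{*}]_{A^{*}}=[a^{*},b^{*}]_{r}$ since the Leibniz bialgebra is quasi-triangular, so the last identity is exactly the content of Corollary~\ref{cor:homo}, and the first two are precisely the "invariance" relations packaged in $F(x)(r-\sigma(r))$ and the compatibility equations already verified in Proposition~\ref{pro:matched pair} and Lemma~\ref{lem:dual mult}; concretely one uses Eq.~\eqref{eq:dual mul} to rewrite $[a^{*},b^{*}]_{r}$ and Eq.~\eqref{s.i.} to convert $(\mathcal{L}^{*}_{A}+\mathcal{R}^{*}_{A})$-terms acting after $T_{r}$ into $(\mathcal{L}^{*}_{A}+\mathcal{R}^{*}_{A})$-terms acting after $T_{\sigma(r)}$, which is what makes the cross terms telescope. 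The $A^{*}$-component is handled identically with $T_{\sigma(r)}$ in place of $T_{r}$, invoking that $T_{\sigma(r)}$ is also a homomorphism (Corollary~\ref{cor:homo}) and that $\Delta_{\sigma(r)}$ satisfies the same structural equations by Proposition~\ref{pro:2.12}.

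The main obstacle I anticipate is purely bookkeeping: the $D$-bracket mixes $\mathcal{L}^{*}_{A^{*}},\mathcal{R}^{*}_{A^{*}}$ (coming from $\Delta_{r}$, hence expressible through $T_{r},T_{\sigma(r)}$ via Eq.~\eqref{eq:dual mul}) with $\mathcal{L}^{*}_{A},\mathcal{R}^{*}_{A}$, so one must carefully substitute the dual-bracket formula, dualize each $\mathcal{L}^{*}_{A^{*}}(a^{*})y$ type term, and check that every summand produced is absorbed either into a bracket $[T_{r}(a^{*}),T_{r}(b^{*})]_{A}$-type term or cancels against its partner in the diagonal embedding. I would organize this by testing equality against an arbitrary covector $c^{*}\in A^{*}$ (resp. pairing with $A^{*}$ for the second component) exactly as in the proof of Proposition~\ref{pro:CLYBE1}, so that all maps become tensor contractions of $r$ and the Leibniz multiplication; the needed cancellations are then the same ones already established there and in Proposition~\ref{pro:coalg}, so no genuinely new identity is required beyond Eq.~\eqref{s.i.}, the CLYBE, and Corollary~\ref{cor:homo}.
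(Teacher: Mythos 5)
Your proposal is correct and follows essentially the same route as the paper: the paper's proof uses exactly the map $\theta(x+a^{*})=(x+T_{r}(a^{*}),\,x+T_{\sigma(r)}(a^{*}))$, proves bijectivity from the invertibility of $T_{r-\sigma(r)}$, handles the $A^{*}$-$A^{*}$ brackets via Corollary~\ref{cor:homo}, and verifies the mixed brackets by pairing against covectors and invoking Eq.~\eqref{s.i.} together with the dual-bracket formula, just as you outline. The only caveat is that the two mixed-term identities you list do not hold separately but only in the combined form $T_{r}(\mathcal{L}^{*}_{A}(x)a^{*})-(\mathcal{L}^{*}_{A^{*}}+\mathcal{R}^{*}_{A^{*}})(a^{*})x=[x,T_{r}(a^{*})]_{A}$, which is exactly the telescoping you anticipate.
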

\begin{proof}
    Define a linear map $\theta:D=A\oplus A^{*}\rightarrow A\oplus A$  by
    \begin{equation*}
        \theta(x)=(x,x), \;\theta(a^{*})=(T_{r}(a^{*}),T_{\sigma(r)}(a^{*})),\;\forall x\in A, a^{*}\in A^{*}.
    \end{equation*}
Since $T_{r}-T_{\sigma(r)}$ is a linear isomorphism, we deduce that $\theta$ is a bijective linear map.
It is clear that $\theta|_{A}$ is an embedding of Leibniz algebras. By Corollary \ref{cor:homo}, $\theta|_{A^{*}}$ is a homomorphism of Leibniz algebras.
Moreover, for all $x\in A, a^{*}\in A^{*}$, we have
\begin{eqnarray*}
    \theta([x,a^{*}]_{D})&=&\theta( -(\mathcal{L}^{*}_{A^{*}}+\mathcal{R}^{*}_{A^{*}})(a^{*})x+\mathcal{L}^{*}_{A}(x)a^{*} )\\
    &=&(T_{r}(\mathcal{L}^{*}_{A}(x)a^{*})-(\mathcal{L}^{*}_{A^{*}}+\mathcal{R}^{*}_{A^{*}})(a^{*})x,T_{\sigma(r)}(\mathcal{L}^{*}_{A}(x)a^{*})-(\mathcal{L}^{*}_{A^{*}}+\mathcal{R}^{*}_{A^{*}})(a^{*})x).
\end{eqnarray*}
Thus for all $b^{*}\in A^{*}$, we have
\begin{eqnarray*}
\langle T_{r}(\mathcal{L}^{*}_{A}(x)a^{*}),b^{*}\rangle&=&\langle r,\mathcal{L}^{*}_{A}(x)a^{*}\otimes b^{*}\rangle=-\langle (\mathcal{L}_{A}(x)\otimes\mathrm{id})r, a^{*}\otimes b^{*}\rangle,\\
-\langle\mathcal{L}^{*}_{A^{*}}(a^{*})x,b^{*}\rangle&=&\langle x,[a^{*},b^{*}]_{r}\rangle=\langle\Delta_{r}(x),a^{*}\otimes b^{*}\rangle\\
&=&\langle ((\mathcal{L}_{A}+\mathcal{R}_{A})(x)\otimes\mathrm{id}-\mathrm{id}\otimes\mathcal{R}_{A}(x))r, a^{*}\otimes b^{*}\rangle,\\
-\langle\mathcal{R}^{*}_{A^{*}}(a^{*})x,b^{*}\rangle&=&\langle x,[b^{*},a^{*}]_{r}\rangle=\langle ((\mathcal{L}_{A}+\mathcal{R}_{A})(x)\otimes\mathrm{id}-\mathrm{id}\otimes\mathcal{R}_{A}(x))r, b^{*}\otimes a^{*}\rangle\\
&\overset{\eqref{s.i.}}{=}&\langle ((\mathcal{L}_{A}+\mathcal{R}_{A})(x)\otimes\mathrm{id}-\mathrm{id}\otimes\mathcal{R}_{A}(x))\sigma(r), b^{*}\otimes a^{*}\rangle\\
&=&\langle (\mathrm{id}\otimes(\mathcal{L}_{A}+\mathcal{R}_{A})(x)-\mathcal{R}_{A}(x)\otimes \mathrm{id})r, a^{*}\otimes b^{*}\rangle.
\end{eqnarray*}
Hence we have
\begin{eqnarray*}
    \langle T_{r}(\mathcal{L}^{*}_{A}(x)a^{*})-(\mathcal{L}^{*}_{A^{*}}+\mathcal{R}^{*}_{A^{*}})(a^{*})x, b^{*}\rangle&=&\langle(\mathrm{id}\otimes\mathcal{L}_{A}(x))r,a^{*}\otimes b^{*}\rangle=-\langle r,a^{*}\otimes\mathcal{L}^{*}_{A}(x)b^{*}\rangle\\
    &=&-\langle T_{r}(a^{*}),\mathcal{L}^{*}_{A}(x)b^{*}\rangle=\langle [x,T_{r}(a^{*})]_{A},b^{*}\rangle,
\end{eqnarray*}
that is,
\begin{equation*}
T_{r}(\mathcal{L}^{*}_{A}(x)a^{*})-(\mathcal{L}^{*}_{A^{*}}+\mathcal{R}^{*}_{A^{*}})(a^{*})x=[x,T_{r}(a^{*})]_{A}.
\end{equation*}
In the same way, we have
\begin{equation*}
    T_{\sigma(r)}(\mathcal{L}^{*}_{A}(x)a^{*})-(\mathcal{L}^{*}_{A^{*}}+\mathcal{R}^{*}_{A^{*}})(a^{*})x=[x,T_{\sigma(r)}(a^{*})]_{A}.
\end{equation*}
Thus we have
\begin{equation*}
    \theta([x,a^{*}]_{D})=([x,T_{r}(a^{*})]_{A},[x,T_{\sigma(r)}(a^{*})]_{A})=[\theta(x),\theta(a^{*})],
\end{equation*}
and similarly
\begin{equation*}
\theta([a^{*},x]_{D})=[\theta(a^{*}),\theta(x)].
\end{equation*}
In conclusion, $\theta:D\rightarrow A\oplus A$ is an isomorphism of Leibniz algebras. The proof is finished.
\end{proof}


In the following, we show that there is a factorizable Leibniz bialgebra structure on the double space of an arbitrary Leibniz bialgebra.

\begin{thm}
    Let $((A,[-,-]_{A}),(A^{*},[-,-]_{A^{*}}))$ be a Leibniz bialgebra. Suppose that
 $\{e_{1}$, $\cdots$, $e_{n}\}$ is a basis of $A$ and  $\{e^{*}_{1},\cdots,e^{*}_{n}\}$ is the dual basis. Set $$r=\sum\limits_{i=1}^{n}e_{i}\otimes e^{*}_{i}\in A\otimes A^{*}\subset D\otimes D.$$ Then $(D,[-,-]_{D},\Delta_{r})$ with $\Delta_r$ defined by  Eq.~\eqref{eq:cob} is a factorizable Leibniz bialgebra.
\end{thm}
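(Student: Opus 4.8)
The plan is to verify that the canonical element $r=\sum_{i=1}^n e_i\otimes e_i^*\in A\otimes A^*\subset D\otimes D$ satisfies the two hypotheses of Corollary \ref{cor:quasitr} and the Definition of a quasi-triangular Leibniz bialgebra---namely that $r$ solves the CLYBE in $(D,[-,-]_D)$ and that its skew-symmetric part $\tfrac12(r-\sigma(r))$ is invariant, i.e. $F_D(\xi)(r-\sigma(r))=0$ for all $\xi\in D$---and then to check that the resulting operator $T_{r-\sigma(r)}:D^*\to D$ is a linear isomorphism. The first and third points are the substance of the statement; the quasi-triangular (hence Leibniz bialgebra) conclusion is then automatic from Corollary \ref{cor:quasitr}.

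First I would compute $\sigma(r)=\sum_i e_i^*\otimes e_i$, so that $r-\sigma(r)=\sum_i(e_i\otimes e_i^*-e_i^*\otimes e_i)$, and observe that the associated operator $T_{r-\sigma(r)}:D^*=A^*\oplus A\to D=A\oplus A^*$ is, up to sign, the identification $A^*\oplus A\to A\oplus A^*$ swapping the two summands; it is manifestly a linear isomorphism. This disposes of factorizability immediately, \emph{provided} we already know $(D,[-,-]_D,\Delta_r)$ is quasi-triangular. So the real work is the CLYBE and the invariance condition. For the invariance condition \eqref{s.i.}, I would unwind $F_D(\xi)(r-\sigma(r))=0$ using the explicit double bracket $[-,-]_D$: writing $\xi=x+a^*$ and pairing against an arbitrary $b^*+y\in D^*$, the condition $F_D(\xi)(r-\sigma(r))=0$ becomes a pair of tensor identities that, after using $\mathcal{L}_D,\mathcal{R}_D$ restricted to the components, reduce precisely to the mixed structure-constant relations defining $[-,-]_D$ (equivalently, to the statement that $r$ corresponds to the identity map $A^*\to A^*$ intertwining the $A$-action and the dual $A^*$-coaction). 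This is the computation that encodes why $r$ must be the \emph{canonical} element and not an arbitrary one; I expect it to be a bookkeeping exercise in the defining relations of a Leibniz bialgebra rather than anything conceptually new.

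The main obstacle will be verifying $[[r,r]]=0$ in $D\otimes D\otimes D$. Here the key simplification is that $r\in A\otimes A^*$, so in every term of \eqref{eq:admissible} one of the two tensor legs being bracketed lies in $A$ and the other in $A^*$; hence all the brackets appearing are the mixed terms of $[-,-]_D$, which are built from $\mathcal{L}_A^*,\mathcal{R}_A^*$ on $A^*$ and $\mathcal{L}_{A^*}^*,\mathcal{R}_{A^*}^*$ on $A$. Expanding $[[r,r]]$ and grouping by which tensor factor lands in $A$ versus $A^*$, each homogeneous component of the resulting identity is equivalent to one of the compatibility axioms \eqref{eq:defi:Leibniz bialg1}, \eqref{eq:defi:Leibniz bialg2}, the coassociativity \eqref{eq:defi:Leibniz co} of $\Delta_{A^*}$ (dual to the Jacobi-type identity for $[-,-]_{A^*}$), or the Leibniz identity \eqref{eq:defi:Leibniz} for $[-,-]_A$ itself. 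In other words, $[[r,r]]=0$ is a repackaging of the full set of Leibniz bialgebra axioms for $((A,[-,-]_A),(A^*,[-,-]_{A^*}))$. I would organize this by choosing a basis, writing structure constants $[e_i,e_j]_A=\sum_k c_{ij}^k e_k$ and $\Delta_A(e_k)=\sum_{i,j} d^{ij}_k e_i\otimes e_j$ (so $[e_i^*,e_j^*]_{A^*}=\sum_k d^{ij}_k e_k^*$), substituting into $[[r,r]]$, and matching coefficients; the vanishing in each of the four "types" of tensor position is then exactly one axiom.

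Finally, once $[[r,r]]=0$ and \eqref{s.i.} are established, Corollary \ref{cor:quasitr} gives that $(D,[-,-]_D,\Delta_r)$ is a Leibniz bialgebra with $\Delta_r$ defined by \eqref{eq:cob}, and the Definition after it gives that it is quasi-triangular; combined with the isomorphism $T_{r-\sigma(r)}$ observed above, it is factorizable. As a sanity check one could also note this recovers, under the forgetful passage to Lie algebras, the classical fact that the Drinfeld double of a Lie bialgebra is factorizable, which is reassuring that the computation will close.
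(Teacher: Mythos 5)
Your proposal is correct and follows essentially the same route as the paper: a direct verification that $F(\xi)(r-\sigma(r))=0$ and $[[r,r]]=0$ using the explicit form of the double bracket $[-,-]_{D}$, followed by the observation that $T_{r}(a^{*}+x)=a^{*}$ and $T_{\sigma(r)}(a^{*}+x)=x$, so $T_{r-\sigma(r)}(a^{*}+x)=a^{*}-x$ is invertible. One small caveat: once $[-,-]_{D}$ is written in terms of $\mathcal{L}^{*}_{A},\mathcal{R}^{*}_{A},\mathcal{L}^{*}_{A^{*}},\mathcal{R}^{*}_{A^{*}}$, both $[[r,r]]=0$ and the invariance of $r-\sigma(r)$ collapse by formal dual-basis cancellations rather than decomposing component-by-component into the bialgebra axioms (those axioms are consumed earlier, in guaranteeing that $[-,-]_{D}$ is a Leibniz bracket), but this does not affect the validity of your plan.
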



\begin{proof}
    We first prove $(D,[-,-]_{D},\Delta_{r})$ is a quasi-triangular Leibniz bialgebra.
    For $x\in A$, we have
    \begin{eqnarray*}
        F(x)(r-\sigma(r))&=&\sum_{i}(-e_{i}\otimes[e^{*}_{i},x]_{D}+e^{*}_{i}\otimes[e_{i},x]_{A}+[x,e_{i}]_{A}\otimes e^{*}_{i}\\
        &&+[e_{i},x]_{A}\otimes e^{*}_{i}-[x,e^{*}_{i}]_{D}\otimes e_{i}-[e^{*}_{i},x]_{D}\otimes e_{i})\\
        &=&\sum_{i}(-e_{i}\otimes\mathcal{L}^{*}_{A^{*}}(e^{*}_{i})x+e_{i}\otimes(\mathcal{L}^{*}_{A}+\mathcal{R}^{*}_{A})(x)e^{*}_{i}+e^{*}_{i}\otimes[e_{i},x]_{A}\\
        &&+[x,e_{i}]_{A}\otimes e^{*}_{i}+[e_{i},x]_{A}\otimes e^{*}_{i}+\mathcal{R}^{*}_{A}(x)e^{*}_{i}\otimes e_{i}+\mathcal{R}^{*}_{A^{*}}(e^{*}_{i})x\otimes e_{i}).
    \end{eqnarray*}
    Observing that
    \begin{eqnarray*}
        &&\sum_{i}(-e_{i}\otimes\mathcal{L}^{*}_{A^{*}}(e^{*}_{i})x+\mathcal{R}^{*}_{A^{*}}(e^{*}_{i})x\otimes e_{i})=0,\\
        &&\sum_{i}(e_{i}\otimes(\mathcal{L}^{*}_{A}+\mathcal{R}^{*}_{A})(x)e^{*}_{i}+[e_{i},x]_{A}\otimes e^{*}_{i}+[x,e_{i}]_{A}\otimes e^{*}_{i})=0,\\
        &&\sum_{i}(e^{*}_{i}\otimes[e_{i},x]_{A}+\mathcal{R}^{*}_{A}(x)e^{*}_{i}\otimes e_{i})=0,
    \end{eqnarray*}
    we finally get $F(x)(r-\sigma(r))=0$. By duality, we also have $F(a^{*})(r-\sigma(r))=0,$ for all $a^{*}\in A^{*}$. Thus $r$ satisfies Eq.~\eqref{s.i.}.
    Furthermore,
    \begin{eqnarray*}
        [[r,r]]&=&\sum_{i,j}([e_{i},e_{j}]_{A}\otimes e^{*}_{i}\otimes e^{*}_{j}-e_{i}\otimes ([e_{j},e^{*}_{i}]_{D}+[e^{*}_{i},e_{j}]_{D})\otimes e^{*}_{j}+e_{i}\otimes e_{j}\otimes [e^{*}_{j},e^{*}_{i}]_{A^{*}})\\
        &=& \sum_{i,j}([e_{i},e_{j}]_{A}\otimes e^{*}_{i}\otimes e^{*}_{j}-e_{i}\otimes(\mathcal{L}^{*}_{A}(e_{j})e^{*}_{i}-(\mathcal{L}^{*}_{A^{*}}+\mathcal{R}^{*}_{A^{*}})(e^{*}_{i})e_{j}  )\otimes e^{*}_{j}\\
        &&-e_{i}\otimes(\mathcal{L}^{*}_{A^{*}}(e^{*}_{i})e_{j}-(\mathcal{L}^{*}_{A}+\mathcal{R}^{*}_{A})(e_{j})e^{*}_{i} )\otimes e^{*}_{j}+e_{i}\otimes e_{j}\otimes[e^{*}_{j},e^{*}_{i}]_{A^{*}} )\\
        &=&0,
    \end{eqnarray*}
    that is, $r$ is a solution of the CLYBE in $(D,[-,-]_{D})$.
    Hence $(D,[-,-]_{D},\Delta_{r})$ is a quasi-triangular Leibniz bialgebra.
    Moreover, the linear maps $T_{r},T_{\sigma(r)}:D^{*}\rightarrow D$ are respectively given by
    \begin{equation*}
        T_{r}(a^{*}+x)=a^{*},\;\; T_{\sigma(r)}(a^{*}+x)=x,\;\;\forall x\in A, a^{*}\in A^{*},
    \end{equation*}
which implies that $T_{r-\sigma(r)}(a^{*}+x)=a^{*}-x$, and   $T_{r-\sigma(r)}$ is a linear isomorphism. Hence $(D,[-,-]_{D},\Delta_{r})$ is a factorizable Leibniz bialgebra.
\end{proof}

\section{Relative Rota-Baxter operators and skew-symmetric quadratic Rota-Baxter Leibniz algebras}\label{sec4}

In this section, we further study  operator forms of solutions of
the CLYBE, which draw inspiration from \cite{BGN,KM}. We show that
relative Rota-Baxter operators with weights on Leibniz algebras
can be used to characterize solutions of the CLYBE whose
skew-symmetric parts are invariant. {{On skew-symmetric quadratic
Leibniz algebras, such operators correspond to linear
transformations satisfying the Rota-Baxter type condition (see
Eq.~\eqref{eq:pro3,1} in Proposition \ref{pro1-6}). Moreover, the
notion of skew-symmetric quadratic Rota-Baxter Leibniz algebras is
introduced, which  give rise to triangular Leibniz bialgebras in
the case of weight $0$, and are in one-to-one correspondence with
factorizable Leibniz bialgebras in the case of nonzero weights. }}
\delete{ {{
        A solution $r\in A\otimes A$ of the CLYBE in a Leibniz algebra $(A,[-,-]_{A})$ can be characterized as a relative Rota-Baxter operator $T_{r}\in\mathrm{Hom}(A^{*},A)$ with a weight on $(A,[-,-]_{A})$.
        Supported from the linear isomorphism $\omega^{\sharp}\in \mathrm{Hom}(A,A^{*})$ given by a skew-symmetric quadratic Leibniz algebra $(A,[-,-]_{A},\omega)$, such operator $T_{r}$ leads to a Rota-Baxter type operator $\beta=T_{r}\omega^{\sharp}\in\mathrm{End}(A)$.
    Then we introduce the notion of skew-symmetric quadratic Rota-Baxter Leibniz algebras, which equivalently characterize the quadruple $(A,[-,-]_{A},\omega,\beta)$ when $r$ and $\omega$ satisfy further relations.
        Explicitly, skew-symmetric quadratic Rota-Baxter Leibniz algebras of weight $0$
        give  rise to triangular Leibniz bialgebras,
        whereas there is a one-to-one correspondence between skew-symmetric quadratic Rota-Baxter Leibniz algebras of nonzero weights and factorizable Leibniz bialgebras.}}

    {\gl{I spent too much space to explain these relations, but the effect is still not so well.}}
\delete{
We show that relative Rota-Baxter operators with weights on Leibniz algebras can be used to characterize
solutions of the CLYBE whose skew-symmetric parts are invariant. Such operators lead to the Rota-Baxter
type operators under some conditions. In particular, we introduce the notion of skew-symmetric quadratic Rota-Baxter Leibniz algebras, and show that a
skew-symmetric quadratic Rota-Baxter Leibniz algebra of weight $0$
gives rise to a triangular Leibniz bialgebra,
whereas there is a one-to-one correspondence between skew-symmetric quadratic Rota-Baxter Leibniz algebras of nonzero weights and factorizable Leibniz bialgebras.}
}

\subsection{Relative Rota-Baxter operators and the classical Leibniz Yang-Baxter equation}\label{sec4.1}\

We recall the notions of representations of Leibniz algebras and $A$-Leibniz algebras.

\begin{defi}
    A \textbf{representation} of a Leibniz algebra $(A,[-,-]_{A})$ is a triple $(\mathnormal{l},\mathnormal{r},V)$, where $V$ is a vector space and $\mathnormal{l},\mathnormal{r}:A\rightarrow\mathrm{End}(V)$ are linear maps such that the following equations hold:
    \begin{eqnarray}
        \mathnormal{l}([x,y]_{A})&=&\mathnormal{l}(x)\mathnormal{l}(y)-\mathnormal{l}(y)\mathnormal{l}(x),\label{eq:Leibniz rep1}\\
        \mathnormal{r}([x,y]_{A})&=&\mathnormal{l}(x)\mathnormal{r}(y)-\mathnormal{r}(y)\mathnormal{l}(x),\label{eq:Leibniz rep2}\\
        \mathnormal{r}(y)\mathnormal{l}(x)&=&-\mathnormal{r}(y)\mathnormal{r}(x),\label{eq:Leibniz rep3}
    \end{eqnarray}
for all $x,y\in A$.
\end{defi}


Let $(A,[-,-]_{A})$ be a Leibniz algebra, $V$  be a vector space and
$\mathnormal{l},\mathnormal{r}:A\rightarrow\mathrm{End}(V)$ be linear maps. Then $(\mathnormal{l},\mathnormal{r},V)$ is a representation of the Leibniz algebra $(A,[-,-]_{A})$ if and only if the direct sum $A\oplus V$ of vector spaces is turned into a \textbf{(semi-direct product)} Leibniz algebra by defining the multiplication on $A\oplus V$ by
\begin{equation}
    [x+u,y+v]=[x,y]_{A}+\mathnormal{l}(x)v+\mathnormal{r}(y)u,
    \;\;\forall x,y\in A, u,v\in V.
\end{equation}
We denote this Leibniz algebra structure by $A\ltimes_{\mathnormal{l},\mathnormal{r}}V$.

\begin{ex}
    Let $(A,[-,-]_{A})$ be a Leibniz algebra. Then $(\mathcal{L}_{A},\mathcal{R}_{A},A)$ is a representation of $(A,[-,-]_{A})$.
\end{ex}

If $(\mathnormal{l},\mathnormal{r},V)$ is a representation of a Leibniz algebra $(A,[-,-]_{A})$, then it is known in \cite{ST} that $(\mathnormal{l}^{*},-\mathnormal{l}^{*}-\mathnormal{r}^{*},V^{*})$ is also a representation of $(A,[-,-]_{A})$. In particular, $(\mathcal{L}_{A}^{*},-\mathcal{L}_{A}^{*}-\mathcal{R}_{A}^{*},A^{*})$ is a  representation of $(A,[-,-]_{A})$.

\delete{
\begin{lem}\label{lem:4.1}
    Let $(A,[-,-]_{A})$ be a Leibniz algebra.
    Let $r\in A\otimes A$ and $x\in A, a^{*},b^{*}\in A^{*}$.
    Then the following conclusions hold:
    \begin{enumerate}
        \item $r$ is invariant if and only if the following equation holds:
        \begin{equation}\label{eq:lem1}
        [T_{r}(a^{*}),x]_{A}+T_{r}((\mathcal{L}^{*}_{A}+\mathcal{R}^{*}_{A})(x)a^{*})=0.
        \end{equation}
       \item If $r$ is skew-symmetric, then $r$ is invariant if and only if the following equation holds:
       \begin{equation}\label{eq:lem2}
        \mathcal{L}^{*}_{A}(T_{r}(a^{*}))b^{*}+
        (\mathcal{L}^{*}_{A}+\mathcal{R}^{*}_{A})(T_{r}(b^{*}))a^{*}=0.
       \end{equation}
    \end{enumerate}
    In particular, if $r$ is skew-symmetric and  invariant, then the following equations hold:
    \begin{eqnarray}
        \mathcal{R}^{*}_{A}(T_{r}(a^{*}))b^{*}-
        \mathcal{R}^{*}_{A}(T_{r}(b^{*}))a^{*}&=&0,\label{eq:lem3}\\
    (\mathcal{L}_{A}(x)\otimes\mathrm{id}+\mathrm{id}\otimes\mathcal{L}_{A}(x))r&=&0,\label{eq:lem4}\\
\left[x,T_{r}(a^{*})\right]_{A}-T_{r}(\mathcal{L}^{*}_{A}(x)a^{*})&=&0.\label{eq:lem5}
    \end{eqnarray}
\end{lem}
\begin{proof}
For all $x\in A, a^{*},b^{*}\in A^{*}$, we have
\begin{eqnarray*}
&&\langle [T_{r}(a^{*}),x]_{A}+T_{r}((\mathcal{L}^{*}_{A}+\mathcal{R}^{*}_{A})(x)a^{*}),b^{*}\rangle\\
&&=\langle r, -a^{*}\otimes\mathcal{R}^{*}_{A}(x)b^{*}+(\mathcal{L}^{*}_{A}+\mathcal{R}^{*}_{A})(x)a^{*}\otimes b^{*}\rangle\\
&&=\langle  (\mathrm{id}\otimes\mathcal{R}_{A}(x)-(\mathcal{L}_{A}+\mathcal{R}_{A})(x)\otimes\mathrm{id})r, a^{*}\otimes b^{*}\rangle\\
&&\overset{\eqref{eq:inv}}{=}0.
\end{eqnarray*}
Hence $r$ is invariant if and only if Eq.~\eqref{eq:lem1} holds.
Moreover, if $r$  is skew-symmetric, then we have
\begin{eqnarray*}
&&\langle \mathcal{L}^{*}_{A}(T_{r}(a^{*}))b^{*}+(\mathcal{L}^{*}_{A}+\mathcal{R}^{*}_{A})(T_{r}(b^{*}))a^{*},x\rangle\\
&&=-\langle b^{*},[T_{r}(a^{*}),x]_{A}\rangle-\langle a^{*},[T_{r}(b^{*}),x]_{A}+[x,T_{r}(b^{*})]_{A}\rangle\\
&&=\langle T_{r}(a^{*}),\mathcal{R}^{*}_{A}(x)b^{*}\rangle+\langle T_{r}(b^{*}),(\mathcal{L}^{*}_{A}+\mathcal{R}^{*}_{A})(x)a^{*}\rangle\\
&&=\langle r, a^{*}\otimes\mathcal{R}^{*}_{A}(x)b^{*}-(\mathcal{L}^{*}_{A}+\mathcal{R}^{*}_{A})(x)a^{*}\otimes b^{*}\rangle\\
&&=\langle  (-\mathrm{id}\otimes\mathcal{R}_{A}+(\mathcal{L}_{A}+\mathcal{R}_{A})(x)\otimes\mathrm{id})r, a^{*}\otimes b^{*}\rangle\\
&&\overset{\eqref{eq:inv}}{=}0.
\end{eqnarray*}
Hence $r$ is invariant if and only if Eq.~\eqref{eq:lem2} holds. In particular, if $r$ is skew-symmetric and  invariant, then by Eq.~\eqref{eq:lem2}, we get
Eq.~\eqref{eq:lem3}, which gives Eq.~\eqref{eq:lem4} and Eq.~\eqref{eq:lem5}.
\end{proof}}

Following the notion of $\mathfrak{g}$-Lie algebras \cite{BGN,KN}, we introduce the notion of $A$-Leibniz algebras.


\begin{defi}
    Let $(A,[-,-]_{A})$ and $(V,[-,-]_{V})$ be Leibniz algebras and $\mathnormal{l},\mathnormal{r}:A\rightarrow\mathrm{End}(V)$ be linear maps. If $(\mathnormal{l},\mathnormal{r},V)$ is a representation of $(A,[-,-]_{A})$ and the following equations hold:
    \begin{equation}\label{bim alg1}
        \mathnormal{l}(x)[u,v]_{V}=[\mathnormal{l}(x)u,v]_{V}+[u,\mathnormal{l}(x)v]_{V},
    \end{equation}
\begin{equation}\label{bim alg2}
[u,\mathnormal{l}(x)v]_{V}=[\mathnormal{r}(x)u,v]_{V}+\mathnormal{l}(x)[u,v]_{V},
\end{equation}
\begin{equation}\label{bim alg3}
[u,\mathnormal{r}(x)v]_{V}=\mathnormal{r}(x)[u,v]_{V}+[v,\mathnormal{r}(x)u]_{V},
\end{equation}
for all $x\in A, u,v\in V$, then we say the quadruple $(\mathnormal{l},\mathnormal{r},V,[-,-]_{V})$ is an \textbf{$A$-Leibniz algebra}.
\end{defi}

\begin{rmk}
    In \cite{Das}, the notion of an $A$-Leibniz algebra is also called a Leibniz $A$-representation.
    Similar to representations of Leibniz algebras, an $A$-Leibniz algebra also has an equivalent characterization in terms of the Leibniz algebra structure on the direct sum $A\oplus V$ of vector spaces. Here the multiplication is given by
    \begin{equation}
        [x+u,y+v]=[x,y]_{A}+\mathnormal{l}(x)v+\mathnormal{r}(y)u+[u,v]_{V},\;\;\forall x,y\in A, u,v\in V.
    \end{equation}
\end{rmk}

\begin{pro}\label{pro ss inv Leibniz}
    Let $(A,[-,-]_{A})$ be a Leibniz algebra and $r\in A\otimes A$ be skew-symmetric and invariant. Set a multiplication $\circ_{r}:A^{*}\otimes A^{*}\rightarrow A^{*}$ by
    \begin{equation}\label{eq:leib on A^{*}}
        a^{*}\circ_{r}b^{*}=\mathcal{L}^{*}_{A}(T_{r}(a^{*}))b^{*},\;\;\forall a^{*},b^{*}\in A^{*}.
    \end{equation}
Then $(\mathcal{L}^{*}_{A },-\mathcal{L}^{*}_{A }-\mathcal{R}^{*}_{A },A^{*},\circ_{r})$ is an $A$-Leibniz algebra.
\end{pro}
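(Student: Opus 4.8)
The statement requires establishing two things: that $(A^*,\circ_r)$ is a Leibniz algebra, and that the compatibility identities \eqref{bim alg1}--\eqref{bim alg3} hold with left action $\mathcal{L}_A^*$, right action $-\mathcal{L}_A^*-\mathcal{R}_A^*$, underlying space $A^*$, and bracket $\circ_r$ (where by definition $a^*\circ_r b^*=\mathcal{L}_A^*(T_r(a^*))b^*$, with $T_r\colon A^*\to A$ as in \eqref{eq:id}); the triple $(\mathcal{L}_A^*,-\mathcal{L}_A^*-\mathcal{R}_A^*,A^*)$ is already known to be a representation of $(A,[-,-]_A)$. The plan is to first convert the two hypotheses on $r$ into operator identities for $T_r$, and then deduce everything formally from those together with the representation axioms \eqref{eq:Leibniz rep1}--\eqref{eq:Leibniz rep3} and the identity \eqref{eq:defi:Leibniz2}. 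A routine pairing computation, using the definition \eqref{eq:F} of $F$ and the identification \eqref{eq:id}, rewrites the invariance condition $F(x)r=0$ as
\begin{equation*}
[T_r(a^*),x]_A+T_r\big((\mathcal{L}_A^*+\mathcal{R}_A^*)(x)a^*\big)=0,\qquad\forall\,x\in A,\ a^*\in A^*,\tag{i}
\end{equation*}
while skew-symmetry of $r$ is precisely $T_{\sigma(r)}=-T_r$, i.e.\ $\langle T_r(a^*),b^*\rangle=-\langle T_r(b^*),a^*\rangle$. A single further pairing computation, feeding this skew-symmetry into (i), yields
\begin{equation*}
\mathcal{L}_A^*(T_r(a^*))b^*+(\mathcal{L}_A^*+\mathcal{R}_A^*)(T_r(b^*))a^*=0,\qquad\forall\,a^*,b^*\in A^*,\tag{ii}
\end{equation*}
and then (i) specialized to $x=T_r(b^*)$, combined with (ii), gives the homomorphism identity
\begin{equation*}
[T_r(a^*),T_r(b^*)]_A=T_r(a^*\circ_r b^*),\qquad\forall\,a^*,b^*\in A^*.\tag{iii}
\end{equation*}

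Granting (i)--(iii), the remaining verifications are bookkeeping. First I would check that $(A^*,\circ_r)$ is a Leibniz algebra: expanding $a^*\circ_r(b^*\circ_r c^*)-(a^*\circ_r b^*)\circ_r c^*-b^*\circ_r(a^*\circ_r c^*)$ and using the left-action axiom \eqref{eq:Leibniz rep1} for $\mathcal{L}_A^*$, the first and third terms combine and the whole expression collapses to $\mathcal{L}_A^*\big([T_r(a^*),T_r(b^*)]_A-T_r(a^*\circ_r b^*)\big)c^*$, which vanishes by (iii). Condition \eqref{bim alg2}: collecting the two $\mathcal{L}_A^*(x)$-terms and applying \eqref{eq:Leibniz rep1} reduces it to $\mathcal{L}_A^*\big([T_r(u^*),x]_A\big)v^*=\mathcal{L}_A^*\big(T_r(-(\mathcal{L}_A^*+\mathcal{R}_A^*)(x)u^*)\big)v^*$, which is immediate from (i). Condition \eqref{bim alg3}: the parallel manipulation with \eqref{eq:Leibniz rep2} reduces it to $(\mathcal{L}_A^*+\mathcal{R}_A^*)\big([T_r(u^*),x]_A\big)v^*=\mathcal{L}_A^*(T_r(v^*))\big((\mathcal{L}_A^*+\mathcal{R}_A^*)(x)u^*\big)$; replacing $[T_r(u^*),x]_A$ via (i) and then invoking (ii) closes this case.

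The step I expect to be the main obstacle is \eqref{bim alg1}. Collecting terms and applying \eqref{eq:Leibniz rep1} reduces it to showing
\[
\mathcal{L}_A^*\big([x,T_r(u^*)]_A-T_r(\mathcal{L}_A^*(x)u^*)\big)=0\quad\text{on }A^*,
\]
where, in contrast with \eqref{bim alg2}, the argument of $\mathcal{L}_A^*$ need not itself vanish. The plan is to show it lies in $Z(A):=\{z\in A\mid [z,y]_A=0\ \text{for all }y\in A\}$, on which $\mathcal{L}_A^*$ clearly vanishes (since $\langle\mathcal{L}_A^*(z)a^*,y\rangle=-\langle a^*,[z,y]_A\rangle$). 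Indeed, \eqref{eq:defi:Leibniz2} gives $[x,T_r(u^*)]_A+[T_r(u^*),x]_A\in Z(A)$; combining this with $[T_r(u^*),x]_A=-T_r\big((\mathcal{L}_A^*+\mathcal{R}_A^*)(x)u^*\big)$ from (i) yields
\[
[x,T_r(u^*)]_A-T_r(\mathcal{L}_A^*(x)u^*)\equiv T_r\big(\mathcal{R}_A^*(x)u^*\big)\pmod{Z(A)}.
\]
Finally $T_r(\mathcal{R}_A^*(x)u^*)\in Z(A)$: applying (i) with $a^*=\mathcal{R}_A^*(x)u^*$ gives $[T_r(\mathcal{R}_A^*(x)u^*),w]_A=-T_r\big((\mathcal{L}_A^*+\mathcal{R}_A^*)(w)\mathcal{R}_A^*(x)u^*\big)$ for every $w\in A$, and $(\mathcal{L}_A^*+\mathcal{R}_A^*)(w)\mathcal{R}_A^*(x)=0$ is just \eqref{eq:Leibniz rep3} rewritten for the representation $(\mathcal{L}_A^*,-\mathcal{L}_A^*-\mathcal{R}_A^*,A^*)$. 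This establishes \eqref{bim alg1}, and with it the proposition.

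Apart from \eqref{bim alg1}, the only genuine computations are reaching (i) --- careful transposition of $\mathcal{L}_A$, $\mathcal{R}_A$ and their dual maps --- and the single pairing argument deriving (ii) from (i) and skew-symmetry; (iii) is then two lines, and everything downstream is formal use of the representation axioms. As an alternative, one could verify the statement at one stroke by checking that the semidirect-type bracket on $A\oplus A^*$ attached to $(\mathcal{L}_A^*,-\mathcal{L}_A^*-\mathcal{R}_A^*,A^*,\circ_r)$ satisfies the Leibniz identity, but this merely repackages the same conditions.
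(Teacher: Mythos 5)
Your proof is correct, but it reaches the goal by a route that differs from the paper's in one essential place. The paper first upgrades skew-symmetry plus invariance to the tensor identity $(\mathcal{L}_{A}(x)\otimes\mathrm{id}+\mathrm{id}\otimes\mathcal{L}_{A}(x))r=0$ (its Eq.~\eqref{eq:lem4}) and hence to the \emph{exact} equivariance identity $[x,T_{r}(a^{*})]_{A}=T_{r}(\mathcal{L}^{*}_{A}(x)a^{*})$ (Eq.~\eqref{eq:lem5}); that single identity then drives both the Leibniz identity on $(A^{*},\circ_{r})$ and the verification of \eqref{bim alg1}, the latter collapsing under a pairing computation to the defining identity \eqref{eq:defi:Leibniz}. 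You never prove that exact identity: you work only from the operator forms (i) and (ii) of invariance (these are precisely the paper's Lemma~\ref{lem:4.1}), extract the homomorphism identity (iii) from them, and then handle \eqref{bim alg1} by showing that $[x,T_{r}(u^{*})]_{A}-T_{r}(\mathcal{L}^{*}_{A}(x)u^{*})$ lies in the left annihilator, via \eqref{eq:defi:Leibniz2}, (i), and \eqref{eq:Leibniz rep3} for the dual representation --- all steps I have checked and which are valid. Two remarks on the trade-off. First, the quantity you only place in $Z(A)$ is in fact identically zero by the paper's Eq.~\eqref{eq:lem5}, so your ``need not itself vanish'' caveat is unnecessary; the paper's stronger identity also gets reused later (in Proposition~\ref{pro1-6}), so deriving it here is not wasted effort. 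Second, your argument has the merit of avoiding the tensor manipulation behind Eq.~\eqref{eq:lem4} entirely and of making explicit which representation axiom (\eqref{eq:Leibniz rep3} in dual form) rescues \eqref{bim alg1}; the cost is a somewhat longer chain of congruences where the paper has a one-line substitution. Your treatment of \eqref{bim alg2}, \eqref{bim alg3} and of the Leibniz identity on $A^{*}$ matches the paper's in substance (the paper only writes out \eqref{bim alg1} and declares the other two ``similar'').
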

\begin{proof}
    Since $r$ is skew-symmetric and invariant,  for any $x,y\in A, a^{*},b^{*},c^{*}\in A^{*}$, we have
    \begin{eqnarray*}
    (\mathcal{L}_{A}(x)\otimes\mathrm{id})r&=&(\mathrm{id}\otimes\mathcal{R}_{A}(x)-\mathcal{R}_{A}(x)\otimes\mathrm{id})r=-(\mathrm{id}\otimes\mathcal{R}_{A}(x)-\mathcal{R}_{A}(x)\otimes\mathrm{id})\sigma(r)\\
    &=&-\sigma( (\mathcal{R}_{A}(x)\otimes\mathrm{id}-\mathrm{id}\otimes\mathcal{R}_{A}(x))r )=\sigma((\mathcal{L}(x)\otimes\mathrm{id})r)\\
    &=&(\mathrm{id}\otimes\mathcal{L}_{A}(x))\sigma(r)=-(\mathrm{id}\otimes\mathcal{L}_{A}(x))r,
    \end{eqnarray*}
that is,
    \begin{equation}\label{eq:lem4}
    (\mathcal{L}_{A}(x)\otimes\mathrm{id}+\mathrm{id}\otimes\mathcal{L}_{A}(x))r=0.
    \end{equation}
    Then
    \begin{eqnarray*}
    \langle [x,T_{r}(a^{*})]_{A},b^{*}\rangle&=&-\langle T_{r}(a^{*}),\mathcal{L}^{*}_{A}(x)b^{*}\rangle=-\langle r,a^{*}\otimes\mathcal{L}^{*}_{A}(x)b^{*}\rangle=\langle(\mathrm{id}\otimes\mathcal{L}_{A}(x))r, a^{*}\otimes b^{*}\rangle\\
    &\overset{\eqref{eq:lem4}}{=}&-\langle(\mathcal{L}_{A}(x)\otimes\mathrm{id})r, a^{*}\otimes b^{*}\rangle=\langle r, \mathcal{L}^{*}_{A}(x)a^{*}\otimes b^{*}\rangle=\langle T_{r}(\mathcal{L}^{*}_{A}(x)a^{*}),b^{*}\rangle.
    \end{eqnarray*}
That is,
\begin{equation}\label{eq:lem5}
\left[x,T_{r}(a^{*})\right]_{A}-T_{r}(\mathcal{L}^{*}_{A}(x)a^{*})=0.
\end{equation}
   Hence we have
    \begin{eqnarray*}
    &&
    a^{*}\circ_{r}( b^{*}\circ_{r}c^{*})-(a^{*}\circ_{r}b^{*})\circ_{r}c^{*}-b^{*}\circ_{r}( a^{*}\circ_{r}c^{*})\\
    &&=\mathcal{L}^{*}_{A}(T_{r}(a^{*}))\mathcal{L}^{*}_{A}(T_{r}(b^{*}))c^{*}-\mathcal{L}^{*}_{A}(T_{r}(\mathcal{L}^{*}_{A}(T_{r}(a^{*}))b^{*}))c^{*}-\mathcal{L}^{*}_{A}(T_{r}(b^{*}))\mathcal{L}^{*}_{A}(T_{r}(a^{*}))c^{*}\\
    &&\overset{\eqref{eq:lem5}}{=}\mathcal{L}^{*}_{A}(T_{r}(a^{*}))\mathcal{L}^{*}_{A}(T_{r}(b^{*}))c^{*}-\mathcal{L}^{*}_{A}([T_{r}(a^{*}),T_{r}(b^{*})]_{A})-\mathcal{L}^{*}_{A}(T_{r}(b^{*}))\mathcal{L}^{*}_{A}(T_{r}(a^{*}))c^{*}\\
    &&\overset{\eqref{eq:Leibniz rep1}}{=}0.
    \end{eqnarray*}
Thus $(A^{*},\circ_{r})$ is a Leibniz algebra. Moreover, we have
    \begin{eqnarray*}
        \langle\mathcal{L}^{*}_{A}(x)(a^{*}\circ_{r}b^{*}),y\rangle&=&-\langle \mathcal{L}^{*}_{A}(T_{r}(a^{*}))b^{*},[x,y]_{A}\rangle=\langle b^{*},[T_{r}(a^{*}),[x,y]_{A}]_{A}\rangle,\\
        \langle (\mathcal{L}^{*}_{A}(x)a^{*})\circ_{r}b^{*},y\rangle&=&\langle\mathcal{L}^{*}_{A}(T_{r}(\mathcal{L}^{*}_{A}(x)a^{*}))b^{*},y\rangle=-\langle b^{*},[T_{r}(\mathcal{L}^{*}_{A}(x)a^{*}),y]_{A}\rangle\\
        &\overset{\eqref{eq:lem5}}{=}&-\langle b^{*},[[x,T_{r }(a^{*})]_{A},y]_{A}\rangle=\langle b^{*},[[ T_{r}(a^{*}),x]_{A},y]_{A}\rangle,\\
        \langle a^{*}\circ_{r}(\mathcal{L}^{*}_{A}(x)b^{*}),y\rangle&=&\langle\mathcal{L}^{*}_{A}(T_{r}(a^{*}))\mathcal{L}^{*}_{A}(x)b^{*},y\rangle=\langle b^{*},[x,[T_{r}(a^{*}),y]_{A}]_{A}\rangle.
    \end{eqnarray*}
Hence Eq.~\eqref{bim alg1} holds for $(\mathcal{L}^{*}_{A },-\mathcal{L}^{*}_{A}-\mathcal{R}^{*}_{A },A^{*},\circ_{r})$. Similarly Eq.~\eqref{bim alg2} and Eq.~\eqref{bim alg3} hold for $(\mathcal{L}^{*}_{A },-\mathcal{L}^{*}_{A }-\mathcal{R}^{*}_{A },A^{*},\circ_{r})$,
and thus $(\mathcal{L}^{*}_{A },-\mathcal{L}^{*}_{A }-\mathcal{R}^{*}_{A },A^{*},\circ_{r})$ is an $A$-Leibniz algebra.
\end{proof}

We recall the notion of relative Rota-Baxter operators with weights on Leibniz algebras.

\begin{defi}\cite{Das}
    Let $(A,[-,-]_{A})$ be a Leibniz algebra and $(\mathnormal{l},\mathnormal{r},V,[-,-]_{V})$ be an $A$-Leibniz algebra. A linear map $T:V\rightarrow A$ is called a \textbf{relative Rota-Baxter operator of weight $\lambda\in\mathbb{K}$} on $(A,[-,-]_{A})$ with respect to $(\mathnormal{l},\mathnormal{r},V,[-,-]_{V})$ if $T$ satisfies
    \begin{equation}
        [Tu,Tv]_{A}=T(\mathnormal{l}(Tu)v+\mathnormal{r}(Tv)u+\lambda [u,v]_{V}),\;\;\forall u,v\in V.
    \end{equation}
\end{defi}

\begin{rmk}
If the multiplication $[-,-]_{V}$ on $V$ happens to be trivial, that is, $[u,v]_{V}=0$ for all $u,v\in V$, then
$T$ is simply called a \textbf{relative Rota-Baxter operator} on $(A,[-,-]_{A})$ with respect to $(\mathnormal{l},\mathnormal{r},V)$ as introduced in \cite{ST}.
On the other hand, recall that a {\bf Rota-Baxter operator of weight $\lambda\in\mathbb{K}$}  on a Leibniz algebra $(A,[-,-]_{A})$ is a linear map $\beta:A\rightarrow A$ such that
\begin{equation} \label{eq:rbo}
    [\beta(x),\beta(y)]_{A}=\beta([x,\beta(y)]_{A}+[\beta(x),y]_{A}+\lambda[x,y]_{A}), \quad \forall x, y\in A.
\end{equation}
Thus a Rota-Baxter operator on a Leibniz algebra $(A,[-,-]_{A})$ is a relative Rota-Baxter operator on $(A,[-,-]_{A})$ with respect to $(\mathcal{L}_{A},\mathcal{R}_{A},A,[-,-]_{A})$ of the same weight.
\end{rmk}

\begin{lem}\label{lem:4.1}
    Let $(A,[-,-]_{A})$ be a Leibniz algebra and $r\in A\otimes A$.
    Then  $r$ is invariant if and only if the following equation holds:
    \begin{equation}\label{eq:lem1}
        [T_{r}(a^{*}),x]_{A}+T_{r}((\mathcal{L}^{*}_{A}+\mathcal{R}^{*}_{A})(x)a^{*})=0,\;\forall x\in A, a^{*}\in A^{*}.
    \end{equation}
    If in addition $r$  is skew-symmetric, then $r$ is invariant if and only if the following equation holds:
    \begin{equation}\label{eq:lem2}
        \mathcal{L}^{*}_{A}(T_{r}(a^{*}))b^{*}+
        (\mathcal{L}^{*}_{A}+\mathcal{R}^{*}_{A})(T_{r}(b^{*}))a^{*}=0,\;\forall a^{*},b^{*}\in A^{*}.
    \end{equation}
\end{lem}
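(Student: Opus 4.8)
## Proof plan for Lemma \ref{lem:4.1}

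The plan is to prove both equivalences by the standard device of pairing everything against an arbitrary covector $b^{*}\in A^{*}$ and converting statements about the operator $T_{r}$ and the dual maps $\mathcal{L}_{A}^{*},\mathcal{R}_{A}^{*}$ into statements about the $2$-tensor $r$ contracted with the operators $\mathcal{L}_{A},\mathcal{R}_{A}$; then recognizing the resulting expression as $F(x)r$ (for the first equivalence) or as a skew-symmetrized version of it (for the second). Throughout I would use only the identification \eqref{eq:id} relating $r$ to $T_{r}$, the definition of the dual map $f^{*}$, and the definition \eqref{eq:F} of $F$; no Leibniz identity is needed here, since invariance of $r$ is a purely linear condition.

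For the first equivalence, fix $x\in A$ and $a^{*}\in A^{*}$, and test the left-hand side of \eqref{eq:lem1} against an arbitrary $b^{*}\in A^{*}$. Using $\langle [T_{r}(a^{*}),x]_{A},b^{*}\rangle=-\langle T_{r}(a^{*}),\mathcal{R}_{A}^{*}(x)b^{*}\rangle=-\langle r,a^{*}\otimes\mathcal{R}_{A}^{*}(x)b^{*}\rangle$ and $\langle T_{r}((\mathcal{L}_{A}^{*}+\mathcal{R}_{A}^{*})(x)a^{*}),b^{*}\rangle=\langle r,(\mathcal{L}_{A}^{*}+\mathcal{R}_{A}^{*})(x)a^{*}\otimes b^{*}\rangle$, I would collect the two terms into $\langle\big(\mathrm{id}\otimes\mathcal{R}_{A}(x)-(\mathcal{L}_{A}+\mathcal{R}_{A})(x)\otimes\mathrm{id}\big)r,\,a^{*}\otimes b^{*}\rangle=-\langle F(x)r,\,a^{*}\otimes b^{*}\rangle$. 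Since $x$, $a^{*}$, $b^{*}$ are arbitrary and the pairing is nondegenerate, \eqref{eq:lem1} holds for all $x,a^{*}$ if and only if $F(x)r=0$ for all $x$, i.e. $r$ is invariant.

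For the second equivalence, assume in addition that $r$ is skew-symmetric, so $\sigma(r)=-r$ and hence $T_{\sigma(r)}=-T_{r}$. Fix $a^{*},b^{*}\in A^{*}$ and pair the left-hand side of \eqref{eq:lem2} against an arbitrary $x\in A$. Expanding $\langle\mathcal{L}_{A}^{*}(T_{r}(a^{*}))b^{*},x\rangle=-\langle b^{*},[T_{r}(a^{*}),x]_{A}\rangle=\langle T_{r}(a^{*}),\mathcal{R}_{A}^{*}(x)b^{*}\rangle=\langle r,a^{*}\otimes\mathcal{R}_{A}^{*}(x)b^{*}\rangle$ and, similarly, $\langle(\mathcal{L}_{A}^{*}+\mathcal{R}_{A}^{*})(T_{r}(b^{*}))a^{*},x\rangle=\langle T_{r}(b^{*}),(\mathcal{L}_{A}^{*}+\mathcal{R}_{A}^{*})(x)a^{*}\rangle=\langle r,b^{*}\otimes(\mathcal{L}_{A}^{*}+\mathcal{R}_{A}^{*})(x)a^{*}\rangle$, and then using skew-symmetry to rewrite the second pairing in terms of $\sigma(r)=-r$, I expect both contributions to assemble again into $-\langle F(x)r,\,a^{*}\otimes b^{*}\rangle$ (up to sign). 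Thus \eqref{eq:lem2} for all $a^{*},b^{*}$ is again equivalent to $F(x)r=0$ for all $x$; the ``only if'' direction is immediate, and the ``if'' direction uses skew-symmetry exactly once, to turn the $\mathrm{id}\otimes(\cdots)$ term into an $(\cdots)\otimes\mathrm{id}$ term.

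The computations are routine bookkeeping with the pairing; the only mild subtlety—and the place one must be careful—is keeping track of signs and of which tensor slot each operator acts on when transferring operators across the pairing and when invoking skew-symmetry in the second part, so that the final expression matches $F(x)r$ exactly rather than some permuted variant of it. If the signs are handled consistently the result drops out, so I would expect no real obstacle beyond this.
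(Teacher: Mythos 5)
Your plan is correct and coincides with the paper's own proof: both parts are established by pairing the left-hand sides of \eqref{eq:lem1} and \eqref{eq:lem2} against an arbitrary $b^{*}$ (resp.\ $x$), transferring $\mathcal{L}_{A},\mathcal{R}_{A}$ across the pairing via \eqref{eq:id}, and using skew-symmetry exactly once in the second part to convert the $b^{*}\otimes(\cdots)$ term, so that each expression equals $\pm\langle F(x)r,\,a^{*}\otimes b^{*}\rangle$. No gaps; the sign ambiguity you flag is harmless since only the vanishing of $F(x)r$ matters.
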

\begin{proof}
    For all $x\in A, a^{*},b^{*}\in A^{*}$, we have
    \begin{eqnarray*}
        &&\langle [T_{r}(a^{*}),x]_{A}+T_{r}((\mathcal{L}^{*}_{A}+\mathcal{R}^{*}_{A})(x)a^{*}),b^{*}\rangle\\
        &&=\langle r, -a^{*}\otimes\mathcal{R}^{*}_{A}(x)b^{*}+(\mathcal{L}^{*}_{A}+\mathcal{R}^{*}_{A})(x)a^{*}\otimes b^{*}\rangle\\
        &&=\langle  (\mathrm{id}\otimes\mathcal{R}_{A}(x)-(\mathcal{L}_{A}+\mathcal{R}_{A})(x)\otimes\mathrm{id})r, a^{*}\otimes b^{*}\rangle.
    \end{eqnarray*}
    Hence $r$ is invariant if and only if Eq.~\eqref{eq:lem1} holds.
    Moreover, if $r$  is skew-symmetric, then we have
    \begin{eqnarray*}
        &&\langle \mathcal{L}^{*}_{A}(T_{r}(a^{*}))b^{*}+(\mathcal{L}^{*}_{A}+\mathcal{R}^{*}_{A})(T_{r}(b^{*}))a^{*},x\rangle\\
        &&=-\langle b^{*},[T_{r}(a^{*}),x]_{A}\rangle-\langle a^{*},[T_{r}(b^{*}),x]_{A}+[x,T_{r}(b^{*})]_{A}\rangle\\
        &&=\langle T_{r}(a^{*}),\mathcal{R}^{*}_{A}(x)b^{*}\rangle+\langle T_{r}(b^{*}),(\mathcal{L}^{*}_{A}+\mathcal{R}^{*}_{A})(x)a^{*}\rangle\\
        &&=\langle r, a^{*}\otimes\mathcal{R}^{*}_{A}(x)b^{*}-(\mathcal{L}^{*}_{A}+\mathcal{R}^{*}_{A})(x)a^{*}\otimes b^{*}\rangle\\
        &&=\langle  (-\mathrm{id}\otimes\mathcal{R}_{A}(x)+(\mathcal{L}_{A}+\mathcal{R}_{A})(x)\otimes\mathrm{id})r, a^{*}\otimes b^{*}\rangle.
    \end{eqnarray*}
    Hence $r$ is invariant if and only if Eq.~\eqref{eq:lem2} holds.
\end{proof}

\begin{thm}\label{pro:rrb}
    Let $(A,[-,-]_{A})$ be a Leibniz algebra and $r\in A\otimes A$ satisfy Eq.~\eqref{s.i.}.
    Then the following conditions are equivalent:
    \begin{enumerate}
        \item\label{rrb0}
        $r$ is a solution of the CLYBE in $(A,[-,-]_{A})$ such that $(A,[-,-]_{A},\Delta_{r})$ with $\Delta_r$ defined by Eq.~\eqref{eq:cob} is a quasi-triangular Leibniz bialgebra.
        \item\label{rrb1} $T_{r}$ is a relative Rota-Baxter operator of weight $-1$ on $(A,[-,-]_{A})$ with respect to the  $A$-Leibniz algebra $(\mathcal{L}^{*}_{A},-\mathcal{L}^{*}_{A}-\mathcal{R}^{*}_{A}, A^{*},\circ_{r-\sigma(r)})$, where the multiplication $\circ_{r-\sigma(r)}$ is given by 
        \begin{equation}\label{eq:thm:3,1}
            a^{*}\circ_{r-\sigma(r)} b^{*}=\mathcal{L}^{*}_{A}( T_{r-\sigma(r)} (a^{*}))b^{*},\;\;\forall a^{*},b^{*}\in A^{*}.
        \end{equation}
    That is, the following equation holds:
    \begin{equation}\label{o-op-1}
        [T_{r}(a^{*}),T_{r}(b^{*})]_{A}=T_{r}(  \mathcal{L}^{*}_{A}(T_{r}(a^{*}))b^{*}- (\mathcal{L}^{*}_{A}+\mathcal{R}^{*}_{A})(T_{r}(b^{*}))a^{*}-a^{*}\circ_{r-\sigma(r)}b^{*}).
    \end{equation}
    \end{enumerate}
\end{thm}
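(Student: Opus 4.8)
The plan is to show that condition (2) is the operator identity \eqref{eq:T homo1} of Proposition \ref{pro:CLYBE1} in disguise, and then to close the loop through Proposition \ref{pro:CLYBE1} and Corollary \ref{cor:quasitr}. First I would record that $\rho:=r-\sigma(r)$ is skew-symmetric and, by Eq.~\eqref{s.i.}, invariant. Consequently Proposition \ref{pro ss inv Leibniz} applied to $\rho$ guarantees that $(\mathcal{L}^{*}_{A},-\mathcal{L}^{*}_{A}-\mathcal{R}^{*}_{A},A^{*},\circ_{r-\sigma(r)})$ really is an $A$-Leibniz algebra (with $\circ_{r-\sigma(r)}$ exactly as in \eqref{eq:thm:3,1}), so that the assertion of (2) is meaningful; and unwinding the definition of a relative Rota-Baxter operator of weight $-1$ with $V=A^{*}$, $\mathnormal{l}=\mathcal{L}^{*}_{A}$, $\mathnormal{r}=-\mathcal{L}^{*}_{A}-\mathcal{R}^{*}_{A}$, $[-,-]_{V}=\circ_{r-\sigma(r)}$ and $T=T_{r}$ shows that (2) is equivalent to Eq.~\eqref{o-op-1}. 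So the real task is to prove \eqref{o-op-1} $\Leftrightarrow$ (1).

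Next I would invoke Lemma \ref{lem:4.1} for the skew-symmetric invariant element $\rho=r-\sigma(r)$, which yields Eq.~\eqref{eq:lem2} in the form
\begin{equation*}
\mathcal{L}^{*}_{A}(T_{r-\sigma(r)}(a^{*}))b^{*}=-(\mathcal{L}^{*}_{A}+\mathcal{R}^{*}_{A})(T_{r-\sigma(r)}(b^{*}))a^{*},\qquad\forall a^{*},b^{*}\in A^{*},
\end{equation*}
that is, $-a^{*}\circ_{r-\sigma(r)}b^{*}=(\mathcal{L}^{*}_{A}+\mathcal{R}^{*}_{A})(T_{r-\sigma(r)}(b^{*}))a^{*}$. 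Substituting this into the right-hand side of \eqref{o-op-1} and using $T_{r}-T_{r-\sigma(r)}=T_{\sigma(r)}$ collapses that right-hand side to $T_{r}\bigl(\mathcal{L}^{*}_{A}(T_{r}(a^{*}))b^{*}-(\mathcal{L}^{*}_{A}+\mathcal{R}^{*}_{A})(T_{\sigma(r)}(b^{*}))a^{*}\bigr)$, which is precisely the right-hand side of \eqref{eq:T homo1} (the left-hand sides already agree). Hence \eqref{o-op-1} holds if and only if \eqref{eq:T homo1} holds.

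Finally, since $r$ satisfies Eq.~\eqref{s.i.} by hypothesis, Proposition \ref{pro:CLYBE1} gives that \eqref{eq:T homo1} holds if and only if $r$ is a solution of the CLYBE; and Corollary \ref{cor:quasitr}, together with the definition of a quasi-triangular Leibniz bialgebra, identifies this last condition with (1). Chaining (1) $\Leftrightarrow$ \eqref{eq:T homo1} $\Leftrightarrow$ \eqref{o-op-1} $\Leftrightarrow$ (2) then completes the proof. I do not expect a genuine obstacle here: everything rests on results already available, and the only point that needs care is the bookkeeping in the substitution step — keeping straight which of $a^{*}$, $b^{*}$ is being acted upon when the term $(\mathcal{L}^{*}_{A}+\mathcal{R}^{*}_{A})(T_{\sigma(r)}(b^{*}))a^{*}$ is rewritten via the decomposition $T_{\sigma(r)}=T_{r}-T_{r-\sigma(r)}$ and Lemma \ref{lem:4.1}.
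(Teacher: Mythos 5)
Your proposal is correct and follows essentially the same route as the paper's proof: establish the $A$-Leibniz algebra structure via Proposition \ref{pro ss inv Leibniz}, rewrite $a^{*}\circ_{r-\sigma(r)}b^{*}$ using Eq.~\eqref{eq:lem2} applied to the skew-symmetric invariant element $r-\sigma(r)$, collapse the right-hand side of \eqref{o-op-1} to that of \eqref{eq:T homo1} via $T_{\sigma(r)}=T_{r}-T_{r-\sigma(r)}$, and conclude with Proposition \ref{pro:CLYBE1}. The only (harmless) difference is that you make the appeal to Corollary \ref{cor:quasitr} explicit where the paper leaves it implicit.
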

\begin{proof}
 By Proposition \ref{pro ss inv Leibniz}, $(\mathcal{L}^{*}_{A},-\mathcal{L}^{*}_{A}-\mathcal{R}^{*}_{A}, A^{*},\circ_{r-\sigma(r)})$ is an $A$-Leibniz algebra. On the other hand, we have
 \begin{eqnarray*}
 &&T_{r}(\mathcal{L}^{*}_{A}(T_{r}(a^{*}))b^{*})-T_{r}((\mathcal{L}^{*}_{A}+\mathcal{R}^{*}_{A})(T_{r}(b^{*}))a^{*})-T_{r}(a^{*}\circ_{r-\sigma(r)} b^{*})\\
 &\overset{\eqref{eq:thm:3,1}}{=}&T_{r}(\mathcal{L}^{*}_{A}(T_{r}(a^{*}))b^{*})-T_{r}((\mathcal{L}^{*}_{A}+\mathcal{R}^{*}_{A})(T_{r}(b^{*}))a^{*})-T_{r}(\mathcal{L}^{*}_{A}(T_{r-\sigma(r)}(a^{*}))b^{*})\\
 &\overset{\eqref{eq:lem2}}{=}&T_{r}(\mathcal{L}^{*}_{A}(T_{r}(a^{*}))b^{*})-T_{r}((\mathcal{L}^{*}_{A}+\mathcal{R}^{*}_{A})(T_{r}(b^{*}))a^{*}) +T_{r}((\mathcal{L}^{*}_{A}+\mathcal{R}^{*}_{A})(T_{r-\sigma(r)}(b^{*}))a^{*})\\
 &=&T_{r}(\mathcal{L}^{*}_{A}(T_{r}(a^{*}))b^{*})-T_{r}((\mathcal{L}^{*}_{A}+\mathcal{R}^{*}_{A})(T_{\sigma(r)}(b^{*}))a^{*}).
 \end{eqnarray*}
 Thus Eq.~\eqref{o-op-1} holds if and only if Eq.~\eqref{eq:T homo1} holds.
 Hence the conclusion follows from Proposition \ref{pro:CLYBE1}.
\end{proof}

Taking a symmetric 2-tensor $r\in A\otimes A$ into Theorem~\ref{pro:rrb}, we recover the following result as implied in \cite{ST}.

\begin{cor}
    Let $(A,[-,-]_{A})$ be a Leibniz algebra and $r\in A\otimes A$ be symmetric. Then $r$ is a solution of the CLYBE in $(A,[-,-]_{A})$ if and only if $T_{r}:A^{*}\rightarrow A$ is a relative Rota-Baxter operator on  $(A,[-,-]_{A})$ with respect to the representation $(\mathcal{L}^{*}_{A},-\mathcal{L}^{*}_{A}-\mathcal{R}^{*}_{A},A^{*})$.
\end{cor}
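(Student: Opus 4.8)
The plan is to specialize Theorem~\ref{pro:rrb} to the symmetric case, in which everything involving the skew-symmetric part $r-\sigma(r)$ collapses to zero. First I would observe that if $r$ is symmetric then $\sigma(r)=r$, so $r-\sigma(r)=0$; in particular $F(x)(r-\sigma(r))=0$ for all $x\in A$, i.e. Eq.~\eqref{s.i.} holds automatically. Thus $r$ satisfies the standing hypothesis of Theorem~\ref{pro:rrb}, and both of its items are available.

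Next I would unwind item (\ref{rrb0}) of Theorem~\ref{pro:rrb} in this setting. By Corollary~\ref{cor:quasitr}, whenever $r$ is a solution of the CLYBE satisfying Eq.~\eqref{s.i.}, the co-multiplication $\Delta_r$ of Eq.~\eqref{eq:cob} makes $(A,[-,-]_A,\Delta_r)$ a (quasi-triangular) Leibniz bialgebra; conversely, by the very definition of quasi-triangularity, such a bialgebra is built from a solution of the CLYBE. Hence, for symmetric $r$, item (\ref{rrb0}) is equivalent to the bare assertion that $r$ is a solution of the CLYBE in $(A,[-,-]_A)$.

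Then I would unwind item (\ref{rrb1}). Since $r-\sigma(r)=0$ we have $T_{r-\sigma(r)}=0$, so the multiplication $\circ_{r-\sigma(r)}$ of Eq.~\eqref{eq:thm:3,1} is identically zero; that is, the $A$-Leibniz algebra $(\mathcal{L}^{*}_{A},-\mathcal{L}^{*}_{A}-\mathcal{R}^{*}_{A},A^{*},\circ_{r-\sigma(r)})$ has trivial internal multiplication. By the Remark following the definition of relative Rota-Baxter operators of weight $\lambda$, a relative Rota-Baxter operator of weight $-1$ with respect to an $A$-Leibniz algebra whose internal multiplication vanishes is exactly a relative Rota-Baxter operator (in the sense of \cite{ST}) with respect to the representation $(\mathcal{L}^{*}_{A},-\mathcal{L}^{*}_{A}-\mathcal{R}^{*}_{A},A^{*})$; concretely, Eq.~\eqref{o-op-1} reduces to $[T_r(a^*),T_r(b^*)]_A=T_r\big(\mathcal{L}^*_A(T_r(a^*))b^*-(\mathcal{L}^*_A+\mathcal{R}^*_A)(T_r(b^*))a^*\big)$. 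Combining these two reformulations with the equivalence (\ref{rrb0})$\Leftrightarrow$(\ref{rrb1}) of Theorem~\ref{pro:rrb} gives the corollary.

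There is essentially no analytic obstacle: the entire argument is bookkeeping, matching the degenerate $A$-Leibniz algebra and the weight $-1$ operator identity against the definitions of \cite{ST}. The only point requiring a moment's care is confirming that, for symmetric $r$, item (\ref{rrb0}) carries no information beyond ``$r$ solves the CLYBE'', which is precisely Corollary~\ref{cor:quasitr} together with the definition of a quasi-triangular Leibniz bialgebra.
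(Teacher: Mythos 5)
Your proposal is correct and follows exactly the route the paper intends: the corollary is stated immediately after Theorem~\ref{pro:rrb} as its specialization to symmetric $r$, where $r-\sigma(r)=0$ makes Eq.~\eqref{s.i.} automatic, reduces item (\ref{rrb0}) to the bare CLYBE condition via Corollary~\ref{cor:quasitr}, and collapses the multiplication $\circ_{r-\sigma(r)}$ so that the weight $-1$ relative Rota-Baxter identity \eqref{o-op-1} becomes the unweighted one with respect to $(\mathcal{L}^{*}_{A},-\mathcal{L}^{*}_{A}-\mathcal{R}^{*}_{A},A^{*})$. Your write-up simply makes explicit the bookkeeping the paper leaves implicit.
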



Next we apply Theorem~\ref{pro:rrb} to the case of skew-symmetric quadratic Leibniz algebras.

Let $\omega$ be a nondegenerate bilinear form on a vector space $A$.
Then there is a linear isomorphism $\omega^{\sharp}:A\rightarrow A^{*}$ given by
\begin{equation*}
    \omega(x,y)=\langle\omega^{\sharp}(x),y\rangle,\;\;\forall x,y\in A.
\end{equation*}
Define a $2$-tensor $\phi_{\omega}\in A\otimes A$  to be the tensor form of $(\omega^{\sharp})^{-1}$, that is,
\begin{equation}\label{identify bf}
\langle\phi_{\omega},a^{*}\otimes b^{*}\rangle:=\langle (\omega^{\sharp})^{-1}a^{*},b^{*}\rangle,\;\;\forall a^{*},b^{*}\in A^{*}.
\end{equation}

\begin{lem}\label{lem3.10}
    Let $(A,[-,-]_{A})$ be a Leibniz algebra and $\omega$ be a nondegenerate bilinear form on $A$. Then  $(A,[-,-]_{A},\omega)$ is a skew-symmetric quadratic Leibniz algebra if and only if the corresponding $\phi_{\omega}\in A\otimes A$ via Eq.~\eqref{identify bf} is skew-symmetric and invariant.
\end{lem}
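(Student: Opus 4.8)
The plan is to transport everything through the linear isomorphism $\omega^{\sharp}$, noting first that comparing \eqref{eq:id} with \eqref{identify bf} gives $T_{\phi_{\omega}}=(\omega^{\sharp})^{-1}$. All the required equivalences then reduce to unwinding transposes, and the two halves of the statement (skew-symmetry of the tensor, invariance of the tensor) are treated separately.

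For skew-symmetry: since $\langle\sigma(\phi_{\omega}),a^{*}\otimes b^{*}\rangle=\langle\phi_{\omega},b^{*}\otimes a^{*}\rangle=\langle(\omega^{\sharp})^{-1}(b^{*}),a^{*}\rangle$, substituting $a^{*}=\omega^{\sharp}(x)$ and $b^{*}=\omega^{\sharp}(y)$ turns the tensor identity $\sigma(\phi_{\omega})=-\phi_{\omega}$ into $\omega(x,y)=-\omega(y,x)$ for all $x,y\in A$. Hence $\phi_{\omega}$ is skew-symmetric if and only if $\omega$ is.

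For invariance, assume $\omega$ (equivalently $\phi_{\omega}$) is skew-symmetric, so that the second part of Lemma~\ref{lem:4.1} applies: $\phi_{\omega}$ is invariant if and only if \eqref{eq:lem2} holds with $r=\phi_{\omega}$, namely $\mathcal{L}^{*}_{A}((\omega^{\sharp})^{-1}(a^{*}))b^{*}+(\mathcal{L}^{*}_{A}+\mathcal{R}^{*}_{A})((\omega^{\sharp})^{-1}(b^{*}))a^{*}=0$ for all $a^{*},b^{*}\in A^{*}$. Writing $a^{*}=\omega^{\sharp}(u)$, $b^{*}=\omega^{\sharp}(v)$, pairing with an arbitrary $w\in A$, and using $\langle\mathcal{L}^{*}_{A}(u)\omega^{\sharp}(v),w\rangle=-\omega(v,[u,w]_{A})$ together with $\langle(\mathcal{L}^{*}_{A}+\mathcal{R}^{*}_{A})(v)\omega^{\sharp}(u),w\rangle=-\omega(u,[v,w]_{A}+[w,v]_{A})$, the condition becomes $\omega(v,[u,w]_{A})=-\omega(u,[v,w]_{A}+[w,v]_{A})$ for all $u,v,w\in A$. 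Applying the skew-symmetry of $\omega$ to the right-hand side rewrites this as $\omega(v,[u,w]_{A})=\omega([v,w]_{A}+[w,v]_{A},u)$, which is exactly the invariance condition \eqref{eq:inv bilinear form} with $(x,y,z)=(v,u,w)$. Since $u,v,w$ range over all of $A$, this is equivalent to $\omega$ being invariant, and the lemma follows.

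I do not expect a genuine obstacle here: the argument is pure bookkeeping with transposes and signs. The only point requiring care is the order of the two steps — the sharper characterization of invariance in Lemma~\ref{lem:4.1} presupposes $\phi_{\omega}$ skew-symmetric, so the skew-symmetry equivalence must be established first. Alternatively one could bypass Lemma~\ref{lem:4.1} altogether and verify directly that $F(x)\phi_{\omega}=0$ for all $x\in A$ is equivalent to \eqref{eq:inv bilinear form}, by expanding $F$ from \eqref{eq:F} and pairing the result against $a^{*}\otimes b^{*}$; but routing through Lemma~\ref{lem:4.1} keeps the computation shortest.
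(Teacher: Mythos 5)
Your proof is correct and follows essentially the same route as the paper's: both establish the (trivial) equivalence of skew-symmetry first, then invoke the second part of Lemma~\ref{lem:4.1} (Eq.~\eqref{eq:lem2} with $r=\phi_{\omega}$, using $T_{\phi_{\omega}}=(\omega^{\sharp})^{-1}$) and unwind through $\omega^{\sharp}$ to match the invariance condition \eqref{eq:inv bilinear form}. The only difference is cosmetic: the paper computes from the $\omega$-side down to the dual-side identity, while you read the same computation in the opposite direction.
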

\begin{proof}
    It is obvious that $\omega$ is skew-symmetric if and only if $\phi_{\omega}$ is skew-symmetric.
     For any $x,y\in A$, we set $a^{*}=\omega^{\sharp}(x), b^{*}=\omega^{\sharp}(y)$.
    Under the skew-symmetric assumption, we have
\begin{eqnarray*}
    &&\omega(x,[y,z]_{A})-\omega([x,z]_{A}+[z,x]_{A},y)\\
    &&=\omega((\omega^{\sharp})^{-1}a^{*},[(\omega^{\sharp})^{-1}b^{*},z]_{A})-\omega([(\omega^{\sharp})^{-1}a^{*},z]_{A}+[z,(\omega^{\sharp})^{-1}a^{*}]_{A},(\omega^{\sharp})^{-1}b^{*})\\
    &&=\langle a^{*},[(\omega^{\sharp})^{-1}b^{*},z]_{A}\rangle+\langle b^{*},[(\omega^{\sharp})^{-1}a^{*},z]_{A}+[z,(\omega^{\sharp})^{-1}a^{*}]_{A}\rangle\\
    &&=-\langle\mathcal{L}^{*}_{A}((\omega^{\sharp})^{-1}b^{*})a^{*},z\rangle-\langle(\mathcal{L}^{*}_{A}+\mathcal{R}^{*}_{A})((\omega^{\sharp})^{-1}a^{*})b^{*},z\rangle.
\end{eqnarray*}
Thus $\omega$ is invariant if and only if
\begin{equation*}
\mathcal{L}^{*}_{A}((\omega^{\sharp})^{-1}b^{*})a^{*}+(\mathcal{L}^{*}_{A}+\mathcal{R}^{*}_{A})((\omega^{\sharp})^{-1}a^{*})b^{*}=0,
\end{equation*}
which equivalently gives the invariance of $\phi_{\omega}$ by Lemma \ref{lem:4.1}.
\end{proof}


\begin{pro}\label{pro1-6}
    Let $(A,[-,-]_{A},\omega)$ be a skew-symmetric quadratic Leibniz algebra  and $r\in A\otimes A$ satisfy Eq.~\eqref{s.i.}.
    Define a linear map $\beta:A\rightarrow A$ by
    \begin{equation}\label{Br,Brt}
        \beta(x)=T_{r}\omega^{\sharp}(x),\;\forall x\in A.
    \end{equation}
    Then  $r$ is a solution of the CLYBE in $(A,[-,-]_{A})$ if and only if  $\beta$ satisfies
        \begin{equation}\label{eq:pro3,1}
            [\beta(x),\beta(y)]_{A}=\beta \big([\beta(x),y]_{A}+ [x,\beta (y)]_{A}- [x,T_{r-\sigma(r)} \omega^{\sharp}(y) ]_{A}\big),\;\;\forall x,y\in A.
        \end{equation}
\end{pro}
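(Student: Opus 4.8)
The plan is to transport the operator characterization of the CLYBE given in Proposition~\ref{pro:CLYBE1} along the isomorphism $\omega^{\sharp}:A\rightarrow A^{*}$, and then simply read off Eq.~\eqref{eq:pro3,1}. Since $r$ satisfies Eq.~\eqref{s.i.}, Proposition~\ref{pro:CLYBE1} already tells us that $r$ is a solution of the CLYBE if and only if Eq.~\eqref{eq:T homo1} holds, i.e.
\begin{equation*}
[T_{r}(a^{*}),T_{r}(b^{*})]_{A}=T_{r}\big(\mathcal{L}^{*}_{A}(T_{r}(a^{*}))b^{*}-(\mathcal{L}^{*}_{A}+\mathcal{R}^{*}_{A})(T_{\sigma(r)}(b^{*}))a^{*}\big),\quad\forall a^{*},b^{*}\in A^{*}.
\end{equation*}
Because $\omega^{\sharp}$ is a linear isomorphism, this is equivalent to imposing the same identity only for $a^{*}=\omega^{\sharp}(x)$ and $b^{*}=\omega^{\sharp}(y)$ with $x,y\in A$; by Eq.~\eqref{Br,Brt} the left-hand side then becomes $[\beta(x),\beta(y)]_{A}$, which is the left-hand side of Eq.~\eqref{eq:pro3,1}.

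The remaining work is to rewrite the right-hand side. First I would record how $\mathcal{L}^{*}_{A}$ and $\mathcal{L}^{*}_{A}+\mathcal{R}^{*}_{A}$ act on the image of $\omega^{\sharp}$. By Lemma~\ref{lem3.10} the $2$-tensor $\phi_{\omega}$ attached to $\omega$ via Eq.~\eqref{identify bf} is skew-symmetric and invariant, and satisfies $T_{\phi_{\omega}}=(\omega^{\sharp})^{-1}$. Applying Lemma~\ref{lem:4.1} with $\phi_{\omega}$ in place of $r$ and substituting $a^{*}=\omega^{\sharp}(u)$, $b^{*}=\omega^{\sharp}(v)$ in Eqs.~\eqref{eq:lem1} and~\eqref{eq:lem2} yields
\begin{equation*}
(\mathcal{L}^{*}_{A}+\mathcal{R}^{*}_{A})(u)\omega^{\sharp}(v)=-\omega^{\sharp}([v,u]_{A}),\qquad \mathcal{L}^{*}_{A}(u)\omega^{\sharp}(v)=\omega^{\sharp}([u,v]_{A}),\quad\forall u,v\in A
\end{equation*}
(these may alternatively be checked directly from the invariance identity~\eqref{eq:inv bilinear form}). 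Substituting $a^{*}=\omega^{\sharp}(x)$, $b^{*}=\omega^{\sharp}(y)$ into the right-hand side of Eq.~\eqref{eq:T homo1}, applying the first identity with $u=T_{\sigma(r)}\omega^{\sharp}(y)$, $v=x$ and the second with $u=\beta(x)$, $v=y$, and then using $T_{r}\omega^{\sharp}=\beta$ once more, the right-hand side becomes
\begin{equation*}
\beta\big([\beta(x),y]_{A}+[x,T_{\sigma(r)}\omega^{\sharp}(y)]_{A}\big).
\end{equation*}

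It only remains to identify $[x,T_{\sigma(r)}\omega^{\sharp}(y)]_{A}$ with $[x,\beta(y)]_{A}-[x,T_{r-\sigma(r)}\omega^{\sharp}(y)]_{A}$, which is immediate from $T_{r-\sigma(r)}=T_{r}-T_{\sigma(r)}$ (Eq.~\eqref{factorizable operator}) together with $\beta(y)=T_{r}\omega^{\sharp}(y)$. Hence the $\omega^{\sharp}$-transported form of Eq.~\eqref{eq:T homo1} is precisely Eq.~\eqref{eq:pro3,1}, and chaining the equivalences proves the proposition. I expect no genuine obstacle; the only points needing care are fixing the signs in the two auxiliary $\omega^{\sharp}$-identities and spotting the cancellation $[x,T_{r}\omega^{\sharp}(y)]_{A}-[x,T_{r-\sigma(r)}\omega^{\sharp}(y)]_{A}=[x,T_{\sigma(r)}\omega^{\sharp}(y)]_{A}$.
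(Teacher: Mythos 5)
Your argument is correct and is essentially the paper's own proof: the paper likewise sets $a^{*}=\omega^{\sharp}(x)$, $b^{*}=\omega^{\sharp}(y)$ and uses the skew-symmetry and invariance of $\phi_{\omega}$ (via Lemma \ref{lem:4.1}) to convert each bracket in Eq.~\eqref{eq:pro3,1} into the corresponding dual-action term, the only cosmetic difference being that the paper routes the comparison through Eq.~\eqref{o-op-1} and Theorem \ref{pro:rrb} whereas you match Eq.~\eqref{eq:pro3,1} directly against Eq.~\eqref{eq:T homo1} of Proposition \ref{pro:CLYBE1}. Your two auxiliary identities and the final cancellation $T_{\sigma(r)}=T_{r}-T_{r-\sigma(r)}$ all check out.
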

\begin{proof}
    For any $x,y\in A$, we set $a^{*}=\omega^{\sharp}(x), b^{*}=\omega^{\sharp}(y)$. By Lemma \ref{lem3.10}, $\phi_{\omega}$ is skew-symmetric and invariant. Then we have
    \begin{eqnarray*}
        [\beta(x),\beta(y)]_{A}&=&[T_{r}(a^{*}),T_{r}(b^{*})]_{A},\\
        \beta[\beta(x),y]_{A}&=&T_{r}\omega^{\sharp}[T_{r}(a^{*}),(\omega^{\sharp})^{-1}b^{*}]_{A}\overset{\eqref{eq:lem5}}{=}T_{r}(\mathcal{L}^{*}_{A}(T_{r}(a^{*}))b^{*}),\\
        \beta[x,\beta(y)]_{A}&=&T_{r}\omega^{\sharp}[(\omega^{\sharp})^{-1}a^{*},T_{r}(b^{*})]_{A}\overset{\eqref{eq:lem1}}{=}-T_{r}((\mathcal{L}^{*}_{A}+\mathcal{R}^{*}_{A})(T_{r}(b^{*}))a^{*}),\\
        -\beta[x,T_{r-\sigma(r)}\omega^{\sharp}(y)]_{A}&=&
        -T_{r}\omega^{\sharp}[(\omega^{\sharp})^{-1}a^{*},T_{r-\sigma(r)}(b^{*})]_{A}\\&\overset{\eqref{eq:lem1}}{=}&
        T_{r}((\mathcal{L}^{*}_{A}+\mathcal{R}^{*}_{A})(T_{r-\sigma(r)}(b^{*}))a^{*})\\
        &\overset{\eqref{eq:lem2}}{=}&-T_{r}(  \mathcal{L}^{*}_{A}( T_{r-\sigma(r)}(a^{*})   ) b^{*} )=-T_{r}(a^{*}\circ_{r-\sigma(r)}b^{*}).
    \end{eqnarray*}
    Hence Eq.~\eqref{eq:pro3,1} holds if and only if Eq.~\eqref{o-op-1} holds.
Thus the conclusion follows from Theorem~\ref{pro:rrb}.
\end{proof}

\subsection{Skew-symmetric quadratic Rota-Baxter Leibniz algebras, triangular Leibniz bialgebras and factorizable Leibniz bialgebras}\label{sec4.2}\

\begin{defi}
    A \textbf{skew-symmetric quadratic Rota-Baxter Leibniz algebra of weight $\lambda$} is a quadruple $(A,[-,-]_{A},\omega,\beta)$, such that $(A,[-,-]_{A},\omega)$ is a skew-symmetric quadratic Leibniz algebra, $\beta$ is a Rota-Baxter operator of weight $\lambda$ on $(A,[-,-]_{A})$ and the following condition holds:
        \begin{equation}\label{strong}
            \omega(\beta (x),y)+\omega(x,\beta (y))+\lambda\omega(x,y)=0,\;\;\forall x,y\in A.
        \end{equation}
\end{defi}

\begin{pro}\label{mir}
    Let
$(A,[-,-]_{A},\omega)$ be a skew-symmetric quadratic Leibniz algebra and $\beta:A\rightarrow A$ be a linear map.
Then $(A,[-,-]_{A},\omega,\beta)$ is a skew-symmetric quadratic Rota-Baxter Leibniz algebra of weight $\lambda$ if and only if $(A,[-,-]_{A},-\omega,-(\lambda\mathrm{id}+\beta))$ is a skew-symmetric quadratic Rota-Baxter Leibniz algebra of weight $\lambda$. 
\end{pro}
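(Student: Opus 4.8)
The plan is to verify directly that each of the two defining conditions of a skew-symmetric quadratic Rota-Baxter Leibniz algebra of weight $\lambda$ is preserved under the transformation $(\omega,\beta)\mapsto(-\omega,-(\lambda\mathrm{id}+\beta))$. First I would observe that $-\omega$ is again a nondegenerate skew-symmetric bilinear form, and it is invariant in the sense of Eq.~\eqref{eq:inv bilinear form} precisely when $\omega$ is, since that condition is linear in $\omega$. Hence $(A,[-,-]_{A},-\omega)$ is a skew-symmetric quadratic Leibniz algebra if and only if $(A,[-,-]_{A},\omega)$ is, and only the Rota-Baxter condition \eqref{eq:rbo} and the compatibility condition \eqref{strong} need to be checked.

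For the Rota-Baxter part, write $\hat{\beta}:=-(\lambda\mathrm{id}+\beta)$, so that $\hat{\beta}(x)=-(\lambda x+\beta(x))$ and consequently $[\hat{\beta}(x),\hat{\beta}(y)]_{A}=[\lambda x+\beta(x),\lambda y+\beta(y)]_{A}$. The key step is the classical observation that $\beta\mapsto-(\lambda\mathrm{id}+\beta)$ is an involution on the set of Rota-Baxter operators of weight $\lambda$: expanding both $[\hat{\beta}(x),\hat{\beta}(y)]_{A}$ and $\hat{\beta}\big([\hat{\beta}(x),y]_{A}+[x,\hat{\beta}(y)]_{A}+\lambda[x,y]_{A}\big)$ by bilinearity, the quadratic-in-$\lambda$ and mixed terms match, and the remaining terms coincide exactly by the Rota-Baxter identity \eqref{eq:rbo} for $\beta$. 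Since $-(\lambda\mathrm{id}+\hat{\beta})=\beta$, the converse implication is automatic by applying the same computation to $\hat{\beta}$.

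For the compatibility part, substituting $\hat{\beta}$ into \eqref{strong} with the form $-\omega$ gives
\[
(-\omega)(\hat{\beta}(x),y)+(-\omega)(x,\hat{\beta}(y))+\lambda(-\omega)(x,y)=\omega(\beta(x),y)+\omega(x,\beta(y))+\lambda\omega(x,y),
\]
where the terms $\lambda\omega(x,y)$ produced by the $\lambda\mathrm{id}$-part of $\hat{\beta}$ cancel against the extra $-\lambda(-\omega)(x,y)$, so that the condition \eqref{strong} for $(-\omega,\hat{\beta})$ is literally the same equation as \eqref{strong} for $(\omega,\beta)$. Combining the two parts, the two quadruples are simultaneously skew-symmetric quadratic Rota-Baxter Leibniz algebras of weight $\lambda$, which is the claim. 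There is no genuine obstacle here: the only point requiring an explicit (but short and routine) computation is the involution property of Rota-Baxter operators under $\beta\mapsto-(\lambda\mathrm{id}+\beta)$.
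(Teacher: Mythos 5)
Your proposal is correct and follows essentially the same route as the paper: both invoke the well-known involution property of Rota--Baxter operators under $\beta\mapsto-(\lambda\mathrm{id}+\beta)$ and then verify by a one-line bilinearity computation that condition \eqref{strong} for $(-\omega,-(\lambda\mathrm{id}+\beta))$ reduces to condition \eqref{strong} for $(\omega,\beta)$. The only (harmless) addition is your explicit remark that $-\omega$ remains a nondegenerate skew-symmetric invariant form, which the paper leaves implicit.
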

\begin{proof}
    It is well-known that $\beta$ is a Rota-Baxter operator of weight $\lambda$ if and only if $-(\lambda\mathrm{id}+\beta)$ is a Rota-Baxter operator of weight $\lambda$. For all $x,y\in A$, we have
    \begin{equation*}
    \omega(-(\lambda\mathrm{id}+\beta)x,y)+\omega(x,-(\lambda\mathrm{id}+\beta)y)+\lambda\omega(x,y)=-\omega(\beta(x),y)-\omega(x,\beta(y))-\lambda\omega(x,y).
    \end{equation*}
    Thus $\beta$ satisfies Eq.~\eqref{strong} if and only if $-(\lambda\mathrm{id}+\beta)$ satisfies Eq.~\eqref{strong}.
    Hence the conclusion follows.
\end{proof}

\begin{pro}\label{pro:4.2}
Let $\beta$ be a Rota-Baxter operator of   weight $\lambda$ on a Leibniz algebra $(A,[-,-]_{A})$.
Then $$(A\ltimes_{\mathcal{L}^{*}_{A},-\mathcal{L}^{*}_{A}-\mathcal{R}^{*}_{A}} A^{*},\omega_{p},\beta-( \beta+\lambda\mathrm{id}_{A} )^{*})$$ is a skew-symmetric quadratic Rota-Baxter Leibniz algebra of weight $\lambda$, where the bilinear form $\omega_{p}$ on $A\oplus A^{*}$ is given by
\begin{equation}\label{eq:quad2}
\omega_{p}(x+a^{*},y+b^{*})=\langle x,b^{*}\rangle-\langle a^{*},y\rangle,\;\forall x,y\in A, a^{*},b^{*}\in A^{*}.
\end{equation}
\end{pro}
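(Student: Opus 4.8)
The plan is to verify the three defining conditions of a skew-symmetric quadratic Rota-Baxter Leibniz algebra of weight $\lambda$ for the quadruple in question, on the semidirect product Leibniz algebra $E:=A\ltimes_{\mathcal{L}^{*}_{A},-\mathcal{L}^{*}_{A}-\mathcal{R}^{*}_{A}} A^{*}$. First I would recall (from the paragraph following the definition of representations) that $(\mathcal{L}^{*}_{A},-\mathcal{L}^{*}_{A}-\mathcal{R}^{*}_{A},A^{*})$ is indeed a representation of $(A,[-,-]_{A})$, so $E$ is a well-defined Leibniz algebra. Next I would check that $\omega_{p}$ defined by Eq.~\eqref{eq:quad2} is a nondegenerate skew-symmetric bilinear form on $E$ (nondegeneracy and skew-symmetry are immediate from the pairing) and that it is invariant in the sense of Eq.~\eqref{eq:inv bilinear form}: expanding $\omega_{p}(x+a^{*},[y+b^{*},z+c^{*}]_{E})$ and $\omega_{p}([x+a^{*},z+c^{*}]_{E}+[z+c^{*},x+a^{*}]_{E},y+b^{*})$ using the semidirect product multiplication, the terms reorganize into the definition of $\mathcal{L}^{*}_{A},\mathcal{R}^{*}_{A}$ and the identity $\mathcal{R}^{*}_{A}(y)=-\mathcal{L}^{*}_{A}(y)-(\text{adjustments})$ coming from Eq.~\eqref{eq:Leibniz rep3}; this is a routine but slightly lengthy bookkeeping computation. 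Thus $(E,[-,-]_{E},\omega_{p})$ is a skew-symmetric quadratic Leibniz algebra.

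Second, I would show that $B:=\beta-(\beta+\lambda\,\mathrm{id}_{A})^{*}$ (acting as $\beta$ on $A$ and as $-(\beta+\lambda\,\mathrm{id}_{A})^{*}$ on $A^{*}$) is a Rota-Baxter operator of weight $\lambda$ on $E$, i.e.\ satisfies Eq.~\eqref{eq:rbo}. The $A\times A$ component of Eq.~\eqref{eq:rbo} is exactly the Rota-Baxter identity for $\beta$ on $(A,[-,-]_{A})$, which holds by hypothesis. The $A^{*}$-components require that $-(\beta+\lambda\,\mathrm{id}_{A})^{*}$ intertwines appropriately with the representation maps $\mathcal{L}^{*}_{A}$ and $-\mathcal{L}^{*}_{A}-\mathcal{R}^{*}_{A}$; the key algebraic input here is the standard fact that if $\beta$ is a Rota-Baxter operator of weight $\lambda$ on $(A,[-,-]_{A})$ then $-(\beta+\lambda\,\mathrm{id})$ is as well (recalled in the proof of Proposition~\ref{mir}), and dualizing the two Rota-Baxter identities for $\beta$ and $-(\beta+\lambda\,\mathrm{id})$ produces exactly the needed mixed relations after taking adjoints. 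I would verify these by pairing against arbitrary elements of $A$ and $A^{*}$ and pushing everything through the definition of $f^{*}$.

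Third, I would check the compatibility condition Eq.~\eqref{strong}, namely $\omega_{p}(B\xi,\eta)+\omega_{p}(\xi,B\eta)+\lambda\,\omega_{p}(\xi,\eta)=0$ for all $\xi,\eta\in E$. Writing $\xi=x+a^{*}$, $\eta=y+b^{*}$ and using $\omega_{p}(x+a^{*},y+b^{*})=\langle x,b^{*}\rangle-\langle a^{*},y\rangle$ together with $B(x+a^{*})=\beta(x)-(\beta+\lambda\,\mathrm{id}_{A})^{*}a^{*}$, the left-hand side becomes
\[
\langle\beta(x),b^{*}\rangle+\langle(\beta+\lambda\,\mathrm{id}_{A})^{*}a^{*},y\rangle+\langle x,-(\beta+\lambda\,\mathrm{id}_{A})^{*}b^{*}\rangle-\langle -(\beta+\lambda\,\mathrm{id}_{A})^{*}a^{*},y\rangle\ \text{wait---}
\]
more carefully, collecting the four cross terms and using $\langle(\beta+\lambda\,\mathrm{id}_{A})^{*}a^{*},y\rangle=-\langle a^{*},(\beta+\lambda\,\mathrm{id}_{A})y\rangle$, everything cancels identically: the $\beta$-contribution from the $A$-slot of the first term cancels against the $(\beta+\lambda\,\mathrm{id})^{*}$-contribution from the $A^{*}$-slot of the second term, modulo the $\lambda\,\omega_{p}(\xi,\eta)$ correction, which is precisely absorbed. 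This is a short purely formal computation with no Leibniz-algebra input. I expect the main obstacle to be the mixed ($A$-with-$A^{*}$) components of the Rota-Baxter identity Eq.~\eqref{eq:rbo} in step two: keeping careful track of signs and of which of $\mathcal{L}^{*}_{A}$, $\mathcal{R}^{*}_{A}$ appears requires patiently dualizing both the identity for $\beta$ and the identity for $-(\beta+\lambda\,\mathrm{id})$ and matching them against the four types of products in $E$. Once that bookkeeping is done the result follows.
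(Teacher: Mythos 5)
Your proposal is correct and is exactly what the paper intends: its entire proof reads ``It follows from a straightforward computation,'' and your three checks (invariance of $\omega_{p}$, the componentwise Rota-Baxter identity for $\beta\oplus(-(\beta+\lambda\mathrm{id}_{A})^{*})$ via the dualized identities for $\beta$ and $-(\beta+\lambda\mathrm{id})$, and the cancellation in Eq.~\eqref{strong}) are precisely that computation carried out. The only cosmetic point is that the invariance of $\omega_{p}$ follows from unwinding the definitions of $\mathcal{L}^{*}_{A}$ and $\mathcal{R}^{*}_{A}$ alone, without needing Eq.~\eqref{eq:Leibniz rep3}.
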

\begin{proof}
    It follows from a straightforward computation.
\end{proof}

\begin{ex}\label{ex:4.3}
    Let $(\mathfrak{g},\{-,-\}_{\mathfrak{g}})$ be a Lie algebra. Let ${\rm ad}_{\frak g}: \frak g\rightarrow {\rm End}({\frak g})$ be
the linear map defined by ${\rm ad}_{\frak g}(x) (y)=\{x,y\}$ for all $x,y\in \frak g$. Then $(\mathrm{ad}^{*}_{\mathfrak{g}},\mathfrak{g}^{*})$ is a representation of the Lie algebra $(\mathfrak{g},\{-,-\}_{\mathfrak{g}})$. 
Hence there is a Leibniz algebra structure on $\mathfrak{g}\oplus\mathfrak{g}^{*}$ given by
\begin{equation*}
    [x+a^{*},y+b^{*}]=\{x,y\}_{\mathfrak{g}}+\mathrm{ad}^{*}_{\mathfrak{g}}(x)b^{*},\;\;\forall x,y\in\mathfrak{g},a^{*},b^{*}\in \mathfrak{g}^{*},
\end{equation*}
which is called the {\bf hemisemidirect product} \cite{Kin} of $(\mathfrak{g},\{-,-\}_{\mathfrak{g}})$ with respect to  $(\mathrm{ad}^{*}_{\mathfrak{g}},\mathfrak{g}^{*})$. 
On the other hand, $\beta=\mathrm{id}_{\mathfrak{g}}$ is a Rota-Baxter operator of weight $-1$ on the Leibniz algebra $(\mathfrak{g},\{-,-\}_{\mathfrak{g}})$.
By Proposition \ref{pro:4.2},
$(\mathfrak{g}\ltimes_{\mathrm{ad}^{*}_{\mathfrak{g}},0}\mathfrak{g}^{*},\omega_{p},\mathrm{id}_{\mathfrak{g}})$ is a skew-symmetric quadratic Rota-Baxter Leibniz algebra of weight $-1$.
\end{ex}


\begin{lem}\label{lem:bf}
    Let $A$ be a vector space and $\omega$ be a nondegenerate skew-symmetric bilinear form.
    Let $r\in A\otimes A,\;\lambda\in\mathbb{K}$, and $\beta$ be given by Eq.~\eqref{Br,Brt}.
    Then $r$satisfies
    \begin{equation}\label{rw}
        r-\sigma(r)=-\lambda\phi_{\omega}
    \end{equation}
    if and only if $\beta$  satisfies the following equation:
    \begin{equation}\label{eq:lem:bf2}
        \omega(\beta(x),y)+\omega(x,\beta(y))+\lambda\omega(x,y)=0,\;\;\forall x,y\in A.
    \end{equation}
\end{lem}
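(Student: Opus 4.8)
The plan is to reduce both conditions to identities on the canonical pairing $\langle-,-\rangle$ between $A$ and $A^{*}$, and then invoke the nondegeneracy of $\omega$. First I would record that, comparing the identifications in Eq.~\eqref{eq:id} and Eq.~\eqref{identify bf}, the $2$-tensor $\phi_{\omega}$ has associated operator $T_{\phi_{\omega}}=(\omega^{\sharp})^{-1}$, so that $\langle\phi_{\omega},a^{*}\otimes b^{*}\rangle=\langle(\omega^{\sharp})^{-1}a^{*},b^{*}\rangle$ for all $a^{*},b^{*}\in A^{*}$.

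Fix $x,y\in A$ and set $a^{*}=\omega^{\sharp}(x)$, $b^{*}=\omega^{\sharp}(y)$; since $\omega^{\sharp}$ is a linear isomorphism, the pair $(a^{*},b^{*})$ ranges over all of $A^{*}\times A^{*}$ as $(x,y)$ ranges over $A\times A$. Using $\beta=T_{r}\omega^{\sharp}$ from Eq.~\eqref{Br,Brt}, together with the skew-symmetry of $\omega$ and the definition of $\omega^{\sharp}$, I would rewrite each summand appearing in Eq.~\eqref{eq:lem:bf2} in terms of $r$, $\sigma(r)$, $\phi_{\omega}$ paired against $a^{*}\otimes b^{*}$: namely $\omega(\beta(x),y)=-\langle\omega^{\sharp}(y),T_{r}(a^{*})\rangle=-\langle r,a^{*}\otimes b^{*}\rangle$; then $\omega(x,\beta(y))=\langle\omega^{\sharp}(x),T_{r}(b^{*})\rangle=\langle r,b^{*}\otimes a^{*}\rangle=\langle\sigma(r),a^{*}\otimes b^{*}\rangle$; and finally, since $\langle\phi_{\omega},a^{*}\otimes b^{*}\rangle=\langle x,b^{*}\rangle=\omega(y,x)=-\omega(x,y)$, also $\omega(x,y)=-\langle\phi_{\omega},a^{*}\otimes b^{*}\rangle$.

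Adding these three identities, the left-hand side of Eq.~\eqref{eq:lem:bf2} becomes $-\langle\,r-\sigma(r)+\lambda\phi_{\omega},\,a^{*}\otimes b^{*}\,\rangle$. Therefore Eq.~\eqref{eq:lem:bf2} holds for all $x,y\in A$ if and only if $\langle r-\sigma(r)+\lambda\phi_{\omega},\,a^{*}\otimes b^{*}\rangle=0$ for all $a^{*},b^{*}\in A^{*}$, which by nondegeneracy of the pairing is equivalent to $r-\sigma(r)+\lambda\phi_{\omega}=0$, i.e.\ to Eq.~\eqref{rw}. The only points requiring care — and there is no substantive obstacle beyond them — are the bookkeeping of signs, which come from the skew-symmetry of $\omega$ and from the convention that $\phi_{\omega}$ encodes $(\omega^{\sharp})^{-1}$ rather than $\omega^{\sharp}$, and the observation that the two universal quantifiers (over $A\times A$ and over $A^{*}\times A^{*}$) correspond exactly via the isomorphism $\omega^{\sharp}$.
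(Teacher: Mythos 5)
Your proof is correct and follows essentially the same route as the paper's: substitute $a^{*}=\omega^{\sharp}(x)$, $b^{*}=\omega^{\sharp}(y)$, rewrite the three terms of Eq.~\eqref{eq:lem:bf2} as $-\langle r,a^{*}\otimes b^{*}\rangle$, $\langle\sigma(r),a^{*}\otimes b^{*}\rangle$ and $-\lambda\langle\phi_{\omega},a^{*}\otimes b^{*}\rangle$, and conclude by nondegeneracy of the pairing. The signs all check out, so nothing needs to be changed.
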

\begin{proof}
     For any $x,y\in A$, set $a^{*}=\omega^{\sharp}(x), b^{*}=\omega^{\sharp}(y)$, we  have
    \begin{eqnarray*}
        \omega(\beta (x),y)&=&-\omega(y,\beta (x))=-\langle \omega^{\sharp}(y),T_{r}\omega^{\sharp}(x)\rangle=-\langle r, a^{*}\otimes b^{*}\rangle,\\
        \omega(x,\beta (y))&=&\langle\omega^{\sharp}(x),T_{r}\omega^{\sharp}(y)\rangle=\langle r,b^{*}\otimes a^{*}\rangle=\langle\sigma(r),a^{*}\otimes b^{*}\rangle,\\
        \lambda\omega(x,y)&=&-\lambda\omega(y,x)=-\lambda\langle \omega^{\sharp}(y),(\omega^{\sharp})^{-1}(\omega^{\sharp}(x))\rangle=-\lambda\langle\phi_{\omega}, a^{*}\otimes b^{*}\rangle.
    \end{eqnarray*}
    Hence $r$ satisfies  Eq.~\eqref{rw} if and only if $\beta$ satisfies Eq.~\eqref{eq:lem:bf2}.
\end{proof}



\begin{cor}\label{pro:triangular Leib}
   Let $(A,[-,-]_{A},\omega,\beta)$ be a skew-symmetric quadratic Rota-Baxter Leibniz algebra of weight $0$.
   Then there is a triangular Leibniz bialgebra $(A,[-,-]_{A},\Delta_{r})$ with $\Delta_r$ defined by Eq.~\eqref{eq:cob},
   where $r\in A\otimes A$ is given through the operator form $T_{r}$ by
   Eq.~\eqref{Br,Brt}, that is,
   \begin{equation}\label{eq:thm:quadratic to fact}
    T_{r}(a^{*})=\beta((\omega^{\sharp})^{-1}a^{*}),\;\;\forall a^{*}\in A^{*}.
   \end{equation}
\end{cor}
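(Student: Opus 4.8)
The plan is to take $r\in A\otimes A$ to be the $2$-tensor whose operator form is $T_{r}=\beta\circ(\omega^{\sharp})^{-1}$, equivalently $\beta=T_{r}\circ\omega^{\sharp}$ in the sense of Eq.~\eqref{Br,Brt}, and then to check in succession that (i) $r$ is symmetric, hence satisfies Eq.~\eqref{s.i.} trivially; and (ii) $r$ is a solution of the CLYBE in $(A,[-,-]_{A})$. Once these are in hand, Corollary \ref{cor:quasitr} produces the Leibniz bialgebra $(A,[-,-]_{A},\Delta_{r})$, and the symmetry of $r$ upgrades ``quasi-triangular'' to ``triangular'' by the remark following the definition of quasi-triangular Leibniz bialgebras.

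First I would deal with symmetry. Since the weight is $0$, the compatibility condition Eq.~\eqref{strong} of a skew-symmetric quadratic Rota-Baxter Leibniz algebra reduces to $\omega(\beta(x),y)+\omega(x,\beta(y))=0$ for all $x,y\in A$, which is precisely Eq.~\eqref{eq:lem:bf2} with $\lambda=0$. Applying Lemma \ref{lem:bf} with $\lambda=0$ (so that $-\lambda\phi_{\omega}=0$), this is equivalent to $r-\sigma(r)=0$; that is, $r$ is symmetric. Consequently $T_{r-\sigma(r)}=0$ and $F(x)(r-\sigma(r))=0$ for all $x\in A$, so Eq.~\eqref{s.i.} holds.

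Next, since $(A,[-,-]_{A},\omega)$ is a skew-symmetric quadratic Leibniz algebra and $r$ satisfies Eq.~\eqref{s.i.}, Proposition \ref{pro1-6} applies: $r$ solves the CLYBE if and only if $\beta$ satisfies Eq.~\eqref{eq:pro3,1}. Because $T_{r-\sigma(r)}=0$, the correction term $[x,T_{r-\sigma(r)}\omega^{\sharp}(y)]_{A}$ vanishes, so Eq.~\eqref{eq:pro3,1} becomes
\[
[\beta(x),\beta(y)]_{A}=\beta\bigl([\beta(x),y]_{A}+[x,\beta(y)]_{A}\bigr),\qquad\forall x,y\in A,
\]
which is exactly the defining identity Eq.~\eqref{eq:rbo} of a Rota-Baxter operator of weight $0$. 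Since $\beta$ is such an operator by hypothesis, $r$ is a solution of the CLYBE. Hence by Corollary \ref{cor:quasitr}, $(A,[-,-]_{A},\Delta_{r})$ with $\Delta_{r}$ given by Eq.~\eqref{eq:cob} is a Leibniz bialgebra, and because $r$ is symmetric it is triangular. The argument is essentially an assembly of earlier equivalences, so there is no substantive obstacle; the only point demanding care is tracking the specialisation $\lambda=0$ in both Lemma \ref{lem:bf} (to force $r=\sigma(r)$) and Proposition \ref{pro1-6} (to collapse Eq.~\eqref{eq:pro3,1} onto the weight-$0$ Rota-Baxter identity).
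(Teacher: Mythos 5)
Your proposal is correct and follows essentially the same route as the paper: the paper's own proof is the one-line observation that the claim "follows from Proposition \ref{pro1-6} and Lemma \ref{lem:bf} by observing $r-\sigma(r)=0$," which is precisely your specialisation of Lemma \ref{lem:bf} at $\lambda=0$ to get symmetry of $r$ and of Proposition \ref{pro1-6} to reduce the CLYBE to the weight-$0$ Rota-Baxter identity. Your write-up just makes the intermediate steps explicit.
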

\begin{proof}
    It follows from Proposition \ref{pro1-6} and Lemma \ref{lem:bf} by observing $r-\sigma(r)=0$.
\end{proof}


Next we establish a one-to-one correspondence between skew-symmetric quadratic Rota-Baxter Leibniz algebras of nonzero weights and factorizable Leibniz bialgebras.

\begin{thm}\label{thm:quadratic to fact}
    Let $(A,[-,-]_{A},\omega,\beta)$ be a skew-symmetric quadratic Rota-Baxter Leibniz algebra of weight $\lambda\neq 0$. Then there is a factorizable Leibniz bialgebra $(A,[-,-]_{A},\Delta_{r})$ with $\Delta_r$ defined by Eq.~\eqref{eq:cob}, where $r\in A\otimes A$ is given through the operator form $T_{r}$ by
    Eq.~\eqref{eq:thm:quadratic to fact}.
\end{thm}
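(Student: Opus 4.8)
The plan is to show that the $2$-tensor $r$ determined by $T_{r}=\beta\circ(\omega^{\sharp})^{-1}$ (that is, by Eq.~\eqref{eq:thm:quadratic to fact}) is a solution of the CLYBE whose skew-symmetric part is invariant, and that the operator $T_{r-\sigma(r)}$ of Eq.~\eqref{factorizable operator} is a linear isomorphism; the statement then follows from Corollary~\ref{cor:quasitr} and the definition of factorizability. Thus the proof is essentially an assembly of Lemma~\ref{lem3.10}, Lemma~\ref{lem:bf} and Proposition~\ref{pro1-6}.

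First I would identify the skew-symmetric part of $r$. Since $(A,[-,-]_{A},\omega)$ is a skew-symmetric quadratic Leibniz algebra, Lemma~\ref{lem3.10} gives that $\phi_{\omega}$ is skew-symmetric and invariant, and comparing Eq.~\eqref{identify bf} with Eq.~\eqref{eq:id} shows $T_{\phi_{\omega}}=(\omega^{\sharp})^{-1}$. The defining condition Eq.~\eqref{strong} of a skew-symmetric quadratic Rota-Baxter Leibniz algebra is precisely Eq.~\eqref{eq:lem:bf2}, so Lemma~\ref{lem:bf} yields Eq.~\eqref{rw}, namely $r-\sigma(r)=-\lambda\phi_{\omega}$; hence $T_{r-\sigma(r)}=-\lambda(\omega^{\sharp})^{-1}$. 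In particular $F(x)(r-\sigma(r))=-\lambda F(x)\phi_{\omega}=0$ for all $x\in A$ by the invariance of $\phi_{\omega}$, so $r$ satisfies Eq.~\eqref{s.i.}.

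Next I would verify the CLYBE. Because $r$ satisfies Eq.~\eqref{s.i.}, Proposition~\ref{pro1-6} applies: $r$ is a solution of the CLYBE if and only if $\beta$ satisfies Eq.~\eqref{eq:pro3,1}. Using $T_{r-\sigma(r)}=-\lambda(\omega^{\sharp})^{-1}$ one gets $T_{r-\sigma(r)}\omega^{\sharp}(y)=-\lambda y$, so that $-[x,T_{r-\sigma(r)}\omega^{\sharp}(y)]_{A}=\lambda[x,y]_{A}$, and Eq.~\eqref{eq:pro3,1} collapses to exactly the Rota-Baxter identity Eq.~\eqref{eq:rbo} of weight $\lambda$, which holds since $\beta$ is a Rota-Baxter operator of weight $\lambda$ on $(A,[-,-]_{A})$. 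Therefore $r$ is a solution of the CLYBE, and Corollary~\ref{cor:quasitr} shows that $(A,[-,-]_{A},\Delta_{r})$ with $\Delta_{r}$ defined by Eq.~\eqref{eq:cob} is a (quasi-triangular) Leibniz bialgebra.

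Finally, factorizability is immediate: $T_{r-\sigma(r)}=-\lambda(\omega^{\sharp})^{-1}$ is the composition of multiplication by the nonzero scalar $-\lambda$ (this is where $\lambda\neq 0$ enters) with the linear isomorphism $(\omega^{\sharp})^{-1}$, hence is itself a linear isomorphism $A^{*}\rightarrow A$, so $(A,[-,-]_{A},\Delta_{r})$ is a factorizable Leibniz bialgebra. I do not expect a genuine obstacle here; the only point that requires care is the bookkeeping identification $T_{r-\sigma(r)}=-\lambda(\omega^{\sharp})^{-1}$ and the resulting reduction of Eq.~\eqref{eq:pro3,1} to the plain Rota-Baxter condition Eq.~\eqref{eq:rbo}.
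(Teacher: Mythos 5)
Your proposal is correct and follows essentially the same route as the paper's proof: both derive $r-\sigma(r)=-\lambda\phi_{\omega}$ from Lemma~\ref{lem:bf}, deduce Eq.~\eqref{s.i.} and the nondegeneracy of $T_{r-\sigma(r)}=-\lambda(\omega^{\sharp})^{-1}$ from the invariance and nondegeneracy of $\omega$, and reduce the CLYBE to the weight-$\lambda$ Rota-Baxter identity via Proposition~\ref{pro1-6}. Your write-up merely makes explicit the collapse of Eq.~\eqref{eq:pro3,1} to Eq.~\eqref{eq:rbo}, which the paper leaves implicit.
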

\begin{proof}
By Lemma \ref{lem:bf}, Eq.~\eqref{rw} holds. Since $\omega$ is nondegenerate and invariant, $r$ satisfies Eq.~\eqref{s.i.} and $T_{r-\sigma(r)}=-\lambda (\omega^{\sharp})^{-1}$ is a linear isomorphism. Moreover, since $\beta$ is a Rota-Baxter operator of weight $\lambda$, Eq.~\eqref{eq:pro3,1} holds in Proposition \ref{pro1-6}, such that $[[r,r]]=0$. In conclusion, $r$ is a solution of the CLYBE satisfying Eq.~\eqref{s.i.} and $T_{r-\sigma(r)}$ is nondegenerate, and hence  $(A,[-,-]_{A},\Delta_{r})$ is a factorizable Leibniz bialgebra.
\end{proof}

\begin{cor}\label{cor:4.20}
Let $\beta$ be a Rota-Baxter operator of weight $\lambda\neq 0$ on a Leibniz algebra $(A,[-,-]_{A})$. Let $\{e_{1},\cdots,e_{n}\}$ be a basis of $A$ and $\{e^{*}_{1},\cdots,e^{*}_{n}\}$ be the dual basis.
Then $(A\ltimes_{\mathcal{L}^{*}_{A},-\mathcal{L}^{*}_{A}-\mathcal{R}^{*}_{A}} A^{*},\Delta_{r})$ is a factorizable Leibniz bialgebra with $\Delta_r$ defined by Eq.~\eqref{eq:cob}, where
\begin{equation}\label{eq:cor:4.20}
    r=\sum_{i}(\beta+\lambda)e_{i}\otimes e^{*}_{i}+e^{*}_{i}\otimes\beta(e_{i}).
\end{equation}
\end{cor}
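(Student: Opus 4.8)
The plan is to combine Proposition~\ref{pro:4.2} with Theorem~\ref{thm:quadratic to fact} and then unwind the resulting $2$-tensor into the explicit form \eqref{eq:cor:4.20}. Set $\widehat{A}:=A\ltimes_{\mathcal{L}^{*}_{A},-\mathcal{L}^{*}_{A}-\mathcal{R}^{*}_{A}}A^{*}$ and $\widehat{\beta}:=\beta-(\beta+\lambda\,\mathrm{id}_{A})^{*}$. By Proposition~\ref{pro:4.2}, $(\widehat{A},[-,-]_{\widehat{A}},\omega_{p},\widehat{\beta})$ is a skew-symmetric quadratic Rota-Baxter Leibniz algebra of weight $\lambda$, with $\omega_{p}$ given by \eqref{eq:quad2}. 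Since $\lambda\neq 0$, Theorem~\ref{thm:quadratic to fact} applies and produces a factorizable Leibniz bialgebra $(\widehat{A},[-,-]_{\widehat{A}},\Delta_{r})$ with $\Delta_{r}$ defined by \eqref{eq:cob}, where $r\in\widehat{A}\otimes\widehat{A}$ is determined by its operator form $T_{r}=\widehat{\beta}\circ(\omega_{p}^{\sharp})^{-1}:\widehat{A}^{*}\to\widehat{A}$ through \eqref{eq:thm:quadratic to fact}. Thus it only remains to identify this composite and rewrite it as a tensor.

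Next I would make the identification $\widehat{A}^{*}=A^{*}\oplus A$ explicit, where $a^{*}\in A^{*}$ pairs with the $A$-component of $\widehat{A}$ and $x\in A\cong A^{**}$ pairs with the $A^{*}$-component. A direct computation from \eqref{eq:quad2} gives $\omega_{p}^{\sharp}(x+a^{*})=-a^{*}+x$, hence $(\omega_{p}^{\sharp})^{-1}(c^{*}+w)=w-c^{*}$ for $c^{*}\in A^{*}$ and $w\in A$. Combining this with the action of $\widehat{\beta}$ on $\widehat{A}=A\oplus A^{*}$ (namely $\beta$ on the $A$-part and $-(\beta+\lambda\,\mathrm{id}_{A})^{*}$ on the $A^{*}$-part) yields
\[
T_{r}(e_{i}^{*})=(\beta+\lambda\,\mathrm{id}_{A})^{*}(e_{i}^{*})\in A^{*},\qquad T_{r}(e_{i})=\beta(e_{i})\in A,
\]
where on the left-hand sides $e_{i}^{*}$ is viewed in $A^{*}\subset\widehat{A}^{*}$ and $e_{i}$ in $A\cong A^{**}\subset\widehat{A}^{*}$.

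Finally, with respect to the basis $\{e_{1},\dots,e_{n},e_{1}^{*},\dots,e_{n}^{*}\}$ of $\widehat{A}$ the corresponding dual basis of $\widehat{A}^{*}=A^{*}\oplus A$ is $\{e_{1}^{*},\dots,e_{n}^{*},e_{1},\dots,e_{n}\}$, so that $r=\sum_{i}e_{i}\otimes T_{r}(e_{i}^{*})+\sum_{i}e_{i}^{*}\otimes T_{r}(e_{i})$. Substituting the formulas above and using the elementary identity $\sum_{i}e_{i}\otimes f^{*}(e_{i}^{*})=\sum_{i}f(e_{i})\otimes e_{i}^{*}$, valid for any $f\in\mathrm{End}(A)$ with $f^{*}$ the transpose, I obtain
\[
r=\sum_{i}(\beta+\lambda)(e_{i})\otimes e_{i}^{*}+\sum_{i}e_{i}^{*}\otimes\beta(e_{i}),
\]
which is exactly \eqref{eq:cor:4.20}; hence $(\widehat{A},[-,-]_{\widehat{A}},\Delta_{r})$ with this $r$ is a factorizable Leibniz bialgebra.

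The difficulties here are bookkeeping rather than conceptual: correctly identifying $\widehat{A}^{*}$ with $A^{*}\oplus A$ and its dual basis (in particular treating $e_{i}$ as an element of $A^{**}$), tracking the sign in $\omega_{p}^{\sharp}$, and using the transpose convention for $(\beta+\lambda\,\mathrm{id}_{A})^{*}$ consistently so that the passage from $T_{r}$ to the tensor $r$ produces precisely the coefficients in \eqref{eq:cor:4.20}. As a sanity check one may compute $r-\sigma(r)=\lambda\sum_{i}(e_{i}\otimes e_{i}^{*}-e_{i}^{*}\otimes e_{i})$, which as a map $\widehat{A}^{*}\to\widehat{A}$ is visibly nondegenerate exactly because $\lambda\neq0$, re-confirming factorizability directly.
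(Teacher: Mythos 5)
Your proposal is correct and follows essentially the same route as the paper: apply Proposition~\ref{pro:4.2} to get the skew-symmetric quadratic Rota-Baxter Leibniz algebra $(A\ltimes_{\mathcal{L}^{*}_{A},-\mathcal{L}^{*}_{A}-\mathcal{R}^{*}_{A}}A^{*},\omega_{p},\beta-(\beta+\lambda\mathrm{id}_{A})^{*})$, invoke Theorem~\ref{thm:quadratic to fact}, compute $T_{r}=\widehat{\beta}\circ(\omega_{p}^{\sharp})^{-1}$ explicitly, and convert to the tensor \eqref{eq:cor:4.20}. The only cosmetic difference is that you extract $r$ via the basis expansion $r=\sum_{i}e_{i}\otimes T_{r}(e_{i}^{*})+\sum_{i}e_{i}^{*}\otimes T_{r}(e_{i})$ while the paper evaluates the pairings $\langle r,e_{i}\otimes e_{j}^{*}\rangle$ and $\langle r,e_{i}^{*}\otimes e_{j}\rangle$ directly; these are the same computation, and your signs and transpose conventions all check out.
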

\begin{proof}
By Proposition \ref{pro:4.2},   $(A\ltimes_{\mathcal{L}^{*}_{A},-\mathcal{L}^{*}_{A}-\mathcal{R}^{*}_{A}} A^{*},\omega_{p},\beta-( \beta+\lambda\mathrm{id}_{A} )^{*})$ is a skew-symmetric quadratic Rota-Baxter Leibniz algebra of weight $\lambda$.
For all $x\in A, a^{*}\in A^{*}$, we have
\begin{equation*}
    \omega_{p}^{\sharp}(x+a^{*})=x-a^{*},
\end{equation*}
and following Eq.~\eqref{eq:thm:quadratic to fact}, we obtain a linear transformation $T_{r}$ on $A\oplus A^{*}$ by
\begin{equation*}
    T_{r}(x+a^{*})=(\beta-( \beta+\lambda\mathrm{id}_{A} )^{*})(\omega_{p}^{\sharp})^{-1}(x+a^{*})=\beta(x)+(\beta+\lambda\mathrm{id}_{A})^{*}a^{*}.
\end{equation*}
Then we have
\begin{eqnarray*}
&&\sum_{i,j}\langle r,e_{i}\otimes e^{*}_{j}\rangle=\sum_{i,j}\langle T_{r}(e_{i}),e^{*}_{j} \rangle=\sum_{i,j}\langle \beta(e_{i}),e^{*}_{j} \rangle,\\
&&\sum_{i,j}\langle r,e^{*}_{i}\otimes e_{j} \rangle=\sum_{i,j}\langle T_{r}(e^{*}_{i}),e_{j} \rangle=\sum_{i,j}\langle (\beta+\lambda\mathrm{id}_{A})^{*}e^{*}_{i},e_{j} \rangle=\sum_{i,j}\langle e^{*}_{i},(\beta+\lambda\mathrm{id}_{A})e_{j} \rangle.
\end{eqnarray*}
Hence Eq.~\eqref{eq:cor:4.20} holds, and by Theorem \ref{thm:quadratic to fact},
$(A\ltimes_{\mathcal{L}^{*}_{A},-\mathcal{L}^{*}_{A}-\mathcal{R}^{*}_{A}} A^{*},\Delta_{r})$ is a factorizable Leibniz bialgebra.
\end{proof}

\begin{ex}
    Let $(\mathfrak{g},\{-,-\}_{\mathfrak{g}})$ be the 2-dimensional non-abelian
    Lie algebra defined with respect to a basis $ \{e_{1},e_{2} \} $ by the following nonzero product:
    \begin{equation*}
    \{ e_{1},e_{2} \}_{\mathfrak{g}}=e_{1}.
    \end{equation*}
Then the Leibniz algebra $\mathfrak{g}\ltimes_{\mathrm{ad}^{*}_{\mathfrak{g}},0}\mathfrak{g}^{*}$ given by the hemisemidirect product $(\mathfrak{g},\{-,-\}_{\mathfrak{g}})$ with respect to $(\mathrm{ad}^{*}_{\mathfrak{g}},\mathfrak{g}^{*})$ is defined by the following nonzero products:
\begin{equation*}
    [e_{1},e_{2}]=e_{1},\; [e_{2},e_{1}]=-e_{1},\;[e_{1},e^{*}_{1}]=-e^{*}_{2},\; [e_{2},e^{*}_{1}]=e^{*}_{1}.
\end{equation*}
Then by Example \ref{ex:4.3}, $(\mathfrak{g}\ltimes_{\mathrm{ad}^{*}_{\mathfrak{g}},0}\mathfrak{g}^{*},\omega_{p},\mathrm{id}_{\mathfrak{g}})$ is a skew-symmetric quadratic Rota-Baxter Leibniz algebra of weight $-1$.
By Corollary \ref{cor:4.20},
there is a factorizable Leibniz bialgebra $(\mathfrak{g}\ltimes_{\mathrm{ad}^{*}_{\mathfrak{g}},0}\mathfrak{g}^{*},\Delta_{r})$,
where
\begin{equation*}
    r=e^{*}_{1}\otimes e_{1}+e^{*}_{2}\otimes e_{2}.
\end{equation*}
Note this Leibniz bialgebra is isomorphic to the one in Example \ref{ex:2.13} by identifying $e^{*}_{1}$ with $e_{3}$ and $e^{*}_{2}$ with $e_{4}$.
\end{ex}

The following result gives the converse of Theorem \ref{thm:quadratic to fact}.

\begin{thm}\label{thm:fact to quadratic}
Let $(A,[-,-]_{A},\Delta_{r})$ be a factorizable Leibniz
bialgebra. Let $\omega$ be {{the}} bilinear form on $A$ given by
\begin{equation}\label{eq:fact to quadratic}
    \omega(x,y)=-\lambda\langle T_{r-\sigma(r)}^{-1}(x),y\rangle,\;\lambda\neq 0,\;\forall x,y\in A,
\end{equation}
and $\beta$ be {{the}} linear map given by Eq.~\eqref{Br,Brt}.
Then $(A,[-,-]_{A},\omega,\beta)$ is a skew-symmetric quadratic
Rota-Baxter Leibniz  algebra of weight $\lambda$.
\end{thm}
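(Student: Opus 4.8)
The plan is to verify the three conditions defining a skew-symmetric quadratic Rota-Baxter Leibniz algebra of weight $\lambda$ for the triple $(A,[-,-]_A,\omega,\beta)$ in turn, essentially reversing the constructions used to go from $(A,[-,-]_A,\omega,\beta)$ to a factorizable Leibniz bialgebra. First I would record the basic dictionary: since $T_{r-\sigma(r)}:A^*\to A$ is a linear isomorphism, Eq.~\eqref{eq:fact to quadratic} defines a nondegenerate bilinear form with $\omega^\sharp=-\lambda\, T_{r-\sigma(r)}^{-1}$, equivalently $(\omega^\sharp)^{-1}=-\tfrac1\lambda T_{r-\sigma(r)}$, so that the $2$-tensor $\phi_\omega$ attached to $\omega$ via Eq.~\eqref{identify bf} equals $-\tfrac1\lambda(r-\sigma(r))$; in particular Eq.~\eqref{rw} holds with this $\lambda$.

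The first thing to check is that $(A,[-,-]_A,\omega)$ is a skew-symmetric quadratic Leibniz algebra. By Lemma~\ref{lem3.10} this amounts to showing $\phi_\omega$ is skew-symmetric and invariant. Skew-symmetry is immediate since $\sigma(r-\sigma(r))=-(r-\sigma(r))$. Invariance: $r$ satisfies Eq.~\eqref{s.i.}, i.e. $F(x)(r-\sigma(r))=0$ for all $x\in A$, which says precisely that $r-\sigma(r)$ is invariant; hence its scalar multiple $\phi_\omega$ is invariant. This gives the skew-symmetric quadratic Leibniz algebra structure.

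Next, that $\beta=T_r\omega^\sharp$ is a Rota-Baxter operator of weight $\lambda$ on $(A,[-,-]_A)$. Here I would invoke Proposition~\ref{pro1-6}: with $\omega$ now known to be skew-symmetric quadratic and $r$ satisfying Eq.~\eqref{s.i.}, the fact that $r$ is a solution of the CLYBE (part of being a factorizable, hence quasi-triangular, Leibniz bialgebra) is equivalent to Eq.~\eqref{eq:pro3,1} for $\beta$. But Eq.~\eqref{eq:pro3,1} reads
\begin{equation*}
[\beta(x),\beta(y)]_A=\beta\big([\beta(x),y]_A+[x,\beta(y)]_A-[x,T_{r-\sigma(r)}\omega^\sharp(y)]_A\big),
\end{equation*}
and since $\omega^\sharp=-\lambda T_{r-\sigma(r)}^{-1}$ we get $T_{r-\sigma(r)}\omega^\sharp(y)=-\lambda y$, so the last bracket becomes $-\lambda[x,y]_A$ and the right-hand side is $\beta([\beta(x),y]_A+[x,\beta(y)]_A+\lambda[x,y]_A)$. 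This is exactly Eq.~\eqref{eq:rbo}, so $\beta$ is a Rota-Baxter operator of weight $\lambda$. Finally, condition Eq.~\eqref{strong}, $\omega(\beta(x),y)+\omega(x,\beta(y))+\lambda\omega(x,y)=0$, follows from Lemma~\ref{lem:bf}: we have already observed Eq.~\eqref{rw} holds, and Lemma~\ref{lem:bf} asserts Eq.~\eqref{rw} is equivalent to Eq.~\eqref{eq:lem:bf2}, which is Eq.~\eqref{strong}. Assembling the three pieces gives the claim.

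The only genuine subtlety—and the place I would be most careful—is making sure the hypotheses of Proposition~\ref{pro1-6} and Lemma~\ref{lem:bf} are legitimately available before $\omega$ is known to be skew-symmetric quadratic: Lemma~\ref{lem:bf} is purely linear-algebraic (it only needs $\omega$ nondegenerate skew-symmetric) and so can be applied first to extract Eq.~\eqref{strong} and to feed Eq.~\eqref{rw} wherever needed; Lemma~\ref{lem3.10} then upgrades $\omega$ to a quadratic structure; and only afterward is Proposition~\ref{pro1-6} invoked, whose proof already uses that $\phi_\omega$ is skew-symmetric and invariant. Organizing the logical order as (i) linear-algebra identities from Eq.~\eqref{eq:fact to quadratic}, (ii) Lemma~\ref{lem3.10} for the quadratic structure, (iii) Proposition~\ref{pro1-6} for the Rota-Baxter identity, (iv) Lemma~\ref{lem:bf} for Eq.~\eqref{strong}, avoids any circularity, and no new computation beyond the substitution $T_{r-\sigma(r)}\omega^\sharp=-\lambda\,\mathrm{id}$ is required.
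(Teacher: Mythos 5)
Your proposal is correct and follows essentially the same route as the paper's proof: derive $\omega^{\sharp}=-\lambda T_{r-\sigma(r)}^{-1}$, obtain the skew-symmetric quadratic structure from Eq.~\eqref{s.i.} and nondegeneracy, substitute $T_{r-\sigma(r)}\omega^{\sharp}=-\lambda\,\mathrm{id}$ into Eq.~\eqref{eq:pro3,1} of Proposition~\ref{pro1-6} to get the weight-$\lambda$ Rota-Baxter identity, and invoke Lemma~\ref{lem:bf} via Eq.~\eqref{rw} for Eq.~\eqref{strong}. Your explicit appeal to Lemma~\ref{lem3.10} and the remarks on the logical ordering only make explicit what the paper leaves implicit.
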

\begin{proof}
    By Eq.~\eqref{identify bf} and Eq.~\eqref{eq:fact to quadratic}, we have $\omega^{\sharp}=-\lambda T_{r-\sigma(r)}^{-1}$, and thus $T_{r-\sigma(r)}=-\lambda (\omega^{\sharp})^{-1}$.
    Since $r$ satisfies Eq.~\eqref{s.i.} and $T_{r-\sigma(r)}$ is nondegenerate, it follows that $\omega$ given by Eq.~\eqref{eq:fact to quadratic} is a nondegenerate skew-symmetric invariant bilinear form on $(A,[-,-]_{A})$. Moreover, taking $T_{r-\sigma(r)}=-\lambda (\omega^{\sharp})^{-1}$ into Eq.~\eqref{eq:pro3,1} in Proposition \ref{pro1-6}, we see that $\beta$ is a Rota-Baxter operator of weight $\lambda$. Again from $T_{r-\sigma(r)}=-\lambda (\omega^{\sharp})^{-1}$, Eq.~\eqref{rw} holds, and by Lemma \ref{lem:bf}, Eq.~\eqref{eq:lem:bf2} holds. Thus $(A,[-,-]_{A},\omega,\beta)$ is a  skew-symmetric quadratic Rota-Baxter Leibniz  algebra of weight $\lambda$.
\end{proof}

Combining Theorems \ref{thm:quadratic to fact} and \ref{thm:fact to quadratic}, for a Leibniz algebra $(A,[-,-]_{A})$, there is a factorizable Leibniz bialgebra $(A,[-,-]_{A},\Delta_{r})$ if and only if there is a skew-symmetric quadratic  Rota-Baxter Leibniz algebra $(A,[-,-]_{A},\omega,\beta)$ of weight $\lambda\neq 0$. Here the mutual relation is given by
\begin{equation}
r-\sigma(r)=-\lambda\phi_{\omega}, \;T_{r}=\beta (\omega^{\sharp})^{-1}.
\end{equation}
Thus there is a one-to-one correspondence between factorizable Leibniz bialgebras and skew-symmetric quadratic  Rota-Baxter Leibniz algebras of nonzero weights.

\begin{pro}
Let  $(A,[-,-]_{A},\Delta_{r})$ be a factorizable Leibniz bialgebra which corresponds to a  skew-symmetric quadratic Rota-Baxter Leibniz algebra  $(A,[-,-]_{A},\omega,\beta)$ of weight $\lambda\neq 0$ via Theorems \ref{thm:quadratic to fact} and \ref{thm:fact to quadratic}. Then the factorizable Leibniz bialgebra   $(A$,
$[-,-]_{A}$,
$\Delta_{\sigma(r)})$ corresponds to the skew-symmetric quadratic Rota-Baxter Leibniz algebra  $(A$,
$[-,-]_{A}$,
$-\omega$,
$-(\lambda\mathrm{id}+\beta))$ of weight $\lambda$. In conclusion, we have the following commutative diagram.

\begin{equation*}
        \xymatrix@C=3cm@R=2.5cm{
            (A,[-,-]_{A},\Delta_{r}) \ar@{<->}[r]^-{ 
                {\rm Cor.}~\ref{cor:fact pair}}
            \ar@<-1ex>@{->}[d]_-{ 
                {\rm Thm.}~\ref{thm:fact to quadratic}}
            & (A,[-,-]_{A},\Delta_{\sigma(r)})
            \ar@<-1ex>@{->}[d]_-{{\rm Thm.}~\ref{thm:fact to quadratic}}\\
            (A,[-,-]_{A},\omega,\beta)
            \ar@{<->}[r]^-{{\rm Prop.}~\ref{mir}}
            \ar@<-1ex>[u]_-{{\rm Thm.}~\ref{thm:quadratic to fact}}
            &
            (A,[-,-]_{A},-\omega,-(\lambda\mathrm{id}+\beta))
            \ar@<-1ex>[u]_-{{\rm Thm.}~\ref{thm:quadratic to fact}}
        }
\end{equation*}
\end{pro}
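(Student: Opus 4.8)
The plan is to unwind the two assignments---from a factorizable Leibniz bialgebra to a skew-symmetric quadratic Rota-Baxter Leibniz algebra and back---on $r$ and on $\sigma(r)$, and then compare the two composites around the square. First I would record what Theorem~\ref{thm:fact to quadratic}, applied to $(A,[-,-]_{A},\Delta_{r})$ with weight $\lambda$, produces: the bilinear form $\omega$ of Eq.~\eqref{eq:fact to quadratic} satisfies $\omega^{\sharp}=-\lambda\,T_{r-\sigma(r)}^{-1}$, equivalently $T_{r-\sigma(r)}\,\omega^{\sharp}=-\lambda\,\mathrm{id}$, while $\beta=T_{r}\omega^{\sharp}$ by Eq.~\eqref{Br,Brt}. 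Since $\sigma$ is an involution, $\sigma(\sigma(r))=r$, hence $T_{\sigma(r)-\sigma(\sigma(r))}=T_{\sigma(r)-r}=-T_{r-\sigma(r)}$ is again a linear isomorphism, in agreement with Corollary~\ref{cor:fact pair}, which already guarantees that $(A,[-,-]_{A},\Delta_{\sigma(r)})$ is factorizable.

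Next I would apply Theorem~\ref{thm:fact to quadratic} to $(A,[-,-]_{A},\Delta_{\sigma(r)})$ with the \emph{same} weight $\lambda$. The bilinear form it produces is
\[
\omega'(x,y)=-\lambda\langle T_{\sigma(r)-\sigma(\sigma(r))}^{-1}(x),y\rangle=-\lambda\langle\,(-T_{r-\sigma(r)})^{-1}(x),y\rangle=\lambda\langle T_{r-\sigma(r)}^{-1}(x),y\rangle=-\omega(x,y),
\]
so $\omega'=-\omega$ and $(\omega')^{\sharp}=-\omega^{\sharp}$; hence the accompanying linear map is $\beta'=T_{\sigma(r)}(\omega')^{\sharp}=-T_{\sigma(r)}\omega^{\sharp}$ by Eq.~\eqref{Br,Brt}. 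To finish the first assertion I would identify $\beta'$: writing $T_{\sigma(r)}=T_{r}-T_{r-\sigma(r)}$ and using $T_{r-\sigma(r)}\omega^{\sharp}=-\lambda\,\mathrm{id}$ together with $\beta=T_{r}\omega^{\sharp}$,
\[
\beta'=-T_{\sigma(r)}\omega^{\sharp}=-(T_{r}-T_{r-\sigma(r)})\omega^{\sharp}=-T_{r}\omega^{\sharp}+T_{r-\sigma(r)}\omega^{\sharp}=-\beta-\lambda\,\mathrm{id}=-(\lambda\,\mathrm{id}+\beta).
\]
Thus $(A,[-,-]_{A},\Delta_{\sigma(r)})$ corresponds, via Theorem~\ref{thm:fact to quadratic}, to the skew-symmetric quadratic Rota-Baxter Leibniz algebra $(A,[-,-]_{A},-\omega,-(\lambda\,\mathrm{id}+\beta))$ of weight $\lambda$, which is precisely the stated correspondence.

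For the commutative diagram I would then observe that its horizontal arrows are the bijections of Corollary~\ref{cor:fact pair} (top, $\Delta_{r}\leftrightarrow\Delta_{\sigma(r)}$) and Proposition~\ref{mir} (bottom, $(\omega,\beta)\leftrightarrow(-\omega,-(\lambda\,\mathrm{id}+\beta))$), and that the two vertical arrows in each column are the mutually inverse assignments of Theorems~\ref{thm:quadratic to fact} and~\ref{thm:fact to quadratic} from the one-to-one correspondence recorded just above the proposition. The computation of the previous paragraph says exactly that going down then right (Theorem~\ref{thm:fact to quadratic} followed by Proposition~\ref{mir}) equals going right then down (Corollary~\ref{cor:fact pair} followed by Theorem~\ref{thm:fact to quadratic}) on the top-left object; since all vertical maps are invertible, the square commutes, and so does the square obtained by reversing the vertical arrows. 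The only step demanding genuine attention is the sign bookkeeping---namely $T_{\sigma(r)-\sigma(\sigma(r))}=-T_{r-\sigma(r)}$ and the passage between $\omega^{\sharp}$ and $T_{r-\sigma(r)}^{-1}$; everything else is a direct substitution into the statements of Theorems~\ref{thm:quadratic to fact} and~\ref{thm:fact to quadratic}, Proposition~\ref{mir} and Corollary~\ref{cor:fact pair}.
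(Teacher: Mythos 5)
Your proposal is correct and follows essentially the same route as the paper: apply Theorem \ref{thm:fact to quadratic} to $(A,[-,-]_{A},\Delta_{\sigma(r)})$, compute $\omega'=-\omega$ from $T_{\sigma(r)-r}=-T_{r-\sigma(r)}$, and then identify $\beta'=-T_{\sigma(r)}\omega^{\sharp}=-(\lambda\mathrm{id}+\beta)$ via $T_{\sigma(r)}=T_{r}-T_{r-\sigma(r)}$ and $T_{r-\sigma(r)}\omega^{\sharp}=-\lambda\,\mathrm{id}$, which is exactly the paper's computation. The concluding remarks on the commutativity of the diagram match the paper's appeal to Corollary \ref{cor:fact pair}, Proposition \ref{mir} and the two theorems.
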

\begin{proof}
    By Theorem \ref{thm:fact to quadratic},  $(A,[-,-]_{A},\Delta_{\sigma(r)})$ gives rise to a skew-symmetric quadratic Rota-Baxter Leibniz algebra $(A,[-,-]_{A},\omega',\beta')$ of weight $\lambda$, where
    \begin{equation}\label{omega'}
        \omega'(x,y)\overset{\eqref{eq:fact to quadratic}}{=}-\lambda\langle T_{\sigma(r)-r}^{-1}(x),y\rangle=-\omega(x,y),
    \end{equation}
    and
    \begin{eqnarray*}
        \beta'(x)&\overset{\eqref{Br,Brt}}{=}&
        T_{\sigma(r)}\omega'^{\sharp}(x)
        \overset{\eqref{omega'}}{=}-T_{\sigma(r)}\omega^{\sharp}(x)\\
        &=&\lambda T_{\sigma(r)}T_{r-\sigma(r)}^{-1}(x)=\lambda (T_{r}-T_{r-\sigma(r)})T_{r-\sigma(r)}^{-1}(x)=-\lambda x+\lambda T_{r}T_{r-\sigma(r)}^{-1}(x)\\
        &=&-\lambda x-  T_{r}\omega^{\sharp}(x)\overset{\eqref{Br,Brt}}{=}-(\lambda\mathrm{id}+\beta)x,
    \end{eqnarray*}
for all $x,y\in A$. Hence
\begin{equation*}
(A,[-,-]_{A},\omega',\beta')=(A,[-,-]_{A},-\omega,-(\lambda\mathrm{id}+\beta)).
\end{equation*} Similarly, we obtain the converse side that the skew-symmetric quadratic Rota-Baxter Leibniz algebra $(A,[-,-]_{A},-\omega,-(\lambda\mathrm{id}+\beta))$
of weight $\lambda$ gives rise to the factorizable Leibniz bialgebra $(A,[-,-]_{A},\Delta_{\sigma(r)})$ via Theorem \ref{thm:quadratic to fact}.
\end{proof}

\noindent{\bf Acknowledgements.} This work is supported by
NSFC (11922110, 11931009, 12371029, 12271265, 12261131498, 12326319), 
the Fundamental Research Funds for the Central
Universities and Nankai Zhide Foundation. This work was completed in part while R. Tang was visiting the Key Laboratory of Mathematics and Its Applications of Peking University and he wishes to thank this laboratory for excellent working conditions.  R. Tang also thanks Professor Xiaomeng Xu for the warm hospitality to him during his stay at Peking University and for valuable discussions.
The authors thank the referee for valuable suggestions to improve the paper.

\end{document}